\theoremstyle{plain}
\newcommand{\refnewtheoremn}[4]{
\newaliascnt{#1}{#2}
\newtheorem{#1}[#1]{#3}
\aliascntresetthe{#1}
\expandafter\providecommand\csname #1autorefname\endcsname{#4}}
\newcommand{\refnewtheorem}[3]{\refnewtheoremn{#1}{#2}{#3}{#3}}
\def\makeCal#1{
\expandafter\newcommand\csname c#1\endcsname{\mathcal{#1}}}
\def\makeBB#1{
\expandafter\newcommand\csname b#1\endcsname{\mathbb{#1}}}
\def\makeFrak#1{
\expandafter\newcommand\csname f#1\endcsname{\mathfrak{#1}}}
\edef\y{\@Alph\count@}
\newtheorem{thm}{Theorem}[section]
\theoremstyle{definition}
\newcommand{\fg}{\mathfrak{g}}
\newcommand{\hS}{\Hat{S}}
\renewcommand{\Im}{\operatorname{Im}}
\renewcommand{\Re}{\operatorname{Re}}
\newcommand {\Hom}{\operatorname{Hom}}
\newcommand {\Aut}{\operatorname{Aut}}
\newcommand {\Ext}{\operatorname{Ext}}
\newcommand{\DT}{\operatorname{DT}}
\newcommand{\ch}{\operatorname{ch}}
\newcommand{\Coh}{\operatorname{Coh}}
\newcommand{\GW}{\operatorname{GW}}
\newcommand{\Quad}{\operatorname{Quad}}
\newcommand{\GL}{\operatorname{GL}}
\newcommand{\SU}{\operatorname{SU}}
\renewcommand{\dim}{\operatorname{dim}}
\newcommand{\height}{{H}}
\newcommand{\Li}{\operatorname{Li}}
\newcommand{\GV}{\operatorname{GV}}
\newcommand{\td}{\operatorname{td}}
\newcommand{\interior}{\operatorname{int}}
\newcommand{\into}{\hookrightarrow}
\newcommand {\<}{\langle}
\renewcommand {\>}{\rangle}
\newcommand{\isom}{\cong}
\newcommand{\half}{\frac{1}{2}}
\newcommand{\tensor}{\otimes}
\newcommand{\D}{D}
\newcommand{\dual}{\vee}
\renewcommand{\Phi}{X}
\renewcommand{\Psi}{Y}
\DeclareMathAlphabet{\mathcalligra}{T1}{calligra}{m}{n}
\DeclareFontShape{T1}{calligra}{m}{n}{<->s*[2.2]callig15}{}
\newcommand{\rr}{\scriptr}
\renewcommand{\rr}{{r}}
\title{Riemann-Hilbert problems from Donaldson-Thomas theory}
\author{Tom Bridgeland}
\date{}
\begin{document}

\begin{abstract}{We study a class of Riemann-Hilbert problems arising naturally in Donaldson-Thomas theory.   In certain special cases we show that these problems have unique solutions which can be written explicitly as  products of gamma functions. We briefly explain connections with Gromov-Witten theory and exact WKB analysis.}
\end{abstract}

\maketitle

% \nocite{*}

%%%%%%%%%%%%%%%%%%%%%%%%%%%%%%%%%%%%%%%%%%%%%%%%%
%%%%%%%%%%%%%%%%%%%%%%%%%%%%%%%%%%%%%%%%%%%%%%%%%
%%%%%%%%%%%%%%%%%%%%%%%%%%%%%%%%%%%%%%%%%%%%%%%%%
%%%%%%%%%%%%%%%%%%%%%%%%%%%%%%%%%%%%%%%%%%%%%%%%%

\section{Introduction}

In this paper we study a class of Riemann-Hilbert problems  arising naturally in Donaldson-Thomas theory. They involve maps from the complex plane to an algebraic torus, with prescribed discontinuities along a  collection of rays, and are closely related to the Riemann-Hilbert problems considered by Gaiotto, Moore and Neitzke \cite{GMN1}; in physical terms we are considering the conformal limit of their story. The same problems have also been considered by Stoppa and his collaborators \cite{BSt,FGS}.
One of our main results is  that in the `uncoupled' case the Riemann-Hilbert problem has a unique solution which can be written explicitly using products of  gamma functions (Theorem \ref{one}). The inspiration for this  comes from a calculation of Gaiotto \cite{G}.

 We begin by introducing the notion of a BPS structure. This is   a special case of Kontsevich and Soibelman's notion of a  stability structure \cite{KS1}. In mathematical terms, it describes the output of unrefined Donaldson-Thomas theory  applied to a three-dimensional Calabi-Yau category with a stability condition. There is also a  notion of a variation of BPS structures over a complex manifold, which axiomatises the  behaviour of  Donaldson-Thomas invariants under  changes of stability: the main ingredient  is the Kontsevich--Soibelman wall-crossing formula. % \cite{GMN1,GMN2}.

To any BPS structure satisfying a natural growth condition we associate a Riemann-Hilbert problem. We go to some pains to set this up precisely. We then prove the existence of a unique solution in the  uncoupled case referred to above. 
Given a variation of BPS structures over a complex manifold $M$, and a family of solutions to the corresponding Riemann-Hilbert problems, we can define a piecewise holomorphic function on $M$ which we call the $\tau$-function. In the uncoupled case we give an explicit expression for this function using the Barnes $G$-function (Theorem \ref{doss}).

Variations of BPS  structures  also arise in  theoretical physics in the study of quantum field theories with $N=2$ supersymmetry (see for example \cite{GMN1}). Our $\tau$-function  then seems to be closely related to the partition function of the theory. Thus, as a rough slogan, one can think of the BPS invariants as encoding the Stokes phenomena which arise when Borel resumming the genus expansion of the  free energy. As an example of this relationship,  we compute in Section 8 the asymptotic expansion of $\log(\tau)$ for the variation of BPS structures arising in  topological string theory,  and show that it reproduces the genus 0 part of the Gopakumar-Vafa expression for the Gromov-Witten generating function.  

Another interesting class of BPS structures arise in theoretical physics from supersymmetric gauge theories of class $S$. In the case of gauge group $\SU(2)$ these theories play a central r{o}le in  the paper of Gaiotto, Moore and Neitzke \cite{GMN2}. The corresponding BPS structures depend on a Riemann surface equipped with a meromorphic quadratic differential, and  the  BPS invariants encode counts of finite-length geodesics. 
These structures arise mathematically via the stability conditions studied by the author and Smith \cite{BS}. The work of Iwaki and Nakanishi \cite{IN} shows that the corresponding Riemann-Hilbert problems can be partially solved using the techniques of exact WKB analysis. We expect our $\tau$-function in this case to be closely related to the one computed by topological recursion \cite{ey}.

The theory we attempt to develop here is  purely mathematical. One potential advantage of our approach is its generality: the only input for the theory is a triangulated category satisfying the three-dimensional Calabi-Yau condition. When everything works, the output is   a complex manifold - the space of stability conditions - equipped with an interesting piecewise holomorphic function: the $\tau$-function. Note that the theory is inherently global and non-perturbative: it does not use expansions about some chosen limit point in the space of stability conditions. 

We should admit straight away  that at present there are many unanswered questions and unsolved technical problems with the theory. In  particular, for general BPS structures we have no  existence or uniqueness results for solutions to the Riemann-Hilbert problem. It is also not clear why the $\tau$-function as defined here should exist in the general uncoupled case. Nonetheless, the strong analogy with Stokes structures in the theory of differential equations, and the non-trivial answers obtained here  (see also \cite{tocome}) provide adequate mathematical motivation to study these problems further.

\subsection*{Plan of the paper}

In Section 2 we introduce basic definitions concerning BPS structures. Section 3  contains a summary of the contents of the paper with  technical details deferred to later sections. 
 In Section 4 we discuss the Riemann-Hilbert problem associated to a BPS structure.  In Section 5 we solve this problem    in the uncoupled case using elementary properties of the gamma function. Sections 6 and 7 discuss 
 the connections with Gromov-Witten invariants and exact WKB analysis referred to above. In Appendix A  we give a rigorous definition of a variation of BPS structures following Kontsevich and Soibelman. Appendix B contains some simple analytic results involving partially-defined self-maps of algebraic tori.%; we hope to return to these last two topics in future papers.

 \subsection*{Acknowledgements}
 The author is very grateful to  Kohei Iwaki, Andy Neitzke, Ivan Smith and Bal{\'a}zs Szendr{\H o}i  for extremely helpful discussions during the long gestation period of this paper. This work was partially supported by an ERC Advanced grant. The calculations in Section \ref{gw} were carried out jointly with Kohei Iwaki during a visit to the (appropriately named)  Bernoulli Centre in  Lausanne. I would also like to thank Sven Meinhardt for a careful reading of the preprint version, and for several useful comments.  
%%%%%%%%%%%%%%%%%%%%%%%%%%%%%%%%%%%%%%%%%%%%%%%%%
%%%%%%%%%%%%%%%%%%%%%%%%%%%%%%%%%%%%%%%%%%%%%%%%%
%%%%%%%%%%%%%%%%%%%%%%%%%%%%%%%%%%%%%%%%%%%%%%%%%
%%%%%%%%%%%%%%%%%%%%%%%%%%%%%%%%%%%%%%%%%%%%%%%%%

\section{BPS structures: initial definitions}

In this section we introduce the abstract notion  of a BPS structure and explain the corresponding picture of active rays and  BPS automorphisms. In mathematics, these structures arise naturally as the output of generalized Donaldson-Thomas theory applied to a  three-dimensional Calabi-Yau triangulated category  with a stability condition. These ideas go back to Kontsevich and Soibelman \cite[Section 2]{KS1}, building on work of Joyce (see \cite{Br4} for a gentle review). The same structures also arise in theoretical physics in the study of  quantum field  theories with $N=2$ supersymmetry \cite[Section 1]{GMN2}.

\subsection{Definition and terminology}

The following definition  is a special case of the notion of stability data on a graded Lie algebra \cite[Section 2.1]{KS1}. It was also studied by Stoppa and his collaborators \cite[Section 3]{BSt}, \cite[Section 2]{FGS}.

\begin{defn}
A BPS structure  consists of \begin{itemize}
\item[(a)] A finite-rank free abelian group $\Gamma\isom \bZ^{\oplus n}$, equipped with a skew-symmetric form \[\<-,-\>\colon \Gamma \times \Gamma \to \bZ,\]

\item[(b)] A homomorphism of abelian groups
$Z\colon \Gamma\to \bC$,\smallskip

\item[(c)] A map of sets
$\Omega\colon \Gamma\to \bQ,$
\end{itemize}

satisfying the following properties:

\begin{itemize}
\item[(i)] Symmetry: $\Omega(-\gamma)=\Omega(\gamma)$ for all $\gamma\in \Gamma$,\smallskip
\item[(ii)] Support property: fixing a norm  $\|\cdot\|$ on the finite-dimensional vector space $\Gamma\tensor_\bZ \bR$, there is a constant $C>0$ such that 
\begin{equation}
\label{support}\Omega(\gamma)\neq 0 \implies |Z(\gamma)|> C\cdot \|\gamma\|.\end{equation}
\end{itemize}
\end{defn}

The lattice $\Gamma$ will be  called the charge lattice, and the form $\<-,-\>$ is the intersection form. The group homomorphism $Z$ is  called the central charge. The rational numbers $\Omega(\gamma)$ are called BPS invariants. A class $\gamma\in \Gamma$ will be called active if $\Omega(\gamma)\neq 0$.

\subsection{Donaldson-Thomas invariants}

The Donaldson-Thomas (DT) invariants  of a BPS structure $(\Gamma,Z,\Omega)$ are defined by the expression
\begin{equation}
\label{bps}\DT(\gamma)=\sum_{ \gamma=m\alpha}   \frac{1}{m^2} \, \Omega(\alpha)\in \bQ,\end{equation}
where the sum is over integers $m>0$ such that $\gamma$ is divisble by $m$ in the lattice $\Gamma$.
The BPS  and DT invariants are equivalent data: we can write
\begin{equation}\Omega(\gamma)=\sum_{ \gamma=m\alpha}   \frac{\mu(m)}{m^2}  \DT(\alpha),\end{equation}
where $\mu(m)$ is the M{\"o}bius function. One reason to prefer the BPS invariants is that in many examples they are known, or conjectured, to be integers. Note however that this depends on  a genericity assumption  (see Definition \ref{gob}),  without which  integrality fails even in very simple examples (see Section \ref{newnew}).  

\subsection{Poisson algebraic torus}

Given a lattice $\Gamma\isom \bZ^{\oplus n}$ equipped with a skew-symmetric form $\<-,-\>$ as above, we  consider 
the algebraic torus
\[\bT_+=\Hom_\bZ(\Gamma,\bC^*)\isom (\bC^*)^n,\]
and its co-ordinate ring (which is also the group ring of the lattice $\Gamma$)
\[\bC[\bT_+]=\bC[\Gamma]\isom \bC[y_1^{\pm 1}, \cdots, y_n^{\pm n}].\]
We write $y_\gamma\in \bC[\bT_+]$ for the  character of $\bT_+$ corresponding to an element $\gamma\in \Gamma$. The skew-symmetric form $\<-,-\>$ induces an invariant Poisson structure on $\bT_+$, given on characters by
\begin{equation}
\label{poisson}\{y_{\alpha}, y_{\beta}\}= \<\alpha,\beta\>\cdot y_{\alpha}\cdot y_{\beta}.\end{equation}
As well as the torus $\bT_+$, it will also be important for us to consider an associated torsor
\[\bT_-= \{g\colon \Gamma \to \bC^*: g(\gamma_1+\gamma_2)=(-1)^{\<\gamma_1,\gamma_2\>} g(\gamma_1)\cdot g(\gamma_2)\},\]
which we call the twisted torus. This space is discussed in more detail in the next subsection, but since the difference between  $\bT_+$ and $\bT_-$  just has the effect of introducing signs into various formulae, it  can safely be ignored at first reading.

\subsection{Twisted torus}
Let us again fix a lattice $\Gamma\isom \bZ^{\oplus n}$ equipped with a skew-symmetric form $\<-,-\>$.
The torus $\bT_+$ acts freely and transitively on the twisted torus $\bT_-$  via
\[(f\cdot g)(\gamma)=f(\gamma)\cdot g(\gamma)\in \bC^*, \quad f\in \bT_+, \quad  g\in \bT_-.\]
Choosing a base-point $g_0\in \bT_-$ therefore gives a bijection \begin{equation}
\label{bloxy}\theta_{g_0}\colon \bT_+\to \bT_-, \qquad f\mapsto f\cdot g_0.\end{equation}
We can use the identification $\theta_{g_0}$ to give $\bT_-$ the structure of an algebraic variety. The result is independent of the choice of base-point $g_0\in \bT_-$, since the translation maps on $\bT_+$ are algebraic.  %Any such identification gives $\bT$ the structure of an algebraic variety.
Similarly, the Poisson structure \eqref{poisson} on $\bT_+$  is invariant under translations, and hence can be transferred to $\bT_-$ via the map \eqref{bloxy}.% the result is independent of the choice of base-point. 

The co-ordinate ring of $\bT_-$ is spanned as a vector space by the functions
\[x_\gamma\colon \bT_-\to \bC^*, \qquad x_\gamma(g)=g(\gamma)\in \bC^*,\]
which we  refer to as twisted characters.  Thus
\begin{equation}\bC[\bT_-]=\bigoplus_{\gamma\in \Gamma} \bC\cdot x_\gamma, \qquad x_{\gamma_1}\cdot x_{\gamma_2}=(-1)^{\<\gamma_1,\gamma_2\>}\cdot x_{\gamma_1+\gamma_2}.\end{equation}
%\vspace{-.8em}
The Poisson bracket on $\bC[\bT_-]$ is given on twisted characters by %the same formula
 \begin{equation}
\label{poisson2}\{x_{\alpha}, x_{\beta}\}= \<\alpha,\beta\>\cdot x_{\alpha}\cdot x_{\beta}.\end{equation}
From now on we shall denote the twisted torus $\bT_-$ simply by $\bT$.

\subsection{Ray diagram}
\label{raydiagram}
Let $(\Gamma,Z,\Omega)$ be a BPS structure.  The support property implies that in any bounded region of $\bC$ there are only finitely many points of the form $Z(\gamma)$ with $\gamma\in \Gamma$ an active class. It also implies that all such points are nonzero.
\begin{figure}
\begin{tikzpicture}[scale=0.4]
%\draw (0,0)--(4.9,0);
%\draw[dotted] (5.1,0)--(10,0);
\draw (5,0) circle [radius=0.1];
\draw[->] (5.1,0.1) -- (8,2);
\draw[->] (4.9,0.1) -- (3,2);
\draw[->](5,0.1)--(5,4);
\draw[->] (10-5.1,-0.1) -- (10-8,-2);
\draw[->] (10-4.9,-0.1) -- (10-3,-2);
\draw[->](10-5,-0.1)--(10-5,-4);
\draw (9.4,2) node { $\scriptstyle Z(\gamma_1)$};
\draw (1.75,2) node {$\scriptstyle Z(\gamma_2)$};
\draw (3.8,4) node {$\scriptstyle Z(\gamma_3)$};
\draw (10-9.4,-2) node { $\scriptstyle -Z(\gamma_1)$};
\draw (10-1.75,-2) node {$\scriptstyle -Z(\gamma_2)$};
\draw (10-3.8,-4) node {$\scriptstyle -Z(\gamma_3)$};
%\draw (11,-4) node {$\bC$};
\draw[fill] (8.05,2.05) circle [radius=0.05];
\draw[fill] (2.95,2.05) circle [radius=0.05];
\draw[fill] (4.95,4.05) circle [radius=0.05];
\draw[fill] (10-8.05,-2.05) circle [radius=0.05];
\draw[fill] (10-2.95,-2.05) circle [radius=0.05];
\draw[fill] (10-4.95,-4.05) circle [radius=0.05];
\end{tikzpicture}
\caption{The ray diagram associated to a BPS structure.
\label{fig}}
\end{figure}
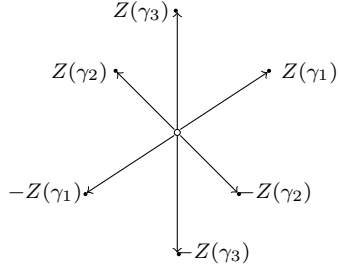
%\vspace{-.2cm}

By a ray in $\bC^*$ we mean a subset of the form
$\ell=\bR_{>0}\cdot z$ for some $z\in \bC^*$.
Such a ray  will be called active if it contains a point $Z(\gamma)$ for some active class $\gamma\in \Gamma$.  Taken together the active rays form a  picture as in Figure \ref{fig}, which we call the ray diagram of the BPS structure. In general there will be countably many active rays.
We define the height of an active ray $\ell\subset \bC^*$ to be \[\height(\ell)=\inf\big\{|Z(\gamma)|: \gamma \in \Gamma\text{ such that } Z(\gamma)\in \ell\text{ and } \Omega(\gamma)\neq 0\big\}.\]Non-active rays are considered to have infinite height. The support property  ensures that for any $H>0$ there are only finitely many rays of height $<H$.

Associated to any ray $\ell\subset \bC^*$ is a formal sum of twisted characters
\begin{equation}
\label{hamiltonian}\DT(\ell)=-\hspace{-.8em}\sum_{\gamma\in \Gamma: Z(\gamma)\in \ell} \DT(\gamma) \cdot x_\gamma.\end{equation}
%This sum is nonzero precisely if the ray $\ell$ is active.
Naively, we would like to view  $\DT(\ell)$  as a well-defined holomorphic function on the twisted torus $\bT$, and consider the associated time 1 Hamiltonian flow as a Poisson automorphism
\[\bS(\ell)\in \Aut(\bT).\]
We refer to this as the BPS automorphism associated to the ray $\ell$;
making good sense of it is one of the main technical problems we shall need to deal with.

\subsection{Further terminology}
\label{defns}

In this section we gather some terminology for describing BPS structures of various special kinds.

\begin{defn}
\label{la}
We say that a BPS structure $(Z,\Gamma,\Omega)$ is
\begin{itemize}
\item[(a)] finite, if there are only finitely many active classes $\gamma\in \Gamma$;

\item[(b)] ray-finite, if for any ray $\ell\subset \bC^*$ there are only finitely many active classes $\gamma\in \Gamma$ for which $Z(\gamma)\in \ell$;

\item[(c)] convergent, if for some $R>0$
\begin{equation}\label{concon}\big.\sum_{\gamma\in \Gamma} |\Omega(\gamma)|\cdot e^{-R|Z(\gamma)|}<\infty.\end{equation}
\end{itemize}
\end{defn}

An equivalent condition to \eqref{concon} already appears in the work of Gaiotto, Moore and Neitzke \cite[Appendix C]{GMN1}. The same condition also plays a prominent role in the work of Barbieri and Stoppa \cite[Definition 3.5]{BSt}.

\begin{defn}
\label{gob}
We say that a BPS structure $(Z,\Gamma,\Omega)$ is
\begin{itemize}
\item[(a)] {uncoupled}, if  for any two active classes $\gamma_1,\gamma_2\in \Gamma$  one has
 $\<\gamma_1,\gamma_2\>=0$;

\item[(b)] generic, if for any two active classes $\gamma_1,\gamma_2\in \Gamma$ one has 
\[\bR_{>0} \cdot Z(\gamma_1)=\bR_{>0} \cdot Z(\gamma_2) \implies \< \gamma_1, \gamma_2\>=0.\]
\item[(c)] integral, if the BPS invariants $\Omega(\gamma)\in \bZ$ are all integers.
\end{itemize}
\end{defn}

The  uncoupled condition ensures that the Hamiltonian flows for any pair of functions on $\bT$ of the form $\DT(\gamma)\cdot x_\gamma$ commute. This situation corresponds to the case of  `mutually local corrections'  in \cite{GMN1}.  Genericity is the weaker condition that all such flows for which $Z(\gamma)$ lies on a given fixed ray $\ell\subset \bC^*$ should commute.

\subsection{BPS automorphisms}
\label{really}

As mentioned above, the main technical problem we have to deal with is making suitable definitions of the BPS automorphisms $\bS(\ell)$ associated to a BPS structure. Since we will use three different approaches at various points in the paper, it is perhaps worth briefly summarising  these here.

\begin{itemize}
\item[(i)] {\it Formal approach.}  If we are only interested in the elements $\bS(\ell)$ for rays $\ell\subset \bC^*$ lying in a fixed  acute sector $\Delta\subset \bC^*$, then we can work with a variant of the algebra $\bC[\bT]$ consisting of formal sums of the form \[\big.\sum_{Z(\gamma)\in \Delta} a_\gamma\cdot x_\gamma,\quad a_\gamma\in \bC,\] such that for any $H>0$ there are only finitely many terms with $|Z(\gamma)|<H$. This is the approach we shall use in Appendix \ref{two} to define variations of BPS structures: it has the advantage of not requiring any extra assumptions.\smallskip

\item[(ii)] {\it Analytic approach.} In Appendix \ref{converge},   we  associate to each convex sector $\Delta\subset \bC^*$, and each real number $R>0$, a non-empty analytic open subset $U_\Delta(R)\subset \bT$ defined to be the interior of the subset 
\[\big\{g\in \bT: Z(\gamma)\in \Delta \text{ and } \Omega(\gamma)\neq 0\implies |g(\gamma)|<\exp(-R\|\gamma\|)\big\}\subset \bT.\]
 We then show that if the BPS structure is convergent, and $R>0$ is sufficiently large, then  for any  active ray $\ell\subset \Delta$, the formal series $\DT(\ell)$ is absolutely convergent on   $U_\Delta(R)\subset \bT$, and that the time 1 Hamiltonian flow of the resulting  function defines a holomorphic  embedding \[\bS(\ell) \colon U_\Delta(R)\rightarrow \bT.\]
 We can then view this map as being a partially-defined automorphism of $\bT$. For a more precise statement see Proposition \ref{propy}.
\smallskip

\item[(iii)] {\it Birational approach.} In  the case of a generic, integral and ray-finite  BPS structure, the partially-defined automorphisms $\bS(\ell)$  discussed in (ii)  extend to birational automorphisms of $\bT$; see Propostion \ref{barneyy}. The induced pullback  of twisted characters is expressed by the formula
\begin{equation}\bS(\ell)^*(x_\beta)=x_\beta\cdot \prod_{Z(\gamma)\in \ell}(1-x_\gamma)^{\,\Omega(\gamma)\<\gamma,\beta\>}\end{equation}
which is often taken as a definition (see e.g. \cite[Section 2.2]{GMN1}). 
\end{itemize}

\subsection{Doubling construction}
\label{double}

It is often useful to be able to assume that the form $\<-,-\>$ is non-degenerate. To reduce to this case we can use the  following doubling construction \cite[Section 2.6]{KS1}.
Suppose given 
a BPS structure $(\Gamma, Z, \Omega)$. The doubled BPS structure takes the form
\[(\Gamma\oplus \Gamma^\dual, Z, \Omega),\]
where $\Gamma^\dual=\Hom_\bZ(\Gamma,\bZ)$ is the dual lattice. We equip the doubled lattice $\Gamma_D=\Gamma\oplus \Gamma^\dual$ with the non-degenerate skew-symmetric form
\begin{equation}
\label{job}\big\<(\gamma_1,\lambda_1), (\gamma_2,\lambda_2)\big\>=\<\gamma_1,\gamma_2\>+\lambda_1(\gamma_2)-\lambda_2(\gamma_1).\end{equation}
The central charge is defined by
$Z(\gamma,\lambda)=Z(\gamma),$
and the BPS invariants by
\[\Omega(\gamma,\lambda)=\begin{cases} \Omega(\gamma) &\text{ if } \lambda= 0, \\ 0&\text{ otherwise}.\end{cases}\]
The support property reduces to that for the original structure.
Slightly more generally, we can consider BPS structures of the form $(\Gamma\oplus \Gamma^\dual, Z\oplus Z^\vee, \Omega)$ where $Z^\vee\colon \Gamma^\vee\to \bC$ is an arbitrary group homomorphism.

\subsection{A basic example: the Kronecker quiver}
\label{newnew}

Interesting examples of BPS structures can be obtained from  Donaldson-Thomas theory.  In the case of the  generalised Kronecker quiver with $k>0$ arrows (see also \cite{GP}) these  BPS structures $(\Gamma,Z,\Omega)$ have \[\Gamma=\bZ e_1\oplus \bZ e_2, \qquad \<e_1,e_2\>=-k,\] and  are specified by a central charge $Z\colon \Gamma\to \bC$ satisfying $\Im Z(e_i)>0$ for $i=1,2$. The  BPS invariants depend only on the sign of \[\nu=\Im \,(Z(e_2)/Z(e_1)).\]

If $\nu>0$ then
the only nonzero BPS invariants are
$\Omega(\pm e_1)=\Omega(\pm e_2)=1.$
In the  case $\nu=0$ the BPS structure is non-generic, and also non-integral in general: Joyce and Song \cite[Section 6.2]{JS} show that \[\Omega(e_1+e_2)=(-1)^{k-1}\cdot \frac{ k}{2}.\] The most interesting case is $\nu<0$.  When $k=1$  the only nonzero BPS invariants are
\[\Omega(\pm e_1)=\Omega(\pm e_2)=\Omega(\pm (e_1+e_2))=1.\]
The case $k=2$ has the infinite set of nonzero invariants
\[\Omega(\pm (me_1 + ne_2))=1 \text{ if }|m-n|=1, \qquad \Omega(\pm(e_1+e_2))=-2,\]
with all others being zero. 
In general, for $k>2$, these  BPS structures are not very well-understood. However, it is known \cite[Theorem 6.4]{R} that  they are not in general ray-finite. It is also expected  that there exist regions in $\bC^*$ in which the active rays are dense.

%%%%%%%%%%%%%%%%%%%%%%%%%%%%%%%%%%%%%%%%%%%%%
%%%%%%%%%%%%%%%%%%%%%%%%%%%%%%%%%%%%%%%%%%%%%
%%%%%%%%%%%%%%%%%%%%%%%%%%%%%%%%%%%%%%%%%%%%%

\section{Summary of the contents of the paper}

In this section we give a rough summary of  the contents of the rest of the paper.
Precise statements and proofs can be found in later sections.

\subsection{The Riemann-Hilbert problem}
\label{rh_intro}

Given a convergent BPS structure $(\Gamma,Z,\Omega)$ we will 
 consider an associated Riemann-Hilbert problem. It depends on a choice of a point $\xi\in \bT$ which we call the constant term.  We discuss the statement of this problem more carefully in Section \ref{rh}; for now we just give the rough idea.
 
\begin{problem}
\label{an}Fix a point $\xi\in \bT$.
Construct a piecewise holomorphic map
\[\Phi\colon \bC^*\to \bT\]
with the following three properties:
\begin{itemize}
\item[(a)] As $t\in \bC^*$ crosses an active ray $\ell\subset \bC^*$ in the clockwise direction, the function $\Phi(t)$ undergoes a discontinuous  jump described  by the formula
\[\Phi(t)\mapsto \bS(\ell)(\Phi(t)) .\]

\item[(b)] As $t\to 0$ one has $\exp(Z/t)\cdot \Phi(t)\to \xi$.\smallskip

\item[(c)] As $t\to \infty$ the element $\Phi(t)$ has at most polynomial growth.
\end{itemize}

\end{problem}

 The gist of condition (a) is that the function $\Phi$ should be holomorphic in the complement of the active rays, and for each active ray $\ell \subset \bC^*$ the analytic continuations of the two functions on either side  should differ by composition with the corresponding automorphism $\bS(\ell)$. 
 
 To make sense of (b), note that \[\exp(Z/t)\colon \Gamma\to \bC^*\] is an element of the  torus $\bT_+$, and    recall that $\bT=\bT_-$ is a torsor for $\bT_+$. We often write \begin{equation}\exp(Z/t)\cdot \Phi(t)=\Psi(t)\cdot \xi,\end{equation}
which then defines a map $\Psi\colon \bC^*\to \bT_+.$
Clearly the maps $\Phi$ and $\Psi$ are equivalent data; we use whichever is most convenient.

Composing with the (twisted) characters of $\bT_\pm$, we can alternatively encode the solution  in  either of the  two systems of maps
\[\Phi_\gamma
=x_\gamma\circ \Phi\colon \bC^*\to \bC^*, \qquad \Psi_\gamma=y_\gamma\circ \Psi\colon \bC^*\to \bC^*.\]
Condition (c) is then the statement that for each $\gamma\in \Gamma$ there should exist $k>0$ such that
\[|t|^{-k}<|\Phi_\gamma(t)|<|t|^k, \qquad |t|\gg 0.\]

Problem \ref{an} is closely analogous to the Riemann-Hilbert problems which arise in the study of differential equations with irregular singularities. In that case  the Stokes factors $\bS(\ell)$ lie in a finite-dimensional group $\GL_n(\bC)$, whereas in our situation  they are elements of the  infinite-dimensional group of Poisson automorphisms of the torus $\bT$. We will return to this analogy in the sequel to this paper \cite{Br3}.
%t\to \infty$. 
%indexed by the elements $\gamma\in \Gamma$.

\subsection{Solution in the uncoupled case}

In the case of a finite, uncoupled,  integral BPS structure, and for certain choices of $\xi\in \bT$, the Riemann-Hilbert problem introduced above has a unique solution, which can be  written explicitly in terms of products of modified gamma functions. The inspiration for this comes from work of Gaiotto \cite[Section 3.1]{G}.

 Consider the multi-valued meromophic function on $\bC^*$ defined by
\begin{equation}\label{who}\Lambda(w)=\frac{e^w \cdot\Gamma(w)}{\sqrt{2\pi}\cdot w^{w-\half}}.\end{equation}
Taking the principal value of $\log$ on $\bC^*\setminus\bR_{<0}$, we consider $\Lambda(w)$ as a single-valued holomorphic function on this domain. The formula \eqref{who} is obtained by exponentiating  the initial terms in the Stirling expansion, and  it follows that $\Lambda(w)\to 1$ as $w\to \infty$ in  any closed subsector of $\bC^*\setminus\bR_{<0}$.

\begin{thm}
\label{one}
Let $(Z,\Gamma,\Omega)$ be a finite,  uncoupled, integral BPS structure. Suppose that $\xi\in \bT$ satisfies $\xi(\gamma)=1$ for all active classes $\gamma\in \Gamma$. Then Problem \ref{an}  has the unique solution
\begin{equation}\Psi_\beta(t)= \prod_{\Im Z(\gamma)/t>0} \Lambda\bigg(\frac{Z(\gamma)}{2\pi i t}\bigg)^{\Omega(\gamma)\<\beta,\gamma\>}
\end{equation}
where the product is  over the finitely many active classes $\gamma\in \Gamma$ with $\Im Z(\gamma)/t>0$.
\end{thm}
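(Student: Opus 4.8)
The plan is to verify directly that the proposed formula for $\Psi_\beta$ solves Problem \ref{an}, and then to prove uniqueness by a Liouville-type argument. The key input is the analytic behaviour of the modified gamma function $\Lambda(w)$: first I would record that $\Lambda(w)$ is holomorphic and nonvanishing on $\bC^*\setminus\bR_{<0}$, that $\Lambda(w)\to 1$ as $w\to\infty$ in the right half-plane (from the Stirling expansion built into \eqref{who}), and — crucially — the discontinuity of $\Lambda$ across the cut $\bR_{<0}$. Using the reflection formula $\Gamma(w)\Gamma(1-w)=\pi/\sin(\pi w)$ one computes that the boundary values of $\Lambda$ from the two sides of $\bR_{<0}$ differ by a factor of the form $(1-e^{\pm 2\pi i w})^{\pm 1}$; this is precisely what will reproduce the jump by $\bS(\ell)$.

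\textbf{Checking the three conditions.} Set $w_\gamma(t) = Z(\gamma)/(2\pi i t)$, so that $\Im Z(\gamma)/t > 0$ is equivalent to $\Re w_\gamma(t) > 0$, and the ray $\ell=\bR_{>0}\cdot Z(\gamma)$ corresponds to $w_\gamma(t)\in\bR_{<0}$ (where the branch cut of $\Lambda$ sits). For condition (a): as $t$ crosses an active ray $\ell$ anticlockwise, the classes $\gamma$ with $Z(\gamma)\in\ell$ move across the cut of $\Lambda$, and each factor $\Lambda(w_\gamma)^{\Omega(\gamma)\<\beta,\gamma\>}$ picks up a factor $(1-e^{\pm 2\pi i w_\gamma})^{\pm\Omega(\gamma)\<\beta,\gamma\>}$; since $e^{2\pi i w_\gamma(t)} = e^{Z(\gamma)/t}$, and since $\Psi_\gamma(t)\cdot\xi = \exp(Z/t)\cdot\Phi(t)$ evaluated at $\gamma$ gives $\Phi_\gamma(t) = e^{-Z(\gamma)/t}\cdot$ (something with $\xi(\gamma)=1$), this matches exactly the birational formula for $\bS(\ell)^*$ recorded in approach (iii) of Section \ref{really}, $\bS(\ell)^*(x_\beta)=x_\beta\cdot\prod_{Z(\gamma)\in\ell}(1-x_\gamma)^{\Omega(\gamma)\<\beta,\gamma\>}$. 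I would need to be careful about which factors of $(1-x_\gamma)$ versus $(1-x_{-\gamma})$ appear and about the signs coming from the twisted torus, but the uncoupled hypothesis guarantees all these flows commute so the order of crossing is irrelevant. For condition (b), as $t\to 0$ within a fixed half-plane the finitely many relevant $w_\gamma(t)\to\infty$ in the right half-plane, so each $\Lambda$-factor $\to 1$, giving $\Psi_\beta(t)\to 1$, i.e. $\exp(Z/t)\cdot\Phi(t)\to\xi$; here one also checks that no new active rays intervene because the structure is finite. For condition (c), as $t\to\infty$ each $w_\gamma(t)\to 0$, and near $w=0$ one has $\Lambda(w)\sim \Gamma(w)/(\sqrt{2\pi}\,w^{w-1/2})\sim w^{-1/2}/\sqrt{2\pi}\cdot\Gamma(w+1)$, which has at worst a $|w|^{-1/2}$-type singularity, hence polynomial (indeed sub-polynomial) growth in $|t|$; the number of factors is finite so the product still has polynomial growth.

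\textbf{Uniqueness.} For uniqueness I would argue as follows: given two solutions $\Phi,\Phi'$, the ratio $\Psi/\Psi'$ (using the torsor structure, this is a genuine map $\bC^*\to\bT_+$) has \emph{no} jumps across the active rays, since both solutions jump by the same $\bS(\ell)$ — more precisely one shows the coordinate functions $\Psi_\gamma(t)/\Psi'_\gamma(t)$ extend to single-valued holomorphic functions on all of $\bC^*$. Condition (b) forces this ratio $\to 1$ as $t\to 0$, and condition (c) (applied to both solutions) forces at most polynomial growth as $t\to\infty$; a holomorphic nonvanishing function on $\bC^*$ with polynomial growth at $\infty$ and a finite limit at $0$ must be constant, and the limit $1$ pins the constant. (One should be slightly careful that $\Psi_\gamma/\Psi'_\gamma$ is a priori only bounded polynomially, so one applies the removable-singularity / Liouville argument to, say, $t^N\Psi_\gamma/\Psi'_\gamma$ and a corresponding power of $t$, or works with $\log$ of the ratio.)

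\textbf{Main obstacle.} I expect the genuinely delicate point to be condition (a): getting the discontinuity of $\Lambda$ across $\bR_{<0}$ exactly right — including all signs from the twisted torus $\bT=\bT_-$ and the precise correspondence between ``crossing $\ell$ anticlockwise'' and ``$w_\gamma$ crossing the cut'' — and then matching it term-by-term with the Kontsevich–Soibelman automorphism $\bS(\ell)$ in the form given in Section \ref{really}(iii). A secondary technical point is to justify that the naive definition of $\bS(\ell)$ via the Hamiltonian flow of $\DT(\ell)$ really does agree, in this finite integral case, with the birational formula used above; this is Proposition \ref{barneyy}, which I am allowed to assume, so the real work is the $\Lambda$-function jump computation and its bookkeeping.
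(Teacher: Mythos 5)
Your overall strategy (direct verification of (a)--(c) via properties of $\Lambda$, plus a Liouville argument for uniqueness) is the one the paper uses, and your treatment of (b), (c) and of uniqueness is essentially correct. For uniqueness, though, you should say explicitly why the ratio of two solutions has no jumps: the point is that the classes $\gamma$ occurring in the jump factor \eqref{garage} are \emph{null} by the uncoupled hypothesis, so by Lemma \ref{null} the functions $\Phi_{\rr,\gamma}(t)=\exp(-Z(\gamma)/t)\cdot\xi(\gamma)$ are the same for both solutions and the jump factors literally cancel; this is Lemma \ref{brownpaperbag}. Without that observation, ``both solutions jump by the same $\bS(\ell)$'' does not by itself kill the jump of the ratio, since $\bS(\ell)$ is a nonlinear automorphism of $\bT$.

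The genuine gap is in your mechanism for condition (a). You assert that the active ray $\ell=\bR_{>0}\cdot Z(\gamma)$ corresponds to $w_\gamma(t)=Z(\gamma)/2\pi i t$ lying on the branch cut $\bR_{<0}$ of $\Lambda$, and that the jump of the solution across $\ell$ is the discontinuity of $\Lambda$ across that cut. Neither statement is right. For $t\in\ell$ one has $w_\gamma(t)\in -i\,\bR_{>0}$; the cut $w_\gamma(t)\in\bR_{<0}$ is reached only for $t\in i\ell$, a ray perpendicular to $\ell$. Moreover the discontinuity of $\Lambda$ across $\bR_{<0}$ is governed by the monodromy of $w^{w-\half}=e^{(w-\half)\log w}$, i.e.\ it is a factor $-e^{\pm 2\pi i w}$ and not $(1-e^{\pm 2\pi i w})^{\pm1}$; the reflection formula relates $\Lambda(w)$ to $\Lambda(-w)$, not the two boundary values of $\Lambda$ at a point of its cut. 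What actually produces the jump in the paper's proof (Theorem \ref{hd}) is the change of \emph{index set}: the product defining $\Psi_{\rr,\beta}$ runs over active $\gamma$ with $Z(\gamma)\in i\bH_\rr$, and as $\rr$ rotates past $\ell$ the classes with $Z(\gamma)\in\ell$ are exchanged for their negatives. Using $\Omega(-\gamma)=\Omega(\gamma)$, the ratio $\Phi_{\rr_+,\beta}/\Phi_{\rr_-,\beta}$ collapses to $\prod_{Z(\gamma)\in\ell}\big(\Lambda(w_\gamma)\Lambda(-w_\gamma)\big)^{\Omega(\gamma)\<\beta,\gamma\>}$, and it is here --- with $w$ and $-w$ as arguments, not as two sides of a cut --- that the reflection formula enters, via the identity $\Lambda(w)\cdot\Lambda(-w)=(1-e^{2\pi i w})^{-1}$ for $\Im(w)>0$ (equation \eqref{luminy}, proved in Proposition \ref{twotier}). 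This yields exactly the factor $(1-e^{-Z(\gamma)/t})^{-\Omega(\gamma)\<\beta,\gamma\>}$ required by \eqref{garage}. Your toolkit is the right one, but the step as you describe it would fail if carried out literally; the fix is to compare the two products over complementary half-planes rather than to track a single $\Lambda$-factor through its branch cut.
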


Note that there do indeed exist elements $\xi\in \bT$ satisfying the condition of Theorem \ref{one}. Indeed, the uncoupled assumption implies that the active classes $\gamma\in \Gamma$ span a subgroup $\Gamma_a\subset \Gamma$ on which the form $\<-,-\>$ vanishes. Any basis of this primitive subgroup can be combined with the pullback of a basis of the quotient $\Gamma/\Gamma_a$, to give a basis $(\gamma_1,\cdots,\gamma_n)$ for $\Gamma$, and an element $\xi\in \bT$ can be defined by making arbitrary choices $\xi(\gamma_i)\in \{\pm 1\}$. 
The proof of Theorem \ref{one} is a good  exercise in the basic  properties of the gamma function. The details are given in Section \ref{sol}.

\subsection{Variations of BPS structure}
\label{vbpss}

The variation of BPS invariants in Donaldson-Thomas theory under changes in stability parameters is controlled by the Kontsevich-Soibelman wall-crossing formula. This forms the main ingredient in the following  definition of a variation of BPS structures, which is a special case of the  notion of a continuous family of stability structures  from \cite[Section 2.3]{KS1}.
Full details can be found in Appendix \ref{two}; here we just give the rough idea.

\begin{defn}
\label{co}
A variation of BPS structure over a complex manifold $M$ consists of a collection of BPS structures $(\Gamma_p,Z_p,\Omega_p)$ indexed by the points  $p\in M$, such that
%\smallskip
\begin{itemize}
\item[(a)] The charge lattices  $\Gamma_p$ form a local system of abelian groups, and the intersection forms $\<-,-\>_p$ are covariantly  constant.\smallskip

\item[(b)] Given a covariantly constant family of elements $\gamma_p\in \Gamma_p$, the central charges $Z_p(\gamma_p)\in \bC$ are holomorphic functions of $p\in M$.\smallskip

\item[(c)] The constant in the support property \eqref{support} can be chosen uniformly on compact subsets.\smallskip

\item[(d)] For each acute sector $\Delta\subset \bC^*$, the anti-clockwise product over active rays in $\Delta$\begin{equation}\label{wcw} \bS_p(\Delta)=\prod_{\ell\subset \Delta} \bS_p(\ell) \in \Aut(\bT_p),\end{equation}
is covariantly constant as $p\in M$ varies, providing the boundary rays of $\Delta$ are never active.
\end{itemize}
\end{defn}

Note that the local system in (a) induces a flat Ehresmann connection on the bundle of tori \[(\bT_+)_p=\Hom_\bZ(\Gamma_p,\bC^*),\]
 over the manifold $M$, and hence also on the associated bundle of twisted tori $\bT_p$.
It will require some work to make rigorous sense of the wall-crossing formula, condition (d). We do this using formal completions in Appendix \ref{two} following  \cite{KS1}. This needs no convergence assumptions and completely describes the behaviour of the BPS invariants as the point $p\in M$ varies: knowing all the  invariants $\Omega(\gamma)$ at some point of $M$ determines them at all other points.

The material of Section \ref{newnew} gives interesting examples of variations of BPS structures. For a fixed $k\geq 1$ the corresponding BPS structure $(\Gamma,Z,\Omega)$ is determined by the pair \[(Z(e_1), Z(e_2))\in \mathfrak{h}^2,\] where $\mathfrak{h}$ denotes the upper half-plane.  As this point varies we obtain a variation of BPS structures. In particular, the BPS invariants for $\nu<0$ are completely determined by the trivial case $\nu>0$ and the wall-crossing formula \eqref{wcw}.

\subsection{Tau functions}

Let us consider a variation of BPS structures $(\Gamma_p,Z_p,\Omega_p)$ over a complex manifold $M$. We call such a variation framed if the local system  $(\Gamma_p)_{p\in M}$ is trivial, so that we can  identify all  the lattices $\Gamma_p$  with a fixed lattice $\Gamma$. We can always reduce to this case by passing to a cover of $M$, or by restricting to a neighbourhood of a given point $p\in M$.

Associated to a framed variation $(\Gamma,Z_p,\Omega_p)$ there is a holomorphic map
\[\pi\colon M\to \Hom_\bZ(\Gamma,\bC)\isom \bC^n, \quad p\mapsto Z_p\]
which we call the period map.
We say that the variation is miniversal if the period map is  a local isomorphism. In that case, if we choose a basis $(\gamma_1,\cdots, \gamma_n)\subset \Gamma$, the functions $z_i=Z(\gamma_i)$ form a system of local co-ordinates in a neighbourhood of any given point of $M$.

Consider a framed, miniversal variation $(\Gamma_p,Z_p,\Omega_p)$ over a manifold $M$, and choose a basis $(\gamma_1,\cdots, \gamma_n)\subset \Gamma$ as above. For  each point $p\in M$ we can consider the Riemann-Hilbert problem associated to the BPS structure $(\Gamma,Z_p,\Omega_p)$. The wall-crossing formula,  Definition \ref{co} (d), makes it reasonable to ask for a family of solutions to these problems which is a piecewise holomorphic function of $p\in M$.
Such a family of solutions is given by a piecewise holomorphic map
\[\Psi\colon M\times \bC^*\to \bT_+,\]
 which we view as a  function of the co-ordinates  $(z_1,\cdots, z_n)\in \bC^n$ and the parameter $t\in \bC^*$.  We define a  $\tau$-function for the given family of solutions to be a piecewise holomorphic function \[\tau\colon M\times \bC^* \to \bC^*,\]
 which is invariant under simultaneous  rescaling of all co-ordinates $z_i$ and the parameter $t$, and which satisfies  the equations
 \begin{equation}
\label{hungry}\frac{1}{2\pi i}\cdot \frac{\partial \log \Psi_{\gamma_j}}{\partial t}=\sum_i \epsilon_{ij}  \frac{\partial \log \tau}{\partial z_i}, \end{equation}
where $\epsilon_{ij}=\<\gamma_i,\gamma_j\> $.
When the form $\<-,-\>$ is non-degenerate these conditions uniquely determine $\tau$ up to multiplication by a constant scalar factor.

It is not clear at present why a $\tau$-function should exist in general, and the above definition should be thought of as being somewhat experimental. Nonetheless, in the uncoupled case we will see that $\tau$-functions do exist, and are closely related to various partition functions arising in quantum field theory. We hope to return to the general case in future publications.
 
 \subsection{Tau function in the uncoupled case}
Suppose given a miniversal variation of finite, uncoupled, integral BPS structures over a complex manifold $M$. We  will show that the family of  solutions given by Theorem \ref{one} has a corresponding $\tau$-function. To describe this function  we first introduce the expression
\begin{equation}\Upsilon(w)=\frac{e^{-\zeta'(-1)}\, e^{\frac{3}{4}w^2} \,G(w+1)}{(2\pi)^{\frac{w}{2}}\,w^{\frac{w^2}{2}}},\end{equation}
where $G(x)$ is the Barnes $G$-function, and $\zeta(s)$ the Riemann zeta function. 

\begin{thm}
\label{doss}
Let $(\Gamma,Z_p,\Omega_p)$ be a framed, miniversal variation of finite, uncoupled, integral BPS structures over a complex manifold $M$. Then the function  
\begin{equation}\tau(Z,t)=\prod_{\Im Z(\gamma)/t>0} \Upsilon\bigg(\frac{Z(\gamma)}{2\pi i t}\bigg)^{\Omega(\gamma)}\end{equation}
is a $\tau$-function for the family of solutions given by Theorem \ref{one}.\end{thm}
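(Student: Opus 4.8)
The plan is to check directly that the proposed function $\tau$ satisfies the three conditions defining a $\tau$-function: that it is piecewise holomorphic with values in $\bC^*$, that it is invariant under simultaneous rescaling of $(z_1,\dots,z_n,t)$, and that it solves the system \eqref{hungry} for the family of solutions $\Psi$ of Theorem \ref{one}. The first two are essentially formal. Since the BPS structure is finite, for each $(Z,t)$ the set $S(Z,t)=\{\gamma\in\Gamma:\Omega(\gamma)\neq 0,\ \Im(Z(\gamma)/t)>0\}$ is finite, and it is locally constant on the complement of the locally finite family of loci $\{Z(\gamma)/t\in\bR\}$ over active $\gamma$ --- precisely the walls across which $\Psi$ jumps. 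Writing $w_\gamma=Z(\gamma)/(2\pi i t)$, the condition $\Im(Z(\gamma)/t)>0$ places $w_\gamma$ in the open right half-plane, which avoids the branch cut of $\Upsilon$; hence each factor $\Upsilon(w_\gamma)$ of $\tau=\prod_{\gamma\in S(Z,t)}\Upsilon(w_\gamma)^{\Omega(\gamma)}$ is holomorphic and nonvanishing, so $\tau$ is $\bC^*$-valued and piecewise holomorphic with the same walls as $\Psi$. And $\tau$ is manifestly invariant under $z_i\mapsto\lambda z_i$, $t\mapsto\lambda t$, since this fixes every $w_\gamma$ together with the set $S(Z,t)$.

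The substance is the system \eqref{hungry}, and the first step is to reduce it to a one-variable identity. Differentiating by the chain rule, using $\partial w_\gamma/\partial t=-w_\gamma/t$ and $\partial w_\gamma/\partial z_j=\gamma^j/(2\pi i t)$ where $\gamma=\sum_k\gamma^k\gamma_k$, together with the elementary identity $\sum_j\epsilon_{ij}\gamma^j=\<\gamma_i,\gamma\>$, one finds that on each chamber both sides of \eqref{hungry} become sums over $\gamma\in S$ of $\Omega(\gamma)\<\gamma_i,\gamma\>$ times $\tfrac{1}{2\pi i t}$ times an expression in the single variable $w_\gamma$: on the left this expression is built from $\Lambda$, via the $t$-derivative of $\log\Psi_{\gamma_i}$ and the formula of Theorem \ref{one}, and on the right it is built from $\Upsilon$, via the $z_j$-derivatives of $\log\tau$. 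Matching these term by term reduces the whole system \eqref{hungry}, for every $i$ at once, to the single functional identity
\[
\frac{d}{dw}\log\Upsilon(w)\;=\;w\cdot\frac{d}{dw}\log\Lambda(w)
\]
(the overall sign being dictated by the conventions of Sections \ref{rh}--\ref{sol}).

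To establish this identity I would compute both sides in closed form. From $\log\Lambda(w)=w+\log\Gamma(w)-\tfrac12\log(2\pi)-(w-\tfrac12)\log w$ one gets $\tfrac{d}{dw}\log\Lambda(w)=\psi(w)-\log w+\tfrac{1}{2w}$, where $\psi=\Gamma'/\Gamma$, so $w\,\tfrac{d}{dw}\log\Lambda(w)=w\psi(w)-w\log w+\tfrac12$. From $\log\Upsilon(w)=-\zeta'(-1)+\tfrac34w^2+\log G(w+1)-\tfrac{w}{2}\log(2\pi)-\tfrac{w^2}{2}\log w$, together with the classical formula $\tfrac{d}{dw}\log G(w+1)=\tfrac12-w+\tfrac12\log(2\pi)+w\psi(w)$ --- which follows from Alexeiewsky's theorem, equivalently from the functional equation $G(w+1)=\Gamma(w)G(w)$ and the asymptotics of $\log G$ --- the $w$-linear and $\log(2\pi)$ terms cancel and one is left with $\tfrac{d}{dw}\log\Upsilon(w)=\tfrac12+w\psi(w)-w\log w$, which agrees. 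I expect the main obstacle to be not these Stirling-type manipulations, which are routine, but the sign-and-normalization bookkeeping of the reduction step: one must unwind the precise definition of $\bS(\ell)$, the orientation of the jump in Problem \ref{an}, the normalization of $\Psi$ in Theorem \ref{one} and the exact shape of \eqref{hungry}, and confirm that everything conspires to produce exactly the displayed identity. The additive constant $-\zeta'(-1)$ in $\log\Upsilon$, being killed by the derivative, plays no role in \eqref{hungry}; it is pinned down instead by the asymptotic normalization of $\Upsilon$.
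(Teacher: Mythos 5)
Your proposal is correct and takes essentially the same route as the paper: the chain-rule reduction of \eqref{hungry} to the single one-variable identity $\frac{d}{dw}\log\Upsilon(w)=w\,\frac{d}{dw}\log\Lambda(w)$ is exactly the computation in the paper's proof, and your derivation of that identity from the classical formula for $\frac{d}{dw}\log G(w+1)$ is precisely the paper's Lemma \ref{oldy}.
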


The known asymptotics  of the $G$-function imply  that $\tau(Z,t)$  has  an  asymptotic expansion involving the Bernoulli numbers
\begin{equation}\log \tau(Z,t)\sim \frac{1}{24} \sum_{\gamma\in \Gamma} \Omega(\gamma)\log \bigg(\frac{2\pi i t}{Z(\gamma)}\bigg) +\sum_{g\geq 2} \sum_{\gamma\in \Gamma}   \frac{\Omega(\gamma)\cdot B_{2g}}{4g\, (2g-2)} \bigg(\frac{2\pi i t}{Z(\gamma)}\bigg)^{2g-2}\end{equation}
valid as $t\to 0$ in any  half-plane whose boundary rays are not active.

\subsection{Two classes of examples}

There are two important classes of examples of BPS structures where we can say something about solutions to the Riemann-Hilbert problem. Both are also of interest in theoretical physics. They are treated a little more thoroughly in Sections \ref{gw} and \ref{in}, but  there are many unanswered questions which we leave for future research.

\subsection* {\it (i) Topological strings.} Let $X$ be a compact Calabi-Yau threefold. There is a variation of BPS structures over the complexified K{\" a}hler cone
\[\{\omega_\bC=B+i\omega\in H^2(X,\bC): \omega\text{ ample}\},\]
arising  mathematically from generalised Donaldson-Thomas theory applied to coherent sheaves on $X$ supported in dimension $\leq 1$. The  BPS invariants  are expected to coincide with the genus 0 Gopakumar-Vafa invariants $\GV(0,\beta)$ (see \cite[Conjecture 6.20]{JS}). Assuming this, we argue  that the asymptotic expansion of the resulting $\tau$-function should be related to the Gromov-Witten partition function of $X$. More precisely, it should reproduce the $g\geq 2$ terms in those parts of the  partition function arising from constant maps and genus 0 degenerate contributions:%\vspace{.1em}
\[ \log \tau(\omega_\bC,t)\sim \chi(X)
\sum_{g\geq 2}   \frac{ B_{2g} \,B_{2g-2} \, (2\pi t)^{2g-2} }{4g\, (2g-2) \,(2g-2)!}+
%\[+\sum_{\beta\in H_2(X,\bZ)}\Omega(\beta)\sum_{k\geq 1} \frac{1}{k} \cdot \Big(2\sin \Big( \frac{k\lambda}{2} \Big)\Big)^{-2} \cdot e^{2\pi i k \omega_\bC\cdot \beta}.\]
\sum_{\stackrel{g\geq 2}{\beta\in H_2(X,\bZ)}}\GV(0,\beta) \, \frac{B_{2g} \, (2\pi t)^{2g-2} }{2g\, (2g-2)!  }\Li_{3-2g}(e^{2\pi i \omega_\bC\cdot \beta}).\]
We give a complete proof of this result in the case of the resolved conifold in \cite{tocome}: this involves writing down a non-perturbative version of the above expression and checking that it gives rise via \eqref{hungry} to a solution to the Riemann-Hilbert problem. 
\subsection* {\it (ii) Theories of class $S$} Our second  example relates to the class of $N=2$, $d=4$  gauge theories known as theories of class $S$. We consider only the case of  gauge group $\SU(2)$. To specify the theory we need to fix a genus $g\geq 0$ and a collection of $d\geq 1$ integers  \[m=(m_1,\cdots,m_d),  \quad m_i\geq 2.\]
 Mathematically, we can then proceed by introducing a complex orbifold  $\Quad(g,m)$ parameterizing pairs $(S,q)$ consisting of a Riemann surface $S$ of genus $g$, and a meromorphic quadratic differential $q$ on $S$ which has simple zeroes, and poles of the given multiplicities $m_i$. It is proved in \cite{BS} that this  space also  arises as a (discrete quotient of) the space of stability conditions on a  triangulated category $\cD(g,m)$ having the three-dimensional Calabi-Yau property. Applying generalized Donaldson-Thomas theory then leads to a variation of BPS structures over $\Quad(g,m)$, whose central charge is given by the periods of the differential $q$, and whose BPS invariants are counts of finite-length trajectories.
 Work of Iwaki and Nakanishi \cite{IN} shows  that the Riemann-Hilbert problem corresponding to a pair $(S,q)$ is closely related to exact WKB analysis of  the corresponding Schr{\"o}dinger equation
 \[\hbar^2 \,\frac{d^2}{dz^2} y(z;\hbar)= q(z) y(z,\hbar),\]
 where $z$ is some local co-ordinate in a fixed projective structure on $S$, and $\hbar$ should be identified with the variable $t$ in Problem \ref{an}. This story is the conformal limit of that described by Gaiotto, Moore and Neitzke in the paper \cite{GMN2}.

%%%%%%%%%%%%%%%%%%%%%%%%%%%%%%%%%%%%%%%%%%%%%%%%%
%%%%%%%%%%%%%%%%%%%%%%%%%%%%%%%%%%%%%%%%%%%%%%%%%
%%%%%%%%%%%%%%%%%%%%%%%%%%%%%%%%%%%%%%%%%%%%%%%%%
%%%%%%%%%%%%%%%%%%%%%%%%%%%%%%%%%%%%%%%%%%%%%%%%%

\section{The Riemann-Hilbert problem}
\label{rh}

In this section we  discuss the Riemann-Hilbert problem defined by a convergent BPS structure.  It is closely related to the Riemann-Hilbert problem considered by Gaiotto, Moore and Neitzke \cite{GMN1}, and has also been studied by Stoppa and his collaborators \cite{BSt,FGS}.

\subsection{Analytic BPS automorphisms}
\label{ref}
We begin by summarising  some basic analytic facts which are proved in Appendix \ref{converge}. 
Let $(\Gamma, Z, \Omega)$ be a  BPS structure, let  $\bT$ be the associated twisted torus, and fix an acute sector $\Delta\subset \bC^*$. For each real number $R>0$ define the open subset $U_\Delta(R)\subset \bT$  to be the interior of the subset
\[\big\{g\in \bT: Z(\gamma)\in \Delta \text{ and } \Omega(\gamma)\neq 0\implies |g(\gamma)|<\exp(-R\|\gamma\|)\big\}\subset \bT.\]
It follows from the support property that the subset $U_\Delta(R)\subset \bT$ is non-empty (see Lemma \ref{dyl}).
Recall the definition of the height of a ray $\ell\subset \bC^*$ from Section \ref{raydiagram}. 
 
\begin{prop}
\label{propy}
Let $(\Gamma, Z, \Omega)$ be a convergent BPS structure, and $\Delta\subset \bC^*$ a convex sector.
For sufficiently large $R>0$ the following statements hold:
\begin{itemize}
\item[(i)] For each ray $\ell\subset \Delta$, the series \eqref{hamiltonian} defining $\DT(\ell)$ is absolutely convergent on $U_\Delta(R)$, and hence defines a holomorphic function \[\DT(\ell)\colon U_\Delta(R)\to \bC.\]
\item[(ii)] The time 1 Hamiltonian flow of the function $\DT(\ell)$  with respect to the Poisson structure $\{-,-\}$ on $\bT$ defines a holomorphic  embedding
\[\bS(\ell)\colon U_\Delta(R)\to \bT.\]
\item[(iii)] For each $H>0$, the composition in anti-clockwise order
\[\bS_{<H}(\Delta) = \bS_{\ell_1} \circ \bS_{\ell_2} \circ \cdots \circ \bS_{\ell_k},\]
 corresponding to the finitely many rays $\ell_i\subset \Delta$ of height $< H$ exists, and  the pointwise limit \[\bS(\Delta)=\lim_{H\to \infty} \bS_{<H}(\Delta) \colon U_\Delta(R)\to \bT\]
is a well-defined  holomorphic embedding. 
\end{itemize}
\end{prop}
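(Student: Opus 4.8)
The strategy is to reduce all three parts to quantitative estimates valid on $U_\Delta(R)$ for $R$ large, the governing principle being that the Hamiltonian vector fields attached to rays become uniformly small there; the formalities of partially-defined holomorphic self-maps of $\bT$, and of their compositions and limits, are then the routine bookkeeping carried out in Appendix~\ref{converge}. For part (i) I would first record two comparison bounds between the norm $\|\cdot\|$ and the central charge: the support property \eqref{support} gives $|Z(\gamma)|>C\|\gamma\|$ for active $\gamma$, and, since $Z$ is a linear functional on $\Gamma\otimes_\bZ\bR$, there is also a constant $C'$ with $|Z(\gamma)|\le C'\|\gamma\|$ for all $\gamma$. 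On $U_\Delta(R)$ one has $|x_\gamma(g)|<e^{-R\|\gamma\|}$ whenever $\gamma$ is active with $Z(\gamma)\in\Delta$, and since $x_{m\alpha}(g)=x_\alpha(g)^m$ on the twisted torus the same holds whenever $\DT(\gamma)\ne 0$ with $Z(\gamma)\in\ell\subset\Delta$. Regrouping the defining sum \eqref{bps} by primitive class and summing $\sum_{m\ge1}m^{-2}$ bounds the tail of $\DT(\ell)$ by $\tfrac{\pi^2}{6}\sum_{\alpha}|\Omega(\alpha)|e^{-R\|\alpha\|}\le\tfrac{\pi^2}{6}\sum_{\alpha}|\Omega(\alpha)|e^{-(R/C')|Z(\alpha)|}$, finite once $R\ge C'R_0$ with $R_0$ the constant of \eqref{concon}; applied on compact subsets this yields locally uniform, hence holomorphic, convergence. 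The same regrouping rewrites $\DT(\ell)=\sum_{\alpha}\Omega(\alpha)\,\Li_2(x_\alpha)$, the sum over active $\alpha$ with $Z(\alpha)\in\ell$.

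For part (ii), the Hamiltonian vector field $V$ of $\DT(\ell)$ acts on twisted characters by $V(x_\beta)=\{\DT(\ell),x_\beta\}=\big(\sum_{Z(\gamma)\in\ell}\DT(\gamma)\<\gamma,\beta\>x_\gamma\big)\,x_\beta$, which by the $\Li_2$ form equals $-\big(\sum_{\alpha}\Omega(\alpha)\<\alpha,\beta\>\log(1-x_\alpha)\big)\,x_\beta$ (this also accounts for the birational formula recorded in Section~\ref{really}). Using $|\<\alpha,\beta\>|\lesssim\|\alpha\|\,\|\beta\|$, the bound $|\log(1-z)|\le 2|z|$ for $|z|\le\tfrac12$, and the estimate of part (i), the logarithmic component $V(x_\beta)/x_\beta$ is bounded on $U_\Delta(R)$ by $\|\beta\|\cdot\varepsilon(R)$ with $\varepsilon(R)\to 0$ as $R\to\infty$; thus $V$ is a holomorphic vector field on $U_\Delta(R)$, diagonal in the torus directions and uniformly small. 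Its flow is governed, in logarithmic coordinates $w_i=\log x_{\gamma_i}$, by an autonomous holomorphic ODE $\dot w_i=G_i(w)$ with the $G_i$ as above, and a bootstrap argument shows that for $R$ large the solution through any $g\in U_\Delta(R)$ exists for $s\in[0,1]$ and never leaves $U_\Delta(R/2)$: so long as the trajectory stays in $U_\Delta(R/2)$, for each active $\gamma$ with $Z(\gamma)\in\Delta$ the quantity $\log|x_\gamma|$ drifts by at most $\|\gamma\|\,\varepsilon(R/2)<\tfrac{R}{2}\|\gamma\|$, so $|x_\gamma|$ stays comfortably below $e^{-(R/2)\|\gamma\|}$, which in particular keeps the trajectory away from the loci $x_\alpha=1$ and from infinity. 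The time-$1$ map $\bS(\ell)=\phi_1$ is then defined on all of $U_\Delta(R)$; it is holomorphic because flows of holomorphic vector fields are, injective by uniqueness for the ODE (its inverse being the time-$(-1)$ flow, which the same estimate places on $\bS(\ell)(U_\Delta(R))$), and an immersion because the linearised flow is invertible --- hence a holomorphic embedding. This step --- ensuring the flow really does run for the full time $[0,1]$ without escaping a domain of holomorphy --- is the main obstacle, and it is exactly where the support property (which makes $|Z(\gamma)|\gtrsim\|\gamma\|$) and the convergence hypothesis combine, with $R$ large doing the work.

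For part (iii), the support property makes the set of rays of height $<H$ in $\Delta$ finite, so $\bS_{<H}(\Delta)$ is a genuine finite composition. It is defined on $U_\Delta(R)$ because the perturbations are summable: writing $\varepsilon_\ell(R)$ for the bound attached to $\bS(\ell)$, one has $\sum_{\ell\subset\Delta}\varepsilon_\ell(R)\le\varepsilon(R)$ --- the single sum over all active $\alpha$ --- so that each $\bS(\ell)$ maps $U_\Delta(R')$ into $U_\Delta(R'-\varepsilon_\ell(R'))$, and when $R$ is large the running parameter never drops below $R/2$; hence every finite composition $\bS_{<H}(\Delta)$ is defined on $U_\Delta(R)$ with image in $U_\Delta(R/2)$. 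Enlarging $H$ only inserts further factors $\bS(\ell)$, at their angular positions in the clockwise order, with $\height(\ell)\ge H$ and total size $\sum_{\height(\ell)\ge H}\varepsilon_\ell(R/2)\to 0$; a telescoping estimate then shows $(\bS_{<H}(\Delta))_H$ is uniformly Cauchy on compact subsets, so the pointwise limit $\bS(\Delta)$ exists and is holomorphic, and the analogously convergent composition of inverses shows it is a holomorphic embedding. These last manipulations --- composing and passing to limits of partially-defined, near-identity self-embeddings of $\bT$ --- are precisely the elementary lemmas that I would collect in, and invoke from, Appendix~\ref{converge}.
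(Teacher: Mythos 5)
Your argument follows the paper's own proof in all essentials: the weighted convergence estimate $\sum_\gamma\|\gamma\|^p\,|\DT(\gamma)|\,e^{-R\|\gamma\|}<\epsilon$ (you phrase it via $\Omega$ and $\Li_2$, the paper via $\DT$ and the raw series --- these are equivalent up to the comparison $k_1\|\gamma\|<|Z(\gamma)|<k_2\|\gamma\|$), the resulting uniform bound $\|\beta\|\cdot\varepsilon(R)$ on the logarithmic velocity of the Hamiltonian flow, the bootstrap showing that the trajectory through a point of $U_\Delta(R)$ remains in a slightly larger domain for time $1$, and the summability of the perturbations $\sum_{\height(\ell)>H}M(\ell)\to 0$ driving the limit in (iii). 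Parts (i) and (ii) are complete as written.

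The one step you leave genuinely under-justified is the ``telescoping estimate'' in (iii). Knowing that each newly inserted factor $\bS(\ell)$ with $\height(\ell)\ge H_1$ displaces points by at most $\varepsilon_\ell$ does not by itself bound the change in the full composition: the new factor sits in the \emph{middle} of the clockwise product, so its effect is propagated through the factors of smaller height applied after it, and you need those factors (and their finite compositions) to have uniformly bounded Lipschitz constants on $U_\Delta(R)$. Your $\varepsilon(R)$ controls displacements (the $p=1$ weighted sum), not derivatives. The paper supplies exactly this missing ingredient: the $p=2$ case of Lemma~\ref{sven} bounds the derivative of each $\bS(\ell)$ (Remark~\ref{const}), and the difference of two compositions differing by one insertion is then handled as a conjugate $ABA^{-1}$ in Proposition~\ref{prop2}, whose norm is controlled by $M(\ell_j)$ together with the derivative bound on $A$. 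Your framework produces the needed $p=2$ estimate by the same regrouping (since $\|\alpha\|^2e^{-\delta\|\alpha\|}$ is bounded), so the gap is easily filled, but as written the Cauchy property of the sequence $\bS_{<H}(\Delta)$ does not follow from the displacement bounds alone.
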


\begin{proof}
See Appendix \ref{converge}, Proposition \ref{prop}.
\end{proof}

We think of the maps $\bS(\ell)$  of Proposition \ref{propy} as  being partially-defined automorphisms of the twisted torus $\bT$. We will usually restrict attention to BPS structures which, in the terminology of Section \ref{defns}, are ray-finite, generic and integral. The maps  $\bS(\ell)$ can then be computed using the following result.

\begin{prop}
\label{barneyy}
 Suppose that $(\Gamma, Z, \Omega)$ is ray-finite, generic and integral.  Then for any ray $\ell\subset \bC^*$ the   embedding  $\bS(\ell)$ of Proposition \ref{propy} extends to a birational automorphism of $\bT$, whose action on twisted characters is given by 
\begin{equation}\bS(\ell)^*(x_\beta)=x_\beta\cdot \prod_{Z(\gamma)\in \ell}(1-x_\gamma)^{\,\Omega(\gamma)\<\gamma,\beta\>}.\end{equation}
\end{prop}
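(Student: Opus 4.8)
The plan is to pin down the holomorphic embedding $\bS(\ell)\colon U_\Delta(R)\to\bT$ produced by Proposition \ref{propy} by integrating its defining Hamiltonian flow explicitly, and then to observe that the resulting formula is visibly birational. The point is that genericity collapses the a priori complicated flow on the infinite-dimensional group of Poisson automorphisms to an elementary one-variable dilogarithm computation.

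First I would rewrite $\DT(\ell)$ in a tractable form. By ray-finiteness there are only finitely many primitive classes $\alpha_1,\dots,\alpha_N$ some positive multiple of which is active with central charge on $\ell$, and every class $\gamma$ with $Z(\gamma)\in\ell$ and $\DT(\gamma)\neq 0$ is a positive multiple of exactly one $\alpha_i$. Inserting $\DT(m\alpha_i)=\sum_{k\mid m}k^{-2}\,\Omega((m/k)\alpha_i)$ and rearranging the absolutely convergent series on $U_\Delta(R)$ (legitimate by Proposition \ref{propy}(i)) gives
\[\DT(\ell)=\sum_{i=1}^N\sum_{l\geq 1}\Omega(l\alpha_i)\cdot\Li_2(x_{\alpha_i}^{\,l}),\]
which is a \emph{finite} sum since only finitely many $l$ have $\Omega(l\alpha_i)\neq 0$, and where $x_{\alpha_i}^{\,l}=x_{l\alpha_i}$ because $\<\alpha_i,\alpha_i\>=0$. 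Now comes the flow computation: genericity forces $\<\gamma,\gamma'\>=0$ for any two active classes with $Z(\gamma),Z(\gamma')\in\ell$, so by \eqref{poisson2} each $x_\gamma$ with $Z(\gamma)\in\ell$ Poisson-commutes with $\DT(\ell)$ and is therefore constant along the flow. Hence along the time-$s$ Hamiltonian flow of $\DT(\ell)$ one has $\frac{d}{ds}\log x_\beta=\{\DT(\ell),x_\beta\}/x_\beta=\sum_{Z(\gamma)\in\ell}\DT(\gamma)\<\gamma,\beta\>x_\gamma$, and this right-hand side is \emph{constant} in $s$. Evaluating it via the display above and the identity $x\,\frac{d}{dx}\Li_2(x^l)=-l\log(1-x^l)$, the time-$1$ flow acts by
\[\log\bS(\ell)^*(x_\beta)=\log x_\beta+\sum_{Z(\gamma)\in\ell}\Omega(\gamma)\<\beta,\gamma\>\log(1-x_\gamma),\]
where all logarithms are single-valued on $U_\Delta(R)$ because $|x_\gamma|<\exp(-R\|\gamma\|)<1$ there; exponentiating gives the asserted formula. (One must keep track of the sign convention for the Hamiltonian vector field so that the exponent comes out as $+\Omega(\gamma)\<\beta,\gamma\>$.)

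Finally, for the extension to a birational automorphism: by integrality the exponents $\Omega(\gamma)\<\beta,\gamma\>$ are integers, so $x_\beta\mapsto x_\beta\prod_{Z(\gamma)\in\ell}(1-x_\gamma)^{\Omega(\gamma)\<\beta,\gamma\>}$ determines a morphism of $\bC(\bT)$, i.e.\ a rational self-map of $\bT$, which agrees with $\bS(\ell)$ on the open set $U_\Delta(R)$. It is birational because the analogous map with all exponents negated is a two-sided inverse: each $x_\gamma$ with $Z(\gamma)\in\ell$ is fixed by both maps, since the relevant exponents $\Omega(\gamma)\<\gamma,\gamma'\>$ vanish by genericity, so the composite sends $x_\beta$ to $x_\beta$. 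When $\ell$ is not active the product is empty and $\bS(\ell)=\id$.

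I expect the only real work to be bookkeeping rather than anything deep: justifying the rearrangement of the infinite series for $\DT(\ell)$ into a finite sum of dilogarithms on $U_\Delta(R)$, treating the non-primitive active classes correctly in the Möbius inversion, and confirming that the explicitly-integrated flow genuinely coincides with the embedding furnished abstractly by Proposition \ref{propy} (e.g.\ by uniqueness of solutions to the flow ODE). The conceptual content is entirely in the remark that genericity makes every relevant twisted character flow-invariant.
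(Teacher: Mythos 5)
Your proposal is correct and follows essentially the same route as the paper's proof (Proposition \ref{barney} in Appendix B): rewrite $\DT(\ell)$ as a finite sum of dilogarithms via the relation between $\DT$ and $\Omega$, use genericity to make every twisted character $x_\gamma$ with $Z(\gamma)\in\ell$ flow-invariant so the Hamiltonian flow integrates explicitly to the stated product, and invoke integrality for birationality. The extra bookkeeping you flag (single-valuedness of the logarithms on $U_\Delta(R)$, the explicit two-sided inverse with negated exponents) is a harmless elaboration of the same argument.
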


\begin{proof}
See Appendix  \ref{converge}, Proposition \ref{barney}.
\end{proof}

Note that if the BPS structure $(\Gamma,Z,\Omega)$ satisfies the stronger condition of being finite, then there are only finitely many active rays, so for any acute sector $\Delta\subset \bC^*$ the map $\bS(\Delta)$ of Proposition \ref{propy} also extends to a birational automorphism of $\bT$.

\subsection{Statement of the problem}
\label{real}

Let $(\Gamma, Z, \Omega)$ be a convergent BPS structure, and denote by $\bT$ the associated twisted torus. Given a ray $\rr\subset \bC^*$ we consider the corresponding half-plane
\[\bH_\rr=\{t\in \bC^*:t=z\cdot v \text{ with } z\in \rr\text{ and }\Re(v)>0\}\subset \bC^*\]
centered on it.
We shall be dealing with functions of the form \[\Phi_\rr\colon \bH_\rr\to  \bT.\] Composing with the twisted characters of $\bT$ we can equivalently consider functions
\[\Phi_{\rr,\gamma}\colon \bH_\rr\to \bC^*,\qquad \Phi_{\rr,\gamma}(t)=x_\gamma(\Phi_\rr(t)).\]

The Riemann-Hilbert problem associated to the BPS structure $(\Gamma, Z, \Omega)$ depends on a choice of element $\xi\in \bT$ which we call the constant term. It reads as follows:

\begin{problem}
\label{dtsect}
Fix an element $\xi\in \bT$. For each non-active ray $\rr\subset \bC^*$, we  seek a holomorphic function
$\Phi_\rr\colon \bH_\rr\to \bT$
such that the following three conditions are satisfied:
\begin{itemize}

\item[(RH1)] {\it Jumping.} Suppose that two  non-active rays $\rr_-,\rr_+\subset \bC^*$ form the  boundary rays of a convex sector $\Delta\subset \bC^*$, taken in clockwise order. Then
\[\Phi_{\rr_+}(t)= \bS(\Delta)( \Phi_{\rr_-}(t)), \]
for all $t\in \bH_{\rr_-}\cap \bH_{\rr_+}$ with  $ 0<|t|\ll 1$.
\smallskip

\item[(RH2)] {\it Finite limit at $0$}.
For each non-active ray $\rr\subset \bC^*$, and each class $\gamma\in \Gamma$, we have
\[\exp(Z(\gamma)/t)\cdot \Phi_{\rr,\gamma}(t) \to \xi(\gamma)\]
as $t\to 0$ in the half-plane $\bH_\rr$.\smallskip

\item[(RH3)] {\it Polynomial growth at $\infty$}. For any class $\gamma\in \Gamma$, and any non-active ray $\rr\subset \bC^*$, there exists $k>0$ such that
\[|t|^{-k} < |\Phi_{\rr,\gamma}
(t)|<|t|^k,\]
for  $t\in \bH_\rr$ with  $|t|\gg 0$.
\end{itemize}
\end{problem}

To make sense of the condition (RH1) note that by Proposition \ref{barneyy} we can find  $R>0$ such that the partially-defined automorphism $\bS(\Delta)$ is well-defined on the open subset $U_{\Delta}(R)\subset \bT$. Observe that if an active class $\gamma\in \Gamma$ satisfies $Z(\gamma)\in \Delta$, and we take $t\in \bH_{\rr_1}\cap \bH_{\rr_2}$, then the quantity $Z(\gamma)/t$ has strictly positive real part.  Using the support property it follows (see the proof of Lemma \ref{dyl}) that for  $ 0<|t|\ll 1$
\[\exp(-Z/t)\cdot \xi\in U_\Delta(R).\]
Condition (RH2) then implies that  $\Phi_{\rr_i}(t)\in U_\Delta(R)$ whenever  $t\in \bH_{\rr_1}\cap \bH_{\rr_2}$ with  $ 0<|t|\ll 1$, so that 
 the  relation (RH1) is indeed well-defined.

\begin{remark}
\label{wet}
When the BPS structure $(\Gamma,Z,\Omega)$ is finite, integral and generic, we can rewrite the condition (RH1) using Proposition \ref{barneyy}. Given an active ray $\ell$, consider a small anticlockwise perturbation $\rr_-$, and  a small clockwise perturbation $\rr_+$, both rays being  non-active.  Then  (RH1) is the condition that
\begin{equation}
\label{garage}\Phi_{\rr_+,\beta}(t)=\Phi_{\rr_-,\beta}(t)\cdot \prod_{Z(\gamma)\in \ell}   (1-\Phi_{\rr_{\pm,\gamma}}(t))^{\Omega(\gamma)\<\gamma,\beta\>}.\end{equation}
Note that the generic assumption ensures there is no need to distinguish the functions $\Phi_{\rr_\pm}(t)$ inside the product, since for classes $\gamma\in \Gamma$ satisfying $Z(\gamma)\in \ell$ they are equal.
\end{remark}

It will be useful to consider the maps $\Psi_\rr\colon \bH_\rr\to \bT_+$ defined by
\[\exp(Z/t)\cdot \Phi_\rr(t)=\Psi_\rr(t)\cdot \xi.\]
 Composing with the characters of $\bT_+$ we can also encode the solution in the system of maps
\[\Psi_{\rr,\gamma}(t)=y_\gamma(\Psi_\rr(t))=\exp(Z(\gamma)/t)\cdot \Phi_{\rr,\gamma}(t) \cdot \xi(\gamma)^{-1}.\]
Of course the maps $\Phi_\rr$ and $\Psi_\rr$ are equivalent data: we use whichever is most convenient.
%for each class $\gamma\in \Gamma$.

\begin{remark}
\label{whoopsy}
It follows from the condition (RH1) that if two non-active rays $\rr_1,\rr_2$ bound a convex sector containing no active rays, then the two functions $\Phi_{\rr_i}\colon \bH_{\rr_i}\to \bT$ required in Problem \ref{dtsect} glue together to give a holomorphic  function on $\bH_{\rr_1}\cup\bH_{\rr_2}$. It follows that if a non-active ray $\rr\subset \bC^*$ is  not a limit of active rays, then  the corresponding function  $\Phi_\rr$  extends analytically to a neighbourhood of the closure of  $\bH_\rr\subset \bC^*$.
\end{remark}

\subsection{Remarks on the formulation}

The Riemann-Hilbert problem of the last subsection is the main subject of this paper.  Unfortunately we have no general results giving existence or uniqueness of its solutions. Moreover one could easily imagine various small perturbations of the statement of Problem \ref{dtsect}, and it will require further work to decide for sure exactly what the correct conditions should be. We make a few remarks on this here.

\begin{remarks}
\label{oh}
\begin{itemize}
\item[(i)] From a heuristic point-of-view it is useful to consider a Riemann-Hilbert problem involving maps from $\bC^*$ into the group $G$ of Poisson automorphisms of the torus $\bT$. The  above formulation is obtained by evaluating a $t$-dependent automorphism of $\bT$ at the chosen point $\xi\in \bT$. If we  replace the infinite-dimensional group $G$ with the finite-dimensional group $\GL_n(\bC)$ the analogous Riemann-Hilbert problems are familiar in the theory of linear differential equations with irregular singularities, and play an important r{o}le in the theory of Frobenius manifolds  \cite[Lecture 4]{D}. This connection between stability conditions and Stokes phenomena goes back to \cite{BTL}, and will be revisited in \cite{Br3}. In fact, with our conventions, the Riemann-Hilbert problem is most naturally viewed as taking values in the opposite group $G^{\operatorname{op}}$, which we can view heuristically  as the group of automorphisms of the Poisson algebra $\bC[\bT]$. This is why the compositions in \eqref{wcw} are in the anti-clockwise order, and why the Stokes data acts on the left in (RH1).\smallskip

\item[(ii)] 
In Section \ref{rh_intro} we gave a simplified formulation of the Riemann-Hilbert problem  which  considers a single function $\Phi\colon \bC^*\to \bT$ with prescribed discontinuities along active rays.  This  becomes a little tricky to make sense of when the active rays are dense in regions of $\bC^*$, so we prefer the formulation given in Problem \ref{dtsect}, which is modelled on the standard approach in the finite-dimensional case.
We can obtain a solution to Problem \ref{an} from a solution to Problem \ref{dtsect} by defining $\Phi(t)=\Phi_{\bR_{>0}\cdot t} (t)$; Remark \ref{whoopsy} shows that this defines  a holomorphic function  away from the closure of the union of the active rays. Note that Problem \ref{dtsect}  imposes strictly stronger conditions on the resulting function $\Phi(t)$, because  the conditions (RH2) and (RH3) are assumed to hold in half-planes. 
\smallskip

\item[(iii)]We can weaken the conditions in Problem \ref{dtsect} in various ways, and until we have studied more examples in detail it is not possible to be sure exactly what is the correct formulation. For example, we could allow the functions $\Phi_{\rr,\gamma}(t)$ to have poles on the half-plane $\bH_\rr$, or we could replace $\bH_\rr$ by a smaller convex sector of some fixed angle.  We can also consider a variant of Problem \ref{dtsect} where we only assume that the map $\Phi_\rr$ is defined and holomorphic on the intersection of $\bH_\rr$ with some punctured disc $\{t\in \bC^*:|t|<r\}$, and drop condition (RH3) altogether. We shall refer to this last version as the  weak Riemann-Hilbert problem associated to the BPS structure. 
\end{itemize}
\end{remarks}

\subsection{Symmetries of the problem}
\label{sym}
There are a couple of obvious symmetries of the Riemann-Hilbert problem which deserve further comment.
For the first, note that the twisted torus $\bT$ has a canonical involution $\sigma\colon \bT\to \bT$  which acts on twisted characters by $\sigma^*(x_\gamma)=x_{-\gamma}$.  The fixed point set  is the finite subset
\begin{equation}\label{quadref}\bT^\sigma=\{g\colon \Gamma \to \{\pm 1\}: g(\gamma_1+\gamma_2)=(-1)^{\<\gamma_1,\gamma_2\>} g(\gamma_1)\cdot g(\gamma_2)\}\subset \bT,\end{equation}
whose elements are 
quadratic refinements of the form $\<-,-\>$. 

The symmetry property $\Omega(-\gamma)=\Omega(\gamma)$ of the BPS invariants implies that 
\[\bS(-\ell)\circ \sigma =\sigma\circ \bS(\ell).\]
 It follows that  given a collection of functions  $\Phi^{\xi}_\rr(t)$ solving the Riemann-Hilbert problem for the constant term $\xi$, we can generate another solution, this time for the constant term $\sigma(\xi)$, by defining \[{\Phi}^{\sigma(\xi)}_{\rr}(t)=\sigma\circ \Phi^\xi_{-\rr}(-t).\]
 In particular,  if we had uniqueness of solutions, we could  conclude that  whenever $\xi\in \bT^\sigma$ is a quadratic refinement of the form $\<-,-\>$, any solution to the  Riemann-Hilbert problem satisfies
 \[\Phi^\xi_{\rr,\gamma}(t)= \Phi^\xi_{-\rr,-\gamma}(-t).\]
 
 For the second symmetry of the Riemann-Hilbert problem, note that given a BPS structure $(\Gamma,Z,\Omega)$, we can obtain a new BPS structure $(\Gamma,\lambda Z,\Omega)$ by simply multiplying the central charges $Z(\gamma)\in \bC$ by a fixed scalar $\lambda\in \bC^*$. It is then clear that if $\Phi_\rr(t)$ is a system of solutions to the Riemann-Hilbert problem  for the BPS structure $(\Gamma,Z,\Omega)$, then the functions $\Phi_{\lambda\rr}(\lambda t)$ will be a system of solutions to the problem for $(\Gamma,\lambda Z,\Omega)$ with the same constant term. Again, if we had uniqueness of solutions, we could conclude that all solutions to Problem \ref{dtsect} are invariant under simultaneous rescaling of $Z$ and $t$.
%\end{itemize}

\subsection{Null vectors and uniqueness}

Let  $(Z,\Gamma,\Omega)$ be a  BPS structure, and denote by $\bT$ the corresponding twisted torus.
\begin{defn}
An element $\gamma\in \Gamma$ will be called null if it satisfies $\<\alpha,\gamma\>=0$ for all active classes $\alpha\in \Gamma$. A twisted character $x_\gamma\colon \bT\to \bC^*$ corresonding to a null element $\gamma\in \Gamma$  will be called a  coefficient.
\end{defn}

Note that the definition of the wall-crossing automorphisms $\bS(\ell)$ shows that they fix all coefficients:
$\bS(\ell)^*(x_\gamma)=x_\gamma$. This leads to the following partial uniqueness result.

\begin{lemma}
\label{null}
Let $(Z,\Gamma,\Omega)$ be a convergent BPS structure and $\gamma\in \Gamma$ a null element. Then for any solution to the Riemann-Hilbert problem, and any non-active ray $\rr\subset \bC^*$, one has 
$\Psi_{\rr, \gamma}(t)=1$
for all $t\in \bH_\rr$.
\end{lemma}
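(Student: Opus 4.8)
The plan is to exploit the observation made just above the statement: the BPS automorphisms fix every coefficient, $\bS(\ell)^*(x_\gamma)=x_\gamma$ whenever $\gamma$ is null (since $\bS(\ell)$ is the Hamiltonian flow of a combination of the $x_\alpha$ with $\alpha$ active, and $\{x_\alpha,x_\gamma\}=\<\alpha,\gamma\>\,x_\alpha x_\gamma=0$). This trivialises the jumping condition (RH1) for the single character $x_\gamma$, and the conclusion then drops out of a Liouville-type argument.

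First I would show that the functions $\Phi_{\rr,\gamma}=x_\gamma\circ\Phi_\rr$ glue to a single holomorphic map $\Phi_\gamma\colon\bC^*\to\bC^*$. If two non-active rays $\rr_1,\rr_2$ bound a convex sector $\Delta$ in clockwise order, then applying $x_\gamma$ to the relation $\Phi_{\rr_1}(t)=\bS(\Delta)(\Phi_{\rr_2}(t))$ of (RH1) and using $\bS(\Delta)^*(x_\gamma)=x_\gamma$ (as $\bS(\Delta)$ is a composite of maps $\bS(\ell)$, each fixing $x_\gamma$) gives $\Phi_{\rr_1,\gamma}(t)=\Phi_{\rr_2,\gamma}(t)$ for $0<|t|\ll 1$ in the connected open set $\bH_{\rr_1}\cap\bH_{\rr_2}$, hence, by the identity theorem, on all of it. Since the active rays form a countable set, the non-active directions are dense, so the half-planes $\bH_\rr$ over non-active $\rr$ cover $\bC^*$; the equalities just established make $\Phi_\gamma(t):=\Phi_{\rr,\gamma}(t)$ (for any non-active $\rr$ with $t\in\bH_\rr$) well-defined, holomorphic and nowhere vanishing. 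Then $\Psi_\gamma(t)=\exp(Z(\gamma)/t)\cdot\Phi_\gamma(t)\cdot\xi(\gamma)^{-1}$ is a holomorphic map $\bC^*\to\bC^*$ restricting to $\Psi_{\rr,\gamma}$ on each $\bH_\rr$.

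Next I would fix three non-active rays whose half-planes already cover $\bC^*$ — possible since one may take them within a small angle of the directions $0,\tfrac{2\pi}{3},\tfrac{4\pi}{3}$. By (RH2), $\Psi_{\rr,\gamma}(t)\to 1$ as $t\to 0$ inside each of these three half-planes, and taking the smallest of the three radii of control yields $\Psi_\gamma(t)\to 1$ as $t\to 0$ in $\bC^*$; thus $0$ is a removable singularity and $\Psi_\gamma$ extends to an entire function with $\Psi_\gamma(0)=1$. By (RH3), together with $\exp(Z(\gamma)/t)\to 1$ as $t\to\infty$, on each of the three half-planes $|\Psi_{\rr,\gamma}(t)|\leq C|t|^k$ for $|t|\gg 0$, so $\Psi_\gamma$ has polynomial growth on $\bC$; by the Cauchy estimates it is a polynomial. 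Being a polynomial with no zeros on $\bC$ (its values lie in $\bC^*$ on $\bC^*$, and $\Psi_\gamma(0)=1\neq 0$), it is a nonzero constant, which must equal $\Psi_\gamma(0)=1$. Hence $\Psi_{\rr,\gamma}\equiv 1$ for every non-active ray $\rr$. The only real content is the first step; the mild technical point is the covering argument that promotes the behaviour at $0$ and $\infty$ from individual half-planes to all of $\bC^*$, for which one uses precisely that the active rays are countable.
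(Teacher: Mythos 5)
Your argument is correct and is essentially the paper's own proof: use the fact that the $\bS(\ell)$ fix the coefficient $x_\gamma$ to glue the $\Psi_{\rr,\gamma}$ into a single holomorphic function $\Psi_\gamma\colon\bC^*\to\bC^*$, cover $\bC^*$ by finitely many half-planes of non-active rays, and then apply (RH2) and (RH3) with a Liouville-type argument to conclude $\Psi_\gamma\equiv 1$. The only difference is cosmetic: you spell out the polynomial-growth step via Cauchy estimates where the paper phrases it as a zero- and pole-free meromorphic map $\bC\bP^1\to\bC\bP^1$.
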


\begin{proof}
Since coefficients are unchanged by wall-crossing, condition (RH1) shows that the functions $\Psi_{\rr,\gamma}(t)$ for different rays $\rr\subset \bC^*$ piece together to give a single holomorphic function $\Psi_\gamma\colon \bC^*\to \bC^*$. Since we can cover $\bC^*$ by  the half-planes  $\bH_{\rr_i}$ defined by finitely many non-active rays $\rr_i\subset \bC^*$, condition (RH2) shows that this function has a removable singularity at $0\in \bC$ with value $\Psi_\gamma(0)=1$, and condition (RH3) shows that it has at worst polynomial growth at $\infty$. It follows that $\Psi_\gamma$ extends to a meromorphic function $\bC\bP^1\to \bC\bP^1$ which has neither zeroes nor poles on $\bC$. This implies that $\Psi_\gamma(t)$ is constant, which proves the result.
\end{proof}

Recall the definition of an uncoupled BPS structure from Section \ref{defns}. 

\begin{lemma}
\label{brownpaperbag}
Let $(Z,\Gamma,\Omega)$ be a finite, uncoupled, integral  BPS structure. Then the associated Riemann-Hilbert problem has at most one solution.
\end{lemma}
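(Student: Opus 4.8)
The plan is to suppose we are given two solutions $\Phi^{(1)}_{\rr}$ and $\Phi^{(2)}_{\rr}$ of Problem~\ref{dtsect} with the same constant term $\xi\in\bT$, and to show that $\Phi^{(1)}_{\rr,\gamma}=\Phi^{(2)}_{\rr,\gamma}$ for every $\gamma\in\Gamma$ and every non-active ray $\rr\subset\bC^*$; since the twisted characters separate the points of $\bT$, this forces $\Phi^{(1)}_{\rr}=\Phi^{(2)}_{\rr}$. Note first that a finite BPS structure is automatically convergent, so Lemma~\ref{null} is at our disposal. The strategy is to combine the rigidity of the null coordinates coming from Lemma~\ref{null} with a Liouville-type argument of the kind used in its proof.

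First I would exploit the uncoupled hypothesis. Since $\<\alpha,\delta\>=0$ for any two active classes $\alpha,\delta$, every integer multiple of an active class is null; in particular every $\gamma$ with $\DT(\gamma)\neq 0$ is null. Lemma~\ref{null} then shows that for any solution and any non-active ray $\rr$ one has $\Psi_{\rr,\gamma}(t)\equiv 1$, equivalently $\Phi_{\rr,\gamma}(t)=\xi(\gamma)\exp(-Z(\gamma)/t)$, for all such $\gamma$. Thus these coordinates of the solution are completely rigid: they coincide for $\Phi^{(1)}$ and $\Phi^{(2)}$.

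Next I would examine the jumping condition. Fix $\beta\in\Gamma$. I claim that for any convex sector $\Delta$ one has $\bS(\Delta)^*(x_\beta)=x_\beta\cdot F_{\Delta,\beta}$, where $F_{\Delta,\beta}$ is a fixed expression in the twisted characters $x_\gamma$ with $Z(\gamma)\in\Delta$ and $\DT(\gamma)\neq 0$ --- all of which are null, and hence fixed by every $\bS(\ell)$. This is visible from either description of the wall-crossing automorphisms: the Hamiltonian vector field of $\DT(\ell)=\sum_{Z(\gamma)\in\ell}\DT(\gamma)\,x_\gamma$ annihilates $x_\gamma$ for every $\gamma$ active on $\ell$, because the form vanishes on active classes, so its time-$1$ flow fixes these characters and multiplies $x_\beta$ by a function of them; when the structure is integral the same conclusion follows from the product formula of Proposition~\ref{barneyy}. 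Combining this with the previous step, whenever two non-active rays $\rr_1,\rr_2$ bound a convex sector $\Delta$ taken in clockwise order, the value $F_{\Delta,\beta}(\Phi_{\rr_2}(t))$ entering (RH1) is the same for both solutions; hence the ratio $f_\beta:=\Phi^{(1)}_{\rr,\beta}/\Phi^{(2)}_{\rr,\beta}$ matches up across every active ray.

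Finally I would run the Liouville argument. As the BPS structure is finite there are only finitely many active rays, so $\bC^*$ is covered by finitely many half-planes $\bH_{\rr_i}$ with non-active boundary rays; by the previous step (applied for $0<|t|\ll 1$) together with Remark~\ref{whoopsy}, the functions $f_\beta$ on these half-planes agree on overlaps near $0$, hence everywhere by the identity theorem, and so glue to a holomorphic map $f_\beta\colon\bC^*\to\bC^*$. Writing $f_\beta(t)$ as the quotient of $\exp(Z(\beta)/t)\,\Phi^{(i)}_{\rr,\beta}(t)$ for $i=1,2$ and invoking (RH2) gives $f_\beta(t)\to 1$ as $t\to 0$, so $f_\beta$ extends holomorphically over $0$ with $f_\beta(0)=1$; condition (RH3) gives polynomial growth at $\infty$, so $f_\beta$ is a polynomial, and being nowhere vanishing it must be the constant $1$. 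Hence $\Phi^{(1)}_{\rr,\beta}=\Phi^{(2)}_{\rr,\beta}$ for all $\beta$, which is the desired uniqueness. The one point I expect to require real care is the middle step --- identifying exactly which coordinates of the solution enter the jump of $x_\beta$ and verifying that these are precisely the null coordinates already pinned down; granted that, the remainder is a routine reprise of the proof of Lemma~\ref{null}.
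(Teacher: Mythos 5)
Your proposal is correct and follows essentially the same route as the paper: the uncoupled hypothesis makes every class entering the jump of $x_\beta$ null, Lemma \ref{null} pins those coordinates down, so the ratio of two solutions glues to a holomorphic function $\bC^*\to\bC^*$ which is forced to be $1$ by (RH2), (RH3) and the Liouville argument from Lemma \ref{null}. The only (minor) difference is that you justify the shape of the jump directly from the Hamiltonian flow rather than via the birational formula \eqref{garage} of Remark \ref{wet}, which has the small advantage of not invoking integrality.
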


\begin{proof}
Note that uncoupled BPS structures are in particular generic, so Remark \ref{wet} applies.
Fix an arbitrary class $\beta\in \Gamma$. The uncoupled condition implies that all classes $\gamma\in \Gamma$ appearing in the product on the right-hand side of \eqref{garage} are null. Lemma \ref{null} shows that for these classes  the corresponding functions
\[\Phi_{\rr,\gamma}(t)=\exp(-Z(\gamma)/t)\cdot \xi(\gamma)\]
are uniquely determined. It follows that if we have two systems of solutions \[\Phi^{(i)}_{\rr,\beta}(t)\colon \bH_\rr\to \bC^*, \quad i=1,2,\] to the Riemann-Hilbert problem, then the ratios\[q_{\rr,\beta}(t)=\Phi^{(2)}_{\rr,\beta}(t)\cdot (\Phi^{(1)}_{\rr,\beta}(t))^{-1} \colon \bH_\rr\to \bC^*\]
piece together to give a single holomorphic function $q_\beta\colon \bC^*\to \bC^*$. Arguing exactly as in Lemma \ref{null},  we can use conditions (RH2) and (RH3) to conclude that $q_\beta(t)=1$ for all $t\in \bC^*$, which proves the claim.
\end{proof}

\begin{remark}
Let $(\Gamma,Z,\Omega)$ be a convergent BPS structure, and consider the doubled BPS structure $(\Gamma\oplus \Gamma^\dual,Z,\Omega)$ of  Section \ref{double}.  For each class $\gamma\in \Gamma$ the  corresponding element \[\gamma_D=(\gamma, \<-,\gamma\>)\in \Gamma\oplus \Gamma^\dual\]
is  null, because, by the formula \eqref{job},   it is orthogonal to any class of the form $(\alpha,0)\in \Gamma\oplus \Gamma^\dual$. Lemma \ref{null} therefore implies that any solution to the Riemann-Hilbert problem for the double satisfies $\Psi_{\gamma_D,\rr}(t)=1$. This implies that
\[\Psi_{\rr, (0,\<\gamma,-\>)}(t)=\Psi_{\rr,(\gamma,0)}(t),\]
 for all  non-active rays $\rr\subset \bC^*$, and all $t\in \bH_r$. 
 
  In this way one sees that  if a BPS structure has a non-degenerate form $\<-,-\>$, then solving the Riemann-Hilbert problem for the doubled BPS structure is precisely equivalent to solving the problem for the original BPS structure. However, when the form $\<-,-\>$ is degenerate,  a solution to the Riemann-Hilbert problem for the doubled BPS structure contains strictly more information than a solution to the  original problem. We will see an example of this in Section \ref{a1} below. \end{remark}

\subsection{Tau functions} 

Consider a framed, miniversal variation of BPS structures $(\Gamma,Z_p,\Omega_p)$  over a complex manifold $M$.
For the relevant definitions the reader can either consult the summary in Section \ref{vbpss}, or the full treatment in Appendix \ref{two}. There is a holomorphic map \[\pi \colon M\to \Hom_\bZ(\Gamma,\bC), \quad p\mapsto Z_p,\]
which we call the period map. The miniversal assumption ensures that the derivative
\begin{equation}
\label{friday}(D\pi)_p \colon T_p M\to \Hom_{\bZ}(\Gamma,\bC)\end{equation}
at each point $p\in M$ is an isomorphism. We  can  therefore identify the tangent space $T_p M$    with the  vector space $\Hom_\bZ(\Gamma,\bC)$.   The  form $\<-,-\>$  then induces a Poisson structure on $M$.

More explicitly, we can choose a basis $(\gamma_1,\cdots,\gamma_n)\subset \Gamma$ and use the functions \[z_i(p)=Z_p(\gamma_i)\colon M\to \bC\]
as  co-ordinates on $M$. The dual of the map \eqref{friday}  identifies  $\gamma_i\in \Gamma$ with the element $dz_i\in T_p^*M$, and the Poisson structure   has the Darboux form
\[\{z_i,z_j\}=\epsilon_{ij}, \qquad \epsilon_{ij}=\<\gamma_i,\gamma_j\>.\]

Let us  consider a family of solutions \[\Psi_\rr=\Psi_\rr(p,t)\colon M\times \bC^*\to\bT_+\]
 to the  Riemann-Hilbert problems associated to the BPS structures $(\Gamma,Z_p,\Omega_p)$. For each ray $\rr\subset \bC^*$ we assume that $\Psi_\rr(p,t)$ is a piecewise holomorphic function, with discontinuities at points $p\in M$ where the ray $\rr\subset \bC^*$ is active in the BPS structure $(\Gamma,Z_p,\Omega_p)$. Differentiating with respect to $t$ and translating to the identity $1\in \bT_+$ we get a map
\[\Psi_\rr^{-1}\cdot \frac{d\Psi_\rr}{dt}\colon M\times \bC^*\to \Hom_\bZ(\Gamma,\bC).\]
Composing with  the inverse of \eqref{friday}, this can be viewed as a vector field $V(t)$ on $M$ depending on $t\in \bC^*$. A $\tau$-function for the family of solutions $\Psi_\rr(p,t)$  is a piecewise holomorphic function \[\tau_\rr=\tau_\rr(p,t)\colon M\to \bC^*,\]
 such that $V(t)$ is the Hamiltonian vector field of the function $(2\pi i)\cdot \log \tau_\rr$. In terms of the co-ordinates $z_i=Z(\gamma_i)$ described above,  the condition is that
 \begin{equation}
 \label{nick2}\frac{1}{2\pi i}\cdot\frac{\partial \log \Psi_{\rr,\gamma_j}}{\partial t}=\sum_i \epsilon_{ij}  \frac{\partial \log \tau_\rr}{\partial z_i}.\end{equation}
By the second symmetry property of Section \ref{sym} it is natural to also impose the condition that  $\tau$ is invariant under simultaneous  rescaling of $Z$ and $t$. In the case when the form $\<-,-\>$ is non-degenerate this is enough to determine the  $\tau$-function uniquely up to multiplication by an element of $\bC^*$.

%%%%%%%%%%%%%%%%%%%%%%%%%%%%%%%%%%%%%%%%%%%%%%%%%
%%%%%%%%%%%%%%%%%%%%%%%%%%%%%%%%%%%%%%%%%%%%%%%%%
%%%%%%%%%%%%%%%%%%%%%%%%%%%%%%%%%%%%%%%%%%%%%%%%%
%%%%%%%%%%%%%%%%%%%%%%%%%%%%%%%%%%%%%%%%%%%%%%%%%

\section{Explicit solutions in the finite, uncoupled case}
\label{sol}

In this section we show how to solve the Riemann-Hilbert problem associated to a finite, uncoupled, integral BPS structure, and compute the $\tau$-function associated to a variation of such structures. The situation considered here corresponds to the case of `mutually local corrections' in \cite{GMN1}. The inspiration for our solution  comes from a calculation of Gaiotto \cite[Section 3]{G}.

\subsection{Doubled A$_1$ example}
\label{a1}

The following BPS structure arises from  the A$_1$ quiver, which consists of a single vertex and no arrows. It depends on a parameter $z\in \bC^*$.

\begin{itemize}
\item[(i)] The lattice $\Gamma=\bZ\cdot \gamma$ has rank one, and  thus $\<-,-\>=0$;
\smallskip
\item[(ii)] The central charge $Z\colon \Gamma\to \bC$ is determined by $Z(\gamma)=z\in \bC^*$;
\smallskip
\item[(iii)] The only non-vanishing BPS invariants are $\Omega(\pm\gamma)=1$. 
\end{itemize}

The Riemann-Hilbert problem associated to this BPS structure is trivial since all elements of $\Gamma$ are null. Let us instead consider the  double  of this BPS structure \[(\Gamma_D,Z,\Omega), \quad \Gamma_D=\Gamma\oplus\Gamma^\dual.\]
Let $\gamma^\dual\in \Gamma^\dual$ be the unique generator satisfying $\gamma^\dual(\gamma)=1$. Note that the definition  \eqref{job} shows that the skew-symmetric form $\<-,-\>$ on $\Gamma_D$  satisfies $\<\gamma^\dual,\gamma\>=1$.  

To define the Riemann-Hilbert problem for the doubled BPS structure we must first choose a constant term $\xi_D\in \bT_D$, where $\bT_D$ is the twisted torus corresponding to the lattice $\Gamma_D$.
For simplicity we take $\xi_D(\gamma^\vee)=1$, and write  $\xi=\xi_D(\gamma)\in \bC^*$.

The only active rays  are $\ell_\pm=\pm \bR_{>0}\cdot z$, and 
 Lemma \ref{null} shows that
\[\Phi_{\rr,\gamma}(t)=\exp(-z/t)\cdot \xi\]
for any non-active ray $\rr\subset \bC^*$.
The non-trivial part of the Riemann-Hilbert  problem for the doubled BPS structure consists of the functions
\[ \Psi_\rr(t)=\Psi_{\rr, \gamma^\dual}(t)\colon \bH_\rr\to \bC^*.\]

It follows from Remark \ref{whoopsy}  that the functions $\Psi_{\rr}(t)$ for non-active rays $\rr\subset \bC^*$ lying in the same component of $\bC\setminus \bR\cdot z$ are analytic continuations of each other. Thus we obtain just two holomorphic functions
\[\Psi_\pm(t)\colon \bC^*\setminus  i\ell_{\pm}   \to  \bC^*,\]
 corresponding to non-active rays lying in the half-planes $\pm \Im(z/t)>0$. 
 Using Remark \ref{wet}, the Riemann-Hilbert problem for the doubled BPS structure can therefore be restated as follows.

\begin{problem}
\label{eps}
 Find holomorphic functions $\Psi_\pm\colon \bC^*\setminus i\ell_\pm \to \bC^*$
  such that:
\begin{itemize}
\item[(i)]
%For $t\in \bH_{\ell_+}$ we have
%\[x_-(t)= x_+(t) \cdot (1-\xi\cdot e^{-2\pi iz/t}),\]
%whereas for $t\in \bH_{\ell_-}$
%\[x_-(t)= x_+(t) \cdot (1-\xi^{-1}\cdot e^{2\pi iz/t}).\]
There are relations
\begin{equation}
\label{rabbit}\Psi_+(t)=\begin{cases} \Psi_-(t) \cdot (1-\xi^{+1}\cdot e^{-z/t})^{-1}&\mbox{ if }t\in \bH_{\ell_+},\\ \Psi_-(t) \cdot (1-\xi^{-1}\cdot e^{+z/t})^{-1}&\mbox{ if }t\in \bH_{\ell_-}.\end{cases}\end{equation}

\item[(ii)] As $t\to 0$ in $\bC^*\setminus i\ell_{\pm}$ we have $\Psi_{\pm}(t)\to 1$.

\item[(iii)] There exists $k>0$ such that
\[|t|^{-k} < |\Psi_{\pm}(t)|<|t|^k\]
as $t\to \infty$ in $\bC^*\setminus i\ell_{\pm}$.
\end{itemize}
\end{problem}

To understand condition (i) note that if $t\in \bH_{\ell_+}$ then $t$ lies in the domains of definition $\bH_\rr$ of the functions $\Psi_\rr$ corresponding to sufficiently small deformations of the ray $\ell_+$. Thus \eqref{garage} applies to the ray $\ell_+$ and we obtain the first of  the relations \eqref{rabbit}. The second follows similarly from \eqref{garage} applied to the opposite ray $\ell_-$.

\subsection{Solution in the doubled A$_1$ case}

The formula
\begin{equation}
\label{cheedale}\Lambda(w)=\frac{e^w \cdot\Gamma(w)}{\sqrt{2\pi}\cdot w^{w-\half}}\end{equation}
defines a  meromorphic  function of $w\in \bC^*$, which is multi-valued due to the   factor
\[w^{w-\half}=\exp\big({(w-\half)\log w}\big).\]
Since $\Gamma(w)$ is meromorphic on $\bC$ with poles only at the non-positive integers, it follows that
$\Lambda(w)$ is holomorphic on $\bC^*\setminus\bR_{<0}$. We specify it uniquely by taking the principal branch of $\log$.

The Stirling expansion \cite[Section 12.33]{WW} gives an asymptotic expansion
\begin{equation}\label{stirling}\log \Lambda(w)
\sim  \sum_{g=1}^{\infty} \frac{B_{2g}}{2g(2g-1)}w^{1-2g},
\end{equation}
where $B_{2g}$ denotes the $(2g)$th Bernoulli number. This expansion is valid as $w\to \infty$ in the complement of a closed sector containing the ray  $\bR_{<0}$. It implies in particular that $\Lambda(w)\to 1$.

\begin{prop}
\label{twotier}
 When $\xi=1$, Problem \ref{eps} has a unique solution, namely 
\[\Psi_+(t)=\Lambda\Big(\frac{z}{2\pi i t}\Big), \qquad \Psi_-(t)=\Lambda\Big(\frac{-z}{2\pi i t}\Big)^{-1}.\]
\end{prop}

\begin{proof}
Note that the function $\Psi_+(t)$ is indeed holomorphic on the domain $\bC^*\setminus i\ell_+$  as required, because if we set $w=z/2\pi it$ then \[t\in i\ell_+ \iff w\in \bR_{<0}.\]
Similarly $\Psi_-(t)$ is holomorphic on $\bC^*\setminus i\ell_-$.
%\quad t\in \bR_{>0}\cdot z\iff w\in \bR_{>0}\cdot i.\]
The
Euler reflection formula  gives
\[\Gamma(w)\cdot \Gamma(1-w) =\frac{\pi}{\sin(\pi w)},\quad w\in \bC\setminus\bZ.\]
Since $\Gamma(n)=(n-1)!$ for $n\in \bZ_{>0}$, it follows that  $\Gamma(w)$ is nowhere vanishing. The same is therefore true of  $\Lambda(w)$.
The reflection formula also implies that for $w\in \bC\setminus \bZ$
\[\Lambda(w)\cdot \Lambda(-w)=\frac{1}{2\pi}\cdot  \frac{\pi }{\sin(\pi w)}\cdot e^{-( (w-\half)\log(w)+(-w+\half)\log(-w))},\]
where we used $\Gamma(1-w)=(-w)\cdot \Gamma(-w)$.
For the principal branch of $\log$ we have \[\log(w)=\log(-w)\pm \pi i\text{ when }\pm \Im(w)>0.\]
Thus we conclude that
\begin{equation}
\label{luminy}\Lambda(w)\cdot \Lambda(-w)=\frac{i\cdot e^{-\pi i(w-\half)}} {e^{\pi iw}-e^{-\pi i w}}=(1-e^{2\pi iw})^{-1}\end{equation}
when $\Im(w)>0$.
Note that if $w=z/2\pi i t$ then \[ t\in \bH_{\ell_-}\iff \Im(w)>0,\]
so we get the second of the relations \eqref{rabbit}. The other follows in the same way.
Property (ii) is immediate from the Stirling expansion \eqref{stirling}. Property (iii) is a simple consequence of the fact that $\Gamma(w)$ has a simple pole at $w=0$. Finally, the uniqueness statement follows from Lemma \ref{brownpaperbag}.%This implies that $\Lambda(w)\sim w^{-\frac{1}{2}}$ as $w\to 0$ and hence that $x_\pm(t)\sim t^{\mp \frac{1}{2}}$ as $t\to \infty$.
\end{proof}

\subsection{The finite uncoupled case}

In the case of a finite, uncoupled,  integral BPS structure we can construct a unique solution to the Riemann-Hilbert problem by superposing the solutions from the previous section.

\begin{thm}
\label{hd}
Let $(Z,\Gamma,\Omega)$ be a finite, uncoupled,  integral BPS structure. Suppose that $\xi\in \bT$ satisfies $\xi(\gamma)=1$ for all active classes $\gamma\in \Gamma$. Then the  corresponding Riemann-Hilbert problem has a unique solution, which associates to a non-active ray $\rr$ the function
\begin{equation}
\label{blbl}
\Psi_{\rr,\beta}(t)= \prod_{Z(\gamma)\in i\bH_\rr} \Lambda\bigg(\frac{Z(\gamma)}{2\pi i t}\bigg)^{\Omega(\gamma) \<\beta,\gamma\>}
\end{equation}
 where the product is taken over the finitely many active classes $\gamma\in \Gamma$ for which  $Z(\gamma)\in i\bH_\rr$.
\end{thm}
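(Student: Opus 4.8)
The plan is to reduce the finite, integral, uncoupled case to the doubled $A_1$ case already solved in Proposition \ref{twotier} by a superposition argument, exploiting that uncoupledness makes all the relevant Hamiltonian flows commute. First I would observe that, since the BPS structure is uncoupled, for any fixed $\beta\in\Gamma$ and any active class $\gamma$ the exponent $\Omega(\gamma)\<\beta,\gamma\>$ involves only the pairing of $\beta$ against active classes, and the active classes $\gamma$ themselves are null (they pair trivially with each other), so the BPS automorphism $\bS(\ell)$ acts on $x_\beta$ multiplicatively by the factor $\prod_{Z(\gamma)\in\ell}(1-x_\gamma)^{\Omega(\gamma)\<\beta,\gamma\>}$ (Remark \ref{wet}), while on each $x_\gamma$ with $\gamma$ active it acts trivially. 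By Lemma \ref{null} and the hypothesis $\xi(\gamma)=1$, for such $\gamma$ we have $\Phi_{\rr,\gamma}(t)=\exp(-Z(\gamma)/t)$ on every non-active ray, so the jump factor is the completely explicit function $\bigl(1-\exp(-Z(\gamma)/t)\bigr)^{\Omega(\gamma)\<\beta,\gamma\>}$, matching precisely the shape of the jumps in Problem \ref{eps}.

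Next I would verify that the proposed formula \eqref{blbl} satisfies the three conditions (RH1)--(RH3). For (RH1), consider two non-active rays $\rr_1,\rr_2$ bounding a convex sector $\Delta$ in clockwise order; the classes $\gamma$ that contribute to $\bS(\Delta)$ are exactly those active $\gamma$ with $Z(\gamma)\in\Delta$, equivalently those for which $\gamma$ lies in $i\bH_{\rr_1}$ but not in $i\bH_{\rr_2}$ (or vice versa, with the convention that $Z(\gamma)$ crosses from one half-plane to the other). For each such single ray the ratio $\Psi_{\rr_1,\beta}/\Psi_{\rr_2,\beta}$ picks up exactly the factor $\Lambda(Z(\gamma)/2\pi i t)^{\Omega(\gamma)\<\beta,\gamma\>}\Lambda(-Z(\gamma)/2\pi i t)^{-\Omega(\gamma)\<\beta,\gamma\>}$ for $\gamma$ exiting the half-plane (with signs bookkept exactly as in the proof of Proposition \ref{twotier}), and by the identity \eqref{luminy}, $\Lambda(w)\Lambda(-w)=(1-e^{2\pi i w})^{-1}$, this equals $\bigl(1-\exp(-Z(\gamma)/t)\bigr)^{\Omega(\gamma)\<\beta,\gamma\>}$, which is the required jump. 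The crucial point is that, by uncoupledness, the jumps along distinct active rays inside $\Delta$ commute, so the product $\bS(\Delta)=\prod_{\ell\subset\Delta}\bS(\ell)$ really is realized factor by factor by the product over $\gamma$ in \eqref{blbl}. Condition (RH2) follows because each $\Lambda$-factor tends to $1$ as $t\to 0$ in $\bH_\rr$ (here one uses that for the finitely many active $\gamma$ with $Z(\gamma)\in i\bH_\rr$, the argument $Z(\gamma)/2\pi i t$ tends to $\infty$ staying away from $\bR_{<0}$, so the Stirling asymptotic \eqref{stirling} applies), and condition (RH3) follows from the simple pole of $\Gamma$ at $0$, exactly as in Proposition \ref{twotier}, since the product in \eqref{blbl} is finite. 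Uniqueness is immediate from Lemma \ref{brownpaperbag}.

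The main obstacle I anticipate is not any single analytic estimate but rather the careful bookkeeping of which half-plane each $Z(\gamma)$ lies in and the accompanying sign conventions: one must match the condition ``$Z(\gamma)\in i\bH_\rr$'' in \eqref{blbl} with the condition ``$Z(\gamma)/t$ has positive real part'' used in defining the jumps, track how $Z(\gamma)$ and $Z(-\gamma)=-Z(\gamma)$ are distributed (the symmetry $\Omega(-\gamma)=\Omega(\gamma)$ means both appear, in opposite half-planes), and confirm that the branch of $\log$ implicit in $\Lambda$ is consistent across the relevant domains. Concretely, for a non-active ray $\rr$ the domain of $\Psi_{\rr,\beta}$ should be (a neighbourhood of) $\bH_\rr$, and each factor $\Lambda(Z(\gamma)/2\pi i t)$ with $Z(\gamma)\in i\bH_\rr$ is holomorphic there precisely because $Z(\gamma)/2\pi i t\notin\bR_{<0}$ on $\bH_\rr$; this is the place where the stated restriction of the product to $Z(\gamma)\in i\bH_\rr$ is essential and must be checked against the geometry. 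Once this indexing is pinned down, the rest is a direct application of \eqref{luminy}, \eqref{stirling}, and Lemma \ref{brownpaperbag}, with the commativity of the jumps supplied by the uncoupled hypothesis.
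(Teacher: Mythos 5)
Your proposal is correct and follows essentially the same route as the paper: holomorphy of each $\Lambda$-factor on $\bH_\rr$ because $Z(\gamma)/2\pi i t\notin\bR_{<0}$ there, the reflection identity \eqref{luminy} to produce the jump factor $(1-e^{-Z(\gamma)/t})^{\pm\Omega(\gamma)\<\beta,\gamma\>}$ matching \eqref{garage} via Lemma \ref{null}, (RH2) and (RH3) as in Proposition \ref{twotier}, and uniqueness from Lemma \ref{brownpaperbag}. The only wrinkle is the sign bookkeeping you already flag: in the ratio across an active ray $\ell$ the factor for $-\gamma$ (with $Z(-\gamma)\in-\ell$) sits in the denominator with exponent $-\Omega(\gamma)\<\beta,\gamma\>$, so the two $\Lambda$'s combine with the \emph{same} exponent into $\bigl(\Lambda(w)\Lambda(-w)\bigr)^{\Omega(\gamma)\<\beta,\gamma\>}$, and \eqref{luminy} must be applied with the argument lying in the upper half-plane, exactly as in the paper's computation.
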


\begin{proof}
Note first that the expression \eqref{blbl} is holomorphic and non-zero on $\bH_\rr$ because in each factor both $Z(\gamma)/i$ and $t$ lie in $\bH_\rr$, so the argument of $\Lambda$ does not lie in $\bR_{<0}$.  The properties (RH2) and (RH3) then follow immediately as in the proof of Proposition \ref{twotier}. 
Consider two non-active rays $\rr_-$ and $\rr_+$ obtained by small  perturbations, anti-clockwise and clockwise respectively, of an  active ray $\ell$. Then $\ell\subset i\bH_{\rr_+}$ whereas $-\ell\subset i\bH_{\rr_-}$. Assume that  $t\in \bH_{\rr_-}\cap \bH_{\rr_+}$. Then  $t\in \bH_\ell$ and hence $-Z(\gamma)/2\pi it$ lies in the upper half-plane whenever $Z(\gamma)\in \ell$. Using \eqref{luminy} we therefore obtain 
\[\frac{\Phi_{\rr_+,\beta}(t)}{\Phi_{\rr_-,\beta}(t)}=\prod_{Z(\gamma)\in \ell} \bigg(\Lambda\Big(\frac{Z(\gamma)}{2\pi i t}\Big)\cdot\Lambda\Big(\frac{-Z(\gamma)}{2\pi i t}\Big)\bigg)^{\Omega(\gamma)\<\beta,\gamma\>}= \prod_{Z(\gamma)\in \ell} (1-e^{-Z(\gamma)/t} )^{\Omega(\gamma) \<\gamma,\beta\> }\]
 This is precisely the condition \eqref{garage} since  Lemma \ref{null} and the assumption on $\xi$ implies that
$\Phi_{\gamma}(t)=\exp(-Z(\gamma)/t)$ whenever $\Omega(\gamma)\neq 0$. 
\end{proof}

Using the Stirling expansion we obtain an asymptotic expansion
\begin{equation}\log \Phi_{\rr,\beta}(t)\sim \frac{Z(\beta)}{t}-2\pi i \theta(\beta)+\sum_{g\geq 1} \sum_{\gamma\in \Gamma}
 \frac{\Omega(\gamma)\<\beta,\gamma\>B_{2g}}{4g\,(2g-1)}\bigg(\frac{2\pi i t}{Z(\gamma)}\bigg)^{2g-1},\end{equation}
valid as $t\to 0$ in the half-plane $\bH_\rr$. We have set $\xi(\beta)=\exp(2\pi i \theta(\beta))$. The extra factor of 2 in the denominator arises because  in \eqref{blbl} we take a product over half the classes in $\Gamma$.  Note that this  expansion is independent of the choice of  ray $\rr\subset \bC^*$.

\subsection{Tau function in the uncoupled case}

Consider the expression
\[\Upsilon(w)=\frac{e^{-\zeta'(-1)}\, e^{\frac{3}{4}w^2} \,G(w+1)}{(2\pi)^{\frac{w}{2}}\,w^{\frac{w^2}{2}}},\]
where $G$ is the Barnes $G$-function  \cite{Barnes}, \cite[Appendix]{Vo}, and $\zeta(s)$ is the Riemann zeta function. It defines a holomorphic and nowhere vanishing function on $\bC^*\setminus \bR_{<0}$ which we specify uniquely by defining the factor $w^{w^2/2}$ using the principal value of $\log$.   The asymptotic expansion of $\Upsilon(w)$ is
\begin{equation}
\label{deb}\log \Upsilon(w)\sim -\frac{1}{12} \log(w) + \sum_{g\geq 2} \frac{B_{2g}}{2g(2g-2)} w^{-(2g-2)},\end{equation}
valid as $w\to \infty$ in the complement of any sector in $\bC^*$ containing the ray $\bR_{<0}$. This can be found for example in  \cite[Section 15]{Barnes} or \cite[Appendix]{Vo},
although note that Barnes uses a different indexing for the Bernoulli numbers, and refers to the real number
\[A=e^{\frac{1}{12}} \cdot e^{-\zeta'(1)}\]
 as the Glaisher-Kinkelin constant.

\begin{lemma}
\label{oldy}
There is an identity
\[\frac{d}{dw} \log \Upsilon(w)=w\frac{d}{dw} \log \Lambda(w).\]
\end{lemma}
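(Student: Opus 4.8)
The plan is to verify the identity by direct logarithmic differentiation of the two explicitly given expressions, reducing everything to the classical difference equation for the Barnes $G$-function and the standard formula $\Gamma'(w)/\Gamma(w) = -\gamma + \sum_{n\geq 0}\left(\tfrac{1}{n+1} - \tfrac{1}{w+n}\right)$ for the digamma function. First I would compute the right-hand side: from \eqref{cheedale} one has $\log\Lambda(w) = w + \log\Gamma(w) - \tfrac12\log(2\pi) - (w-\tfrac12)\log w$, so
\[
\frac{d}{dw}\log\Lambda(w) = 1 + \psi(w) - \log w - \frac{w - \tfrac12}{w} = \psi(w) - \log w + \frac{1}{2w},
\]
where $\psi = \Gamma'/\Gamma$ is the digamma function. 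Multiplying by $w$ gives $w\,\tfrac{d}{dw}\log\Lambda(w) = w\psi(w) - w\log w + \tfrac12$.

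Next I would compute the left-hand side. From the definition of $\Upsilon$,
\[
\log\Upsilon(w) = -\zeta'(-1) + \tfrac34 w^2 + \log G(w+1) - \tfrac{w}{2}\log(2\pi) - \tfrac{w^2}{2}\log w,
\]
so
\[
\frac{d}{dw}\log\Upsilon(w) = \tfrac32 w + \frac{d}{dw}\log G(w+1) - \tfrac12\log(2\pi) - w\log w - \tfrac{w}{2}.
\]
The key input is the functional equation $G(w+1) = \Gamma(w)\,G(w)$, which upon logarithmic differentiation yields $\tfrac{d}{dw}\log G(w+1) = \psi(w) + \tfrac{d}{dw}\log G(w)$; more usefully, combined with the known series or integral representation for $\tfrac{d}{dw}\log G(w+1)$, one has the classical identity
\[
\frac{d}{dw}\log G(w+1) = \tfrac12\log(2\pi) + \tfrac12 - w + w\,\psi(w).
\]
(This last formula is the heart of the matter and can itself be derived from the Weierstrass product for $G$, or quoted from Barnes \cite{Barnes}; I would cite it.) Substituting it in gives $\tfrac{d}{dw}\log\Upsilon(w) = \tfrac32 w + \tfrac12 - w + w\psi(w) - w\log w - \tfrac{w}{2} = w\psi(w) - w\log w + \tfrac12$, which matches $w\,\tfrac{d}{dw}\log\Lambda(w)$ exactly.

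\emph{Main obstacle.} The only nontrivial ingredient is the derivative formula $\tfrac{d}{dw}\log G(w+1) = \tfrac12\log(2\pi) + \tfrac12 - w + w\psi(w)$; everything else is bookkeeping with $\log$, the chain rule, and the digamma function. One must be slightly careful that the branch conventions (principal value of $\log$ on $w^{w-1/2}$ and on $w^{w^2/2}$, holomorphicity on $\bC^*\setminus\bR_{<0}$) are consistent, so that the formal differentiations above are legitimate on the common domain $\bC^*\setminus\bR_{<0}$ — but since both sides are holomorphic there and the computation is purely local, no global subtlety arises. Alternatively, and perhaps cleanest for the write-up, one could instead differentiate the known asymptotic expansions \eqref{stirling} and \eqref{deb} term by term: $\tfrac{d}{dw}\log\Upsilon(w) \sim -\tfrac{1}{12w} + \sum_{g\geq 2}\tfrac{-B_{2g}}{2g}w^{-(2g-1)}$ while $w\tfrac{d}{dw}\log\Lambda(w) \sim \sum_{g\geq 1}\tfrac{-B_{2g}(2g-1)}{2g(2g-1)}w^{1-2g}\cdot\tfrac{w}{w}$... — but this only proves the identity up to an additive constant and a function with vanishing asymptotic expansion, so the direct computation via the $G$-function functional equation is the safer route and the one I would present.
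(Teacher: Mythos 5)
Your proposal is correct and follows essentially the same route as the paper: both proofs hinge on the Barnes identity $\tfrac{d}{dw}\log G(w+1)=\tfrac12\log(2\pi)+\tfrac12-w+w\psi(w)$ (cited from \cite{Barnes}), derive $\tfrac{d}{dw}\log\Upsilon(w)=\tfrac12+w\psi(w)-w\log w$, and match it against the logarithmic derivative of \eqref{cheedale}. Your write-up simply carries out the bookkeeping more explicitly than the paper does.
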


\begin{proof}
Note that this is obvious at the level of the  asymptotic expansions. For the proof we use the identity
\[\frac{d}{dw} \log G(w+1)=  \frac{1}{2} \log(2\pi)+\frac{1}{2} - w+w\frac{d}{dw}\log \Gamma(w)\]
which can be found in  \cite[Section 12]{Barnes}
(see also \cite[Formula (A.13)]{Vo}) to get
\[\frac{d}{dw} \log \Upsilon(w)=\frac{1}{2} +w\frac{d}{dw}\log \Gamma(w)-w\log w.\]
The result then follows from \eqref{cheedale}  by taking $\log$ and differentiating.
\end{proof}

In the case of variations of BPS structures satisfying the conditions of Theorem \ref{hd} the following result gives a natural choice of $\tau$-function. 
\begin{thm}
Let $(\Gamma,Z_p,\Omega_p)$ be a framed, miniversal variation of finite, uncoupled,  integral BPS structures over a complex manifold $M$. Given a ray $\rr\subset \bC^*$, the function
\begin{equation}\label{holo}\tau_\rr(p;t)=\prod_{Z_p(\gamma)\in i\bH_\rr} \Upsilon\bigg(\frac{Z(\gamma)}{2\pi i t}\bigg)^{\Omega(\gamma)}\end{equation}
is a   $\tau$-function for the  family of solutions of Theorem \ref{hd}. %to the corresponding RH problems is where again the product is taken over the finitely many active classes $\gamma\in \Gamma$ for which $Z_p(\gamma)\in \bH_\rr$.  
\end{thm}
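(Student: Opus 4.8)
The plan is to verify directly that the proposed function $\tau_\rr(p;t)$ satisfies the defining relation of a $\tau$-function, namely equation~\eqref{hungry},
\[\frac{1}{2\pi i}\cdot\frac{\partial \log \Psi_{\rr,\gamma_i}}{\partial t}=\sum_j \epsilon_{ij}\,\frac{\partial \log \tau_\rr}{\partial z_j},\qquad \epsilon_{ij}=\<\gamma_i,\gamma_j\>,\]
together with the scaling-invariance condition. The left-hand side is computable from the explicit formula \eqref{blbl} of Theorem~\ref{hd}: taking $\log$ gives $\log \Psi_{\rr,\gamma_i}(t)=\sum_{Z(\gamma)\in i\bH_\rr}\Omega(\gamma)\<\gamma_i,\gamma\>\log\Lambda\!\big(Z(\gamma)/2\pi i t\big)$, so differentiating in $t$ and using the chain rule produces a sum over active $\gamma$ with $\Omega(\gamma)\<\gamma_i,\gamma\>$ times $\big(d/dw\,\log\Lambda\big)(w)\cdot\big(-Z(\gamma)/2\pi i t^2\big)$, where $w=Z(\gamma)/2\pi i t$.

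For the right-hand side, one differentiates $\log\tau_\rr(p;t)=\sum_{Z_p(\gamma)\in i\bH_\rr}\Omega(\gamma)\log\Upsilon\!\big(Z_p(\gamma)/2\pi i t\big)$ with respect to the coordinate $z_j=Z_p(\gamma_j)$. Writing each active class as $\gamma=\sum_k a_k\gamma_k$ so that $Z_p(\gamma)=\sum_k a_k z_k$, the derivative $\partial Z_p(\gamma)/\partial z_j$ is simply the $j$-th coordinate $a_j$ of $\gamma$, i.e. the pairing $\gamma$ against the dual basis. Hence $\partial\log\tau_\rr/\partial z_j=\sum_\gamma \Omega(\gamma)\,a_j(\gamma)\,\big(d/dw\,\log\Upsilon\big)(w)\cdot\big(1/2\pi i t\big)$. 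Multiplying by $\epsilon_{ij}=\<\gamma_i,\gamma_j\>$ and summing over $j$ collapses $\sum_j \<\gamma_i,\gamma_j\>a_j(\gamma)$ to $\<\gamma_i,\gamma\>$, by bilinearity of the intersection form. So the right-hand side becomes $\sum_\gamma \Omega(\gamma)\<\gamma_i,\gamma\>\,\big(d/dw\,\log\Upsilon\big)(w)\cdot\big(1/2\pi i t\big)$.

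Comparing the two sides, matching succeeds precisely when, for $w=Z(\gamma)/2\pi i t$,
\[\frac{1}{2\pi i}\cdot\Big(\tfrac{d}{dw}\log\Lambda\Big)(w)\cdot\Big(\frac{-Z(\gamma)}{2\pi i t^2}\Big)=\Big(\tfrac{d}{dw}\log\Upsilon\Big)(w)\cdot\frac{1}{2\pi i t},\]
and since $-Z(\gamma)/2\pi i t^2 = -w/t$, this is exactly the statement that $w\,\tfrac{d}{dw}\log\Lambda(w)=\tfrac{d}{dw}\log\Upsilon(w)$, which is Lemma~\ref{oldy}. Thus Lemma~\ref{oldy} is the crux: it is the input that makes the $\Upsilon$-product the antiderivative forcing the $\tau$-equation to hold. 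I would then separately check the scaling invariance: under $(Z,t)\mapsto(\lambda Z,\lambda t)$ the arguments $Z(\gamma)/2\pi i t$ are unchanged and the index set $\{Z_p(\gamma)\in i\bH_\rr\}$ is unchanged (the half-plane $i\bH_\rr$ is scaled by $\lambda$ along with the central charges), so $\tau_\rr$ is manifestly invariant.

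One subtlety worth flagging, which I expect to be the main thing to handle carefully rather than a genuine obstacle: the product in \eqref{holo} is over active classes $\gamma$ with $Z_p(\gamma)\in i\bH_\rr$, i.e. over \emph{half} the active classes (one of each $\pm$ pair), whereas in the differentiation above I implicitly want to recognise the pairing-symmetrised sum. Since $\Omega(-\gamma)=\Omega(\gamma)$ and $\<\gamma_i,-\gamma\>=-\<\gamma_i,\gamma\>$, one must keep track of which representative of each pair lies in $i\bH_\rr$ and confirm the signs propagate consistently through both \eqref{blbl} and \eqref{holo} — this is exactly the same bookkeeping already used in the proof of Theorem~\ref{hd} (the ``extra factor of 2'' remark), so it transfers directly. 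Finally, because the variation is miniversal the $z_i$ genuinely are local coordinates, so the partial derivatives $\partial/\partial z_j$ make sense, and holomorphy of $\tau_\rr$ away from the walls where $\rr$ becomes active is clear from holomorphy of $\Upsilon$ on $\bC^*\setminus\bR_{<0}$ together with holomorphic dependence of $Z_p(\gamma)$ on $p$.
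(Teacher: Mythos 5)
Your proposal is correct and follows essentially the same route as the paper's own proof: differentiate $\log\tau_\rr$ in the coordinates $z_j$, collapse $\sum_j\epsilon_{ij}m_j(\gamma)$ to $\<\gamma_i,\gamma\>$, and invoke Lemma~\ref{oldy} to convert $\frac{d}{dw}\log\Upsilon$ into $w\frac{d}{dw}\log\Lambda$, which is the $t$-derivative of $\log\Psi_{\rr,\gamma_i}$; scaling invariance and holomorphy are handled exactly as you indicate. The ``half of the classes'' subtlety you flag is in fact harmless here, since both \eqref{blbl} and \eqref{holo} are indexed by the same set $\{Z(\gamma)\in i\bH_\rr\}$, so no $\pm$-pairing needs to be tracked.
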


\begin{proof}
The expression \eqref{holo} is holomorphic and non-zero on $\bH_\rr$ for the same reason given  in the proof of Theorem \ref{hd}. It is also clearly invariant under simultaneous rescaling of $Z$ and $t$.
Choosing a basis $(\gamma_1,\cdots,\gamma_n)\subset \Gamma$, and using the local co-ordinates $z_i=Z(\gamma_i)$ on $M$, we have
\[2\pi i \cdot \sum_i \epsilon_{ij}\frac{\partial \log \tau_\rr}{\partial z_i}=\sum_i   \sum_{Z_p(\gamma)\in i\bH_\rr} \Omega(\gamma) \, \frac{\epsilon_{ij}\,m_i(\gamma)}{ t} \Big(\frac{d}{dw}\Big|_{w=\frac{Z(\gamma)}{2\pi i t}}\Big)\log \Upsilon (w),\]
where we wrote $\epsilon_{ij}=\<\gamma_i,\gamma_j\>$ and   used the decomposition $\gamma=\sum_i m_i(\gamma)\gamma_i$. Using Lemma \ref{oldy} this  can be rewritten as
\[\sum_{Z_p(\gamma)\in i\bH_\rr} \Omega(\gamma)\, \frac{\<\gamma,\gamma_j\>}{ t} \Big(w\frac{d}{dw}\Big|_{w=\frac{Z(\gamma)}{2\pi i t}}\Big)\log \Lambda (w)=\frac{\partial}{\partial t} \sum_{Z_p(\gamma)\in i\bH_\rr} 
\log \Lambda\Big(\frac{Z(\gamma)}{2\pi i t}\Big)^{\Omega(\gamma) \<\gamma_j,\gamma\>}=\frac{\partial\log \Psi_{\rr,\gamma_j}}{\partial t},\]
which gives \eqref{nick2}, and hence completes the proof that \eqref{holo} defines a $\tau$-function.
\end{proof}

%The function $\log \Upsilon(w)$  has an asymptotic expansion
%\[\log \Upsilon(w)\sim -\frac{1}{12} \log(w) + \sum_{g\geq 2} \frac{B_{2g}}{2g(2g-2)} w^{-(2g-2)}.\]
Applying \eqref{deb} we get an asymptotic expansion
\begin{equation}\label{em}\log \tau_\rr(p;t)\sim \frac{1}{24} \sum_{\gamma\in \Gamma} \Omega(\gamma)\log \bigg(\frac{2\pi i t}{Z(\gamma)}\bigg) +\sum_{g\geq 2} \sum_{\gamma\in \Gamma}   \frac{\Omega(\gamma)\cdot B_{2g}}{4g\, (2g-2)} \bigg(\frac{2\pi i t}{Z(\gamma)}\bigg)^{2g-2}\end{equation}
valid as $t\to 0$ in  the half-plane $\bH_\rr$. Once again this expansion is independent of the ray $r\subset \bC^*$.

\begin{remark}
\label{andrei} The function $\Upsilon(w)$  is closely related to the function $\gamma_\hbar(x;\Lambda)$ which plays an important role in Okounkov and Nekrasov's work on supersymmetric gauge theories \cite[Appendix A]{ON}. More precisely,  they consider a function $\gamma_\hbar(x;\Lambda)$ which is uniquely defined up to a linear function in $x$ by two properties: a difference equation, and the existence of an asymptotic expansion. Using the property $G(w+1)=\Gamma(w)\cdot G(w)$ of the Barnes $G$-function, and the expansion \eqref{deb},  it follows that we can take
\[e^{\gamma_t(z;1)}={e^{-\zeta'(-1)}\cdot G\Big(\frac{z}{t}+1\Big)\cdot t^{\frac{1}{2} (\frac{z}{t})^2-\frac{1}{12} }}\cdot {(2\pi)^{-\frac{z}{2t}}}=\Upsilon\Big(\frac{z}{t}\Big)\cdot t^{\frac{1}{12}}\cdot e^{\frac{1}{t^2}(\frac{1}{2}z^2\log(z)-\frac{3}{4} z^2)}.\]
 The term appearing in the exponential on the right-hand side gives rise to the prepotential of the gauge theory. We hope that a clearer understanding of the definition of the $\tau$-function will enable us to give a mathematical definition of the prepotential. 
\end{remark}

%%%%%%%%%%%%%%%%%%%%%%%%%%%%%%%%%%%%%%%%%%%%%%%%%
%%%%%%%%%%%%%%%%%%%%%%%%%%%%%%%%%%%%%%%%%%%%%%%%%
%%%%%%%%%%%%%%%%%%%%%%%%%%%%%%%%%%%%%%%%%%%%%%%%%
%%%%%%%%%%%%%%%%%%%%%%%%%%%%%%%%%%%%%%%%%%%%%%%%%

\section{Geometric case: Gromov-Witten invariants}
\label{gw}

In this section we consider a class of BPS structures related to closed topological string theory  on a compact Calabi-Yau threefold. In mathematical terms they arise from stability conditions on the category of coherent sheaves supported in dimension $\leq 1$. These BPS structures are uncoupled but not finite. We will show that  formally applying the expression \eqref{em}  in this case reproduces the genus 0 degenerate contributions to the Gromov-Witten generating function.
In \cite{tocome} we give a more careful analysis for the special case of the resolved conifold.

\subsection{Gopakumar-Vafa invariants}
 Let $X$ be a smooth projective Calabi-Yau threefold. For the sake of notational simplicity we will assume that the group $H_2(X,\bZ)$ is torsion-free. The Gromov-Witten potential  of $X$ is a formal series
\begin{equation}
\label{help}F(x,\lambda)=\sum_{g\geq 0}\sum_{\beta\in H_2(X,\bZ)} \GW(g,\beta) \cdot x^\beta \cdot \lambda^{2g-2},\end{equation}
where $\GW(g,\beta)\in \bQ$ is the genus $g$ Gromov-Witten invariant for stable maps  of class $\beta\in H_2(X,\bZ)$. Note that by definition these invariants are nonzero only for  effective curve classes $\beta\geq 0$. The symbols $x^\beta$ are  formal variables  living in a suitable completion of the effective cone in the group ring of $H_2(X,\bZ)$, and  $\lambda$ is a formal parameter corresponding to the string coupling.

We can split the series \eqref{help} into contributions from constant and non-constant maps
\[F(x,\lambda)=F_0(x,\lambda)+F'(x,\lambda).\]
The contribution from the constant maps  \cite[Theorem 4]{FP} is
\begin{equation}
\label{cons}F_0(x,\lambda)=a_0(x) \lambda^{-2} +a_1(x) +\chi(X)\cdot \sum_{g\geq 2}  \frac{(-1)^{g-1}\cdot B_{2g}\cdot B_{2g-2}}{4g\cdot (2g-2)\cdot (2g-2)!}\lambda^{2g-2},\end{equation}
where the expressions $a_0(x)$ and $a_1(x)$ can be found for example in \cite{P}. Although the precise form of these expressions  will not be relevant here it is worth noting  that, unlike the higher genus terms, they involve the variables $x^\beta$.
Turning now to the contributions from non-constant maps, the Gopakumar-Vafa conjecture \cite{GV, P} claims that there exist integers  $\GV(g,\beta)\in \bZ$, such that
\[F'(x,\lambda)=\sum_{\beta> 0}\sum_{g\geq 0} \GV(g,\beta)\sum_{k\geq 1} \frac{1}{k} \,\Big(2\sin \Big( \frac{k\lambda}{2} \Big)\Big)^{2g-2} \, x^{k\beta}.\]

We will be particularly interested in the expression
\begin{equation}
\label{yp}\sum_{\beta>0}\GV(0,\beta)\sum_{k\geq 1} \frac{1}{k} \, \Big(2\sin \Big( \frac{k\lambda}{2} \Big)\Big)^{-2} \, x^{k\beta},\end{equation}
which gives the
contribution from genus 0 Gopakumar-Vafa invariants.
Note that using the Laurent expansion
\[(2\sin(s/2))^{-2} = \frac{1}{s^2}-\frac{1}{12}+ \sum_{g\geq 2} \frac{(-1)^{g-1} B_{2g}}{2g(2g-2)!} \cdot  s^{2g-2},\]
we can write the coefficient of $\lambda^{2g-2}$ in \eqref{yp} as
\begin{equation}
\label{blag}
\sum_{\beta>0} \GV(0,\beta) \cdot \frac{(-1)^{g-1} B_{2g}}{2g(2g-2)!} \cdot  \Li_{3-2g}(x^\beta),\end{equation}
at least  for $g\geq 2$.

\subsection{Torsion sheaf BPS invariants}

 Let $\cA=\Coh_{\leq 1}(X)$ denote the abelian category of coherent sheaves on $X$ supported in dimension $\leq 1$.  Any  sheaf $E\in \cA$ has a Chern character $\ch(E)\in H^*(X,\bZ)$, which via Poincar{\'e} duality we can view as an element
\[\ch(E)=(\beta,n)\in \Gamma=H_2(X,\bZ)\oplus \bZ.\]
We note that for any objects $E_1,E_2$ of $\cA$, the Riemann-Roch theorem tells us that
\[\chi(E_1,E_2)=\sum_{i\in \bZ} (-1)^i \dim_\bC \Ext^i_X(E_1,E_2)=\int_X \ch(E_1)^{\dual}\ch(F)\td(X) =0,\]
because the intersection number of any two curves on a threefold is zero. We therefore take $\<-,-\>$ to be the zero form  on  $\Gamma$. We define a central charge
$Z\colon \Gamma\to \bC$ via the formula \[Z(\beta,n)=2\pi(\beta\cdot \omega_\bC-n),\]
where $\omega_\bC=B+i\omega\in H^2(X,\bC)$ is a complexified K{\"a}hler class.
The assumption that $\omega$ is  K{\"a}hler  ensures that for any nonzero object $E\in \cA$ the complex number $Z(E)\in \bC$ lies in the semi-closed upper half-plane
\[\bar{\cH}= \{z=r\exp(i\pi \phi): r\in \bR_{>0}\text{ and }0<\phi\leq 1\}\subset \bC^*.\]
%\cH\cup\bR_{<0}$. 
It follows that $Z$  defines a stability condition on the abelian category $\cA$.
 
For each class $\gamma\in \Gamma$ there is an associated BPS invariant
$\Omega(\gamma)\in \bQ$  first constructed by Joyce and Song (\cite{JS}, see particularly Sections 6.3--6.4). They  are defined using moduli stacks of semistable objects in $\cA$, and should not be confused with the ideal sheaf curve-counting invariants appearing in the famous MNOP conjectures \cite{MNOP}.  Joyce and Song prove that the numbers $\Omega(\gamma)$  are independent of the complexified K{\"a}hler class $\omega_\bC$. This is to be expected, since wall-crossing is trivial when the form $\<-,-\>$ vanishes: see Remark \ref{abelian} below.

A direct calculation \cite[Section 6.3]{JS} shows that
 \begin{equation}\label{montreux}\Omega(0,n)=-\chi(X),\qquad n\in \bZ\setminus\{0\}.\end{equation}
It is expected \cite[Conjecture 6.20]{JS} that when $\beta>0$ is a positive curve class
\begin{equation}
\label{lausanne}
\Omega(\pm \beta,n)=\GV(0,\beta),\end{equation}
and, in particular, is independent of $n$.  We shall assume this in what follows.
 We emphasise that the higher genus Gopakumar-Vafa invariants are invisible from the point-of-view of the  torsion sheaf invariants $\Omega(\gamma)$.

\subsection{Formal computation of the $\tau$-function}

The discussion of the last subsection gives rise to a framed variation of BPS structures  over the complexified K{\"a}hler cone of  $X$, in which the  BPS invariants $\Omega(\gamma)$ are constant.   Let us consider the family of double  structures as defined in Section \ref{double}.
 We can identify
\[\Gamma\oplus\Gamma^\vee=H_*(X,\bZ)\]
equipped with the standard intersection form, and the resulting BPS structures are all uncoupled. 

\begin{remarks}
\begin{itemize}
\item[(i)] We can easily extend this to a miniversal variation if required, by first introducing an extra factor $q\in \bC^*$ rescaling the central charge $Z$, and also adding a component $Z^\dual\colon \Gamma^\vee \to \bC$ as in Section \ref{double}. The resulting central charge is
\[Z((\beta,n),\lambda)=2\pi q(\beta\cdot \omega_\bC-n)+Z^\vee(\lambda).\]
Nothing interesting is gained by doing this however.\smallskip

\item[(ii)]
We view the doubled structures defined here as an approximation to the correct BPS structures, which should also incorporate BPS invariants corresponding to objects of the full derived category $\D^b\Coh(X)$ supported in all dimensions. To define these rigorously would involve constructing stability conditions on $\D^b\Coh(X)$, which for $X$ a general compact Calabi-Yau threefold is a well-known unsolved problem (see \cite{BMS} and \cite{PT} for more on this).
\end{itemize}
\end{remarks}

Although our BPS structures are not finite, we can nonetheless try to solve the associated Riemann-Hilbert problem by superposing infinitely many gamma functions. Let us formally apply \eqref{em} to compute for $g\geq 2$ the coefficient 
\[\sum_{\gamma\in \Gamma}   \frac{\Omega(\gamma)\cdot B_{2g}}{4g(2g-2) } \cdot \frac{1}{Z(\gamma)^{2g-2}}\]
of $(2\pi it)^{2g-2}$ in the asymptotic expansion of the $\tau$-function.\footnote{This calculation was worked out jointly with K. Iwaki.}
The contribution from zero-dimensional sheaves is
\[ - \frac{\chi(X) B_{2g}}{2g(2g-2) (2\pi)^{2g-2}} \cdot\sum_{k\geq 1} \frac{1}{ k^{2g-2}}=%-\chi(X)\cdot  \frac{B_{2g}\, \zeta(2g-2)\, (2\pi i)^{2-2g}}{2g\, (2g-2) } \]
\frac{\chi(X) (-1)^{g-1}B_{2g} \,B_{2g-2} }{4g (2g-2)(2g-2)!},\]
which agrees with \eqref{cons}. The contribution from one-dimensional sheaves is
\begin{equation}\label{blox}\frac{B_{2g}}{2g(2g-2) (2\pi)^{2g-2}} \sum_{\beta>0}\sum_{k\in \bZ}  \frac{\GV(0,\beta) }{    {(v_\beta-k)}^{2g-2}},\end{equation}
where $v_\beta=\omega_\bC\cdot \beta$.
Using the identity
\[ \sum_{k\in \bZ} \frac{1}{(z-k)^{2g-2}}= \frac {(2\pi i)^{2g-2}}{(2g-3)!} \Li_{3-2g}(e^{2\pi i z}),\]
valid for $\Im(z)>0$ and $g\geq 2$, % =\sum_{k\geq 0} k^{2g-3} e^{2\pi ikz}\]
we can rewrite \eqref{blox} as
\begin{equation}
\label{greg}
\frac{(-1)^{g-1} B_{2g} }{2g(2g-2)!} \sum_{\beta>0}\GV(0,\beta)\cdot \Li_{3-2g}(e^{2\pi i v_\beta}),\end{equation}
which then agrees with \eqref{blag}.
We conclude that under the variable change
\[\lambda=2\pi it, \quad x_\beta=\exp(2\pi i v_\beta),\]
the $\log$ of the $\tau$-function reproduces the genus 0 degenerate contributions to the Gromov-Witten generating function \eqref{help},
at least for positive powers of $\lambda$. %The above calculation seems to be closely related to the ones in Gopakumar and Vafa's to their conjecture \cite{GV}.

\begin{remark}
In the paper \cite{tocome} we give a rigorous solution to the Riemann-Hilbert problem in the case when $X$ is the resolved conifold. This involves writing down a non-perturbative function which has the above asymptotic expansion. \end{remark}

%%%%%%%%%%%%%%%%%%%%%%%%%%%%%%%%%%%%%%%%%%%%%%%%%
%%%%%%%%%%%%%%%%%%%%%%%%%%%%%%%%%%%%%%%%%%%%%%%%%
%%%%%%%%%%%%%%%%%%%%%%%%%%%%%%%%%%%%%%%%%%%%%%%%%
%%%%%%%%%%%%%%%%%%%%%%%%%%%%%%%%%%%%%%%%%%%%%%%%%

\section{Quadratic differentials and exact WKB analysis}
\label{in}

The only examples of CY$_3$ categories where the full stability space is understood come from quivers with potential associated to triangulated surfaces. The associated stability spaces can be identified with moduli spaces of meromorphic quadratic differentials on Riemann surfaces \cite{BS}, and the associated BPS invariants then count finite-length trajectories of these differentials. It turns out that the corresponding Riemann-Hilbert problems are closely related to the exact WKB analysis of  time-independent Schr{\"o}dinger equations. We give a brief and sketchy treatment of this connection here; we hope to return to this subject in future papers.

\subsection{Quadratic differentials}

For more details on the contents of this section see \cite{BS}.
Let us start by fixing data
\[g\geq 0, \quad m=\{m_1,\cdots,m_k\}, \quad k\geq 1,\  m_i\geq 2,\]
and  consider the space
$\Quad(g,m)$ consisting of equivalence classes of pairs $(S,q)$, where $S$ is a compact Riemann surface of genus $g$ and $q$ a meromorphic quadratic differential on $S$ with simple zeroes, and poles of multiplicities $m_i$.
It is a complex orbifold of dimension
\[n=6g-6+\sum_i (m_i+1).\]

Associated to a point $(S,q)$ is a double cover
$\pi\colon \Hat{S}\to S$
branched at the zeroes and odd-order poles of $q$. We denote by $\hS^\circ\subset \hS$ the complement of the inverse image of the poles of $q$, and define the hat-homology group
 \[\Gamma=H_1(\hS^{\circ};\bZ)^-, \]
where the superscript denotes the $-1$ eigenspace under the action of the covering involution of $\pi\colon \Hat{S}\to S$. The intersection form defines a skew-symmetric form
\[\<-,-\>\colon \Gamma \times \Gamma \to \bZ.\]
The groups $\Gamma$ form a local system over $\Quad(g,m)$.

The meromorphic abelian differential $\sqrt{q}$ is well-defined on the double cover $\hS$, and holomorphic on $\hS^\circ$. It defines a de Rham cohomology class in $H^1(\hS^\circ;\bC)^-$ and can be viewed as a group homomorphism $Z\colon \Gamma\to \bC$
\[Z(\gamma)=\int_\gamma \sqrt{q}.\]
It was proved in \cite{BS} that the period map
\[\pi\colon \Quad(g,m) \to \Hom_\bZ(\Gamma,\bC)\isom \bC^n.\]
is a local analytic isomorphism.
%If we choose a basis $(\gamma_i)$ we get a local co-ordinate system
%\[z_i=Z(\gamma_i)=\int_{\gamma_i} \sqrt{\phi}.\]

By a trajectory of a differential $(S,q)$ we mean a path in $S$ along which $\sqrt{q}$ has constant phase $\theta$. A finite-length trajectory is of one of two types:
\begin{itemize}
\item[(a)] a saddle connection connects two zeroes of the differential (not necessarily distinct);
\item[(b)] a closed trajectory: any such moves in an annulus of  trajectories called a ring domain.
\end{itemize}
Any finite-length trajectory  can be lifted to a closed cycle in $\hS$ which defines an associated class $\gamma\in \Gamma$. All trajectories in a ring domain have the same class, so we can also talk about the class associated to the ring domain.

 Let us assume that our quadratic differential is generic in the sense that if $\gamma_1,\gamma_2$ are two finite-length trajectories of the same phase, then their classes are  proportional in $\Gamma$.
We then define the BPS invariants of $q$ by
\[\Omega(\gamma)=\#\big\{\mbox{saddle connections of class $\gamma$}\big\} - 2\cdot \#\big\{\mbox{ring domains of class $\gamma$}\big\}.\]
The reason for the coefficient $-2$ is that a ring domain leads to a moduli space of stable objects isomorphic to $\bP^1$.  See \cite[Theorem 1.4]{BS}. In physical terms saddle connections represent hypermultiplets, whereas ring domains represent vector multiplets.

\begin{claim}
The data $(\Gamma, Z,\Omega)$ described above defines a miniversal variation of  convergent BPS structures over the orbifold $\Quad(g,m)$.
\end{claim}

\noindent{\it Sketch proof.}
To check  the wall-crossing formula one can use the results of \cite{BS} to view $\Quad(g,m)$ as an open subset of a space of stability conditions on a CY$_3$ triangulated category defined by a quiver with potential, and then apply the theory of wall-crossing for generalised  DT invariants \cite{JS,KS1}.  %A  more straightforward approach was explained by Gaiotto, Moore and Neitzke  using  Fock and Goncharov's  cluster structure on the moduli spaces of framed local systems.%\footnote{Find a reference for this.}

The fact that the BPS structures are convergent should follow from the results of \cite[Section 5]{BS}. The basic point is that the only non-finiteness in the BPS spectrum arises from finitely many ring domains. Each of these contributes an infinite collection of saddle connections with classes of the form $\gamma+n\alpha$, where $\alpha$ is the class of the ring domain. But for sufficiently large $R>0$ these saddle connections give a finite contribution to the sum \eqref{concon}.
\qed \smallskip

\subsection{Voros symbols}
The weak Riemann-Hilbert problem (see Remark \ref{oh}(iii)) defined by the above BPS structures  can be solved using exact WKB analysis of an associated Schr{\"o}dinger equation.  This was  essentially proved by Iwaki and Nakanishi \cite{IN} following Gaiotto, Moore and Neitzke \cite{GMN2}.

Suppose given a quadratic differential $(S,q)$ as above. Let us also choose 
 a projective structure  on the Riemann surface $S$.\footnote{ If the quadratic differential $q$ has a double pole at a point $p\in S$ one should assume that this projective structure (represented in \cite{IN} by the term $Q_2(z)$) also has a pole of a particular form \cite[Assumption 2.5]{IN}.}
We can then invariantly consider the holomorphic Schr{\"o}dinger equation
\[\hbar^2 \frac{d^2}{dz^2} y(z,\hbar) = q(z)\cdot y(z,\hbar),\]
where $z$ is a co-ordinate in the chosen projective structure.
The WKB method involves substituting
\[y(z_1,\hbar)=\exp\Big( \int_{z_0}^{z_1} T(z)\,dz \Big), \qquad T(z)=\sum_{k\geq 0} T_k(z) \cdot \hbar^{k-1},\]
and then solving for $T_j(z)$ order by order. This gives rise to a recursion
\[\frac{dT_{k-1}}{dz}+\sum_{i+j=k} T_i(z) T_j(z)=0,\quad k>0,\]
together with the initial condition $T_{0}(z)^2 =q(z)$. Depending on the choice of square-root taken to define $T_0(z)$ one then obtains two systems of solutions $T_k^{\pm}(z)$. The differences
\[\omega_k(z)=\frac{1}{2} \big(T^+_k(z) -T^-_k(z)\big) dz\]
are single-valued meromorphic one-forms on the spectral cover $\hS$, which vanish unless $k$ is even. One has $\omega_0=\sqrt{q(z)} \, dz$.

The formal cycle Voros symbol associated to a class $\gamma\in \Gamma$ is the formal sum
\begin{equation}\exp(V_\gamma)= \exp\Big(\frac{1}{\hbar} \int_\gamma \omega_0\Big)\cdot \exp\Big(\sum_{g\geq 1} \hbar^{2g-1} \int_\gamma \omega_{2g} \Big).\end{equation}
%The dual lattice $\Gamma^\dual$ is naturally  identified with the relative homology group $H_1(\hS,{\rm Poles}(q),\bZ)^-$. Elements can be represented by paths $\lambda$ on $\hS$ connecting poles of the differential $q$. The associated formal path Voros symbol is 
%\begin{equation}\exp(W_\lambda)=  \exp\Big(\sum_{g\geq 1} \hbar^{2g-1} \int_\lambda \omega_{2g} \Big).\end{equation}

Iwaki and Nakanishi, relying on analytic results of \cite{KoSc},  show that if $\rr\subset \bC^*$ is a non-active ray for the BPS structure defined by $(S,q)$, then the sum over $g\geq 1$ in the above formal expressions can be Borel summed in the direction $\rr$. This results in Voros symbols which are holomorphic functions of $t=\hbar$ defined in a neighbourhood of $0$ in the half-plane $\bH_\rr$. They also compute the wall-crossing behaviour for these Borel sums as one varies the active ray $\rr\subset \bC^*$.

\begin{claim}
Given a quadratic differential $(S,q)$ as above,  the Borel sums of the cycle Voros symbols  give a solution to the corresponding weak Riemann-Hilbert problem  with $t=\hbar$.
\end{claim}
\smallskip

\noindent{\it Sketch proof.}
For the definition of the weak Riemann-Hilbert problem see Remark \ref{oh}(iii). The claim should follow from  the work of Iwaki and Nakanishi \cite{IN}, although a careful proof would require additional continuity arguments to take care of differentials with multiple saddle connections. See particularly Theorem 2.18, Theorem 3.4 and formula (2.21). \qed
\smallskip

It is interesting to ask whether given a suitable choice of base projective structure, the Voros symbols in fact give solutions to the full Riemann-Hilbert problem as stated in Section \ref{real}.  Another interesting topic for further research is the connection with topological recursion \cite{ey}, which is known to be closely related to exact WKB analysis. In particular, it is interesting to ask whether the $\tau$-function computed by topological recursion gives a $\tau$-function in the sense of this paper.

\begin{appendix}

%%%%%%%%%%%%%%%%%%%%%%%%%%%%%%%%%%%%%%%%%%%%%%%%%
%%%%%%%%%%%%%%%%%%%%%%%%%%%%%%%%%%%%%%%%%%%%%%%%%%%%%%%%%%%%%%%%%%%%%%%%%%%%%%%%%%%%%%%%%%%%%%%%%%
%%%%%%%%%%%%%%%%%%%%%%%%%%%%%%%%%%%%%%%%%%%%%%%%%

\section{Variations of BPS structure}
\label{two}

%This section is rather technical and is probably best skipped on a first reading.
For BPS structures satisfying the conditions of Proposition \ref{barneyy}, the BPS automorphisms $\bS(\ell)$ can be realised   as birational automorphisms of the twisted torus $\bT$. In general however, even for BPS structures coming from quivers with potential, this is not the case.  Thus we need some other approach to defining a variation of BPS structures. If we want to avoid making unnecessary assumptions, the only way to proceed is via formal completions of the ring $\bC[\bT]$. In this section we give a rigorous definition along these lines following Kontsevich and Soibelman \cite[Section 2]{KS1}.

\subsection{Introductory remarks}

Before starting formal definitions in the next subsection we  consider here a slightly simplified situation which should help to make the general picture clearer. Let $(\Gamma,Z,\Omega)$ be a BPS structure. Given a basis $(\gamma_1,\cdots,\gamma_n)\subset \Gamma$ there is a corresponding cone
\[\Gamma^\oplus=\big\{\gamma =\sum  d_i \gamma_i \text{ with }  d_i\in \bZ_{\geq 0}\big\}\subset \Gamma,\]
whose elements we call positive. We call a class $\gamma\in \Gamma$ negative if $-\gamma$ is positive.

Let us  assume that we can find a basis such that every active class  is either positive or negative, and further that all positive classes $\gamma\in \Gamma$ satisfy $\Im Z(\gamma)>0$. These assumptions are always satisfied  for BPS structures arising from stability conditions on quivers.
 Let us also temporarily ignore the difference between the tori $\bT_\pm$.

The co-ordinate ring $\bC[\bT]$ can be identified with the algebra of Laurent polynomials
\[\bC[\bT]=\bC[y_1^{\pm 1}, \cdots, y_n^{\pm n}],\]
and a regular automorphism of $\bT$ corresponds to an automorphism of this algebra. The element $\DT(\ell)$ corresponding to an active ray $\ell\subset \bC^*$ does not  in general define an element of $\bC[\bT]$ however,  since it could an infinite sum of characters.
If a ray $\ell\subset \bC^*$ lies in the upper half-plane, the series $\DT(\ell)$  defines an element of the power series ring
\[\bC[[\bT]]=\bC[[y_1, \cdots, y_n]],\]
which is a Poisson algebra in  the same way as $\bC[\bT]$. 
We can then  define an algebra automorphism
\[\bS(\ell)^*=\exp \{\DT(\ell),-\}\in \Aut  \bC[[\bT]],\]
and  interpret the wall-crossing formula \eqref{wcw} in the group of  automorphisms of  $\bC[[\bT]]$, at least for sectors $\Delta\subset \bC^*$ contained in the upper half-plane.

In what follows we will modify this  procedure  in two ways. Firstly, to avoid the assumption on active classes we will work with completions of $\bC[\bT]$ defined by more general cones in $\Gamma$. Secondly, note that if the form $\<-,-\>=0$ is trivial, the  automorphisms $\bS(\ell)^*$ as defined above will all be identity maps, and the wall-crossing formula will become vacuous. One way to  avoid this problem would be to force the form $\<-,-\>$ to be non-degenerate by  passing to the double BPS structure as in Section \ref{double}.  Instead, we will  formulate the wall-crossing formula in a group defined abstractly by using the Baker-Campbell-Hausdorff formula to formally exponentiate elements of $\bC[[\bT]]$.  %To see the connection with the definition suggested above, we note that this abstract group has an action by Poisson automorphisms  on the algebra $\bC[[\bT]]$ which is 
 %faithful precisely when $\<-,-\>$ is non-degenerate.

\subsection{Formal completions}
Let $(\Gamma,Z,\Omega)$ be a BPS structure. To an acute sector   $\Delta\subset \bC^*$  we associate a Poisson subalgebra
\begin{equation}
\label{frioff}\bC_\Delta[\bT]=\bigoplus_{Z(\gamma) \in \Delta\sqcup\{0\}}  \bC\cdot x_\gamma\subset \bC[\bT].\end{equation}
We will now introduce a natural completion of this algebra.

Define the height of an element $a=\sum_\gamma  a_\gamma\cdot x_\gamma$   to be
\[\height(a)=\inf\{|Z(\gamma)|: \gamma \in \Gamma\text{ such that }a_\gamma\neq 0\}.\]
Note that since $\Delta$ is acute, we have
\[|Z(\gamma_1+\gamma_2)|\geq \min (|Z(\gamma_1)|,|Z(\gamma_2)|),\]
whenever $\gamma_i\in \Gamma$ satisfy $Z(\gamma_i)\in \Delta$. It follows that \[\height(a\cdot b)\geq \max(\height(a),\height(b)),\quad \height(\{a,b\})\geq \max(\height(a),\height(b)).\]
In particular, for any $N>0$, the subspace \[\bC_\Delta[\bT]_{\geq N}\subset \bC_\Delta[\bT]\] consisting of elements of height $\geq N$ is a Poisson ideal. We can therefore consider the inverse limit of the quotient Poisson algebras as $N\to \infty$:
\[\bC_{\Delta}[[\bT]]=\varprojlim_{N} \,\bC_{\Delta}[\bT]_{< N}, \qquad  \bC_{\Delta}[\bT]_{< N}= \bC_{\Delta}[\bT]/\bC_{\Delta}[\bT]_{\geq N}.\] 
The resulting completion $\bC_{\Delta}[[\bT]]$
can be identified with  the set of formal sums \begin{equation}\label{lon} \big.\sum_{Z(\gamma)\in \Delta} a_\gamma \cdot x_\gamma,\end{equation} such that for any $N>0$ there are only finitely many terms with $|Z(\gamma)|<N$. We define the height of such an element to be  the minimum value of $|Z(\gamma)|$ occurring.

\subsection{Lie algebra and associated group}
Continue with the notation from the last subsection. For each $N>0$, 
the Poisson bracket induces the structure of a nilpotent  Lie algebra  on the Poisson ideal
\[\fg_{\Delta,\leq N} \subset \bC_{\Delta}[\bT]_{\leq N}\]
consisting of elements of positive height. The Baker-Campbell-Hausdorff formula then gives rise to a unipotent algebraic group $G_{\Delta,\leq N}$ with a bijective exponential map
\[\exp\colon \fg_{\Delta,\leq N} \to G_{\Delta,\leq N}.\]
Taking the limit as $N\to \infty$ we obtain a pro-nilpotent Lie algebra and a pro-unipotent group
related by a bijective exponential map
\[\exp\colon \Hat{\fg}_{\Delta}\to \Hat{G}_{\Delta}.\]

More concretely, the Lie algebra $\Hat{\fg}_\Delta$ consists of formal sums \eqref{lon} of positive height, viewed as a Lie algebra via the Poisson bracket. The group \[\Hat{G}_\Delta=\{\exp(x):x\in \Hat{\fg}_\Delta\},\]
consists of formal symbols $\exp(x)$ for elements $x$ of the Lie algebra. The group structure is  defined using the Baker-Campbell-Hausdorff formula:
\[\exp(x)\cdot \exp(y) = \exp \Big( (x+y)+\frac{1}{2}[x,y]+\cdots \Big).\]
Note that if $\Delta_1\subset \Delta_2$ are nested acute sectors then there is an obvious embedding of Lie algebras ${\fg}_{\Delta_1}\subset {\fg}_{\Delta_2}$, which induces an injective homomorphism $\Hat{G}_{\Delta_1}\into \Hat{G}_{\Delta_2}$ of the corresponding groups. 
We happily confuse elements of $\Hat{G}_{\Delta_1}$ with the corresponding elements of $\Hat{G}_{\Delta_1}$.

 \begin{remark}The Lie algebra $\Hat{\fg}_\Delta$ acts on the algebra $\bC_{\Delta}[[\bT]]$ via Poisson derivations: 
\[x\cdot y = \{x,y\}, \quad x\in \Hat{\fg}_\Delta, \quad y\in \bC_{\Delta}[[\bT]].\]
This exponentiates to give an action of $\Hat{G}_\Delta$ by Poisson automorphisms:
\begin{equation}
\label{rats}\exp(x)\cdot y = \exp \{x, - \} (y), \quad x\in \Hat{\fg}_\Delta, \quad y\in \bC_{\Delta}[[\bT]].\end{equation}
On the left-hand side of \eqref{rats} the symbol $\exp$ is purely formal as discussed above, whereas on the right-hand side it denotes the exponential of the Poisson derivation $\{x,-\}$, viewed as an endomorphism of the vector space $\bC_{\Delta}[[\bT]]$.\end{remark}

\subsection{Products over rays}

Let  $(\Gamma, Z, \Omega)$ be a BPS structure.  Recall the notion of the height of a ray from Section \ref{raydiagram}. Let us now also fix an acute sector $\Delta\subset \bC$.
The support property ensures that for any ray $\ell\subset \Delta$ the expression
\[\DT(\ell)=-\hspace{-.8em}\sum_{\gamma\in \Gamma: Z(\gamma)\in \ell} \DT(\gamma) \cdot x_\gamma\in \bC_\Delta[[\bT]]\]
defines an element of the Lie algebra $\Hat{\fg}_\Delta$. By definition, its height is equal to $\height(\ell)$. We denote the corresponding group element by
\[\bS(\ell)^*=\exp \DT(\ell)\in \Hat{G}_\Delta.\]

Given $N>0$, we can  consider the  truncation
\[\bS(\ell)^*_{<N}=\exp \DT(\ell)_{<N} \in \Hat{G}_{\Delta,<N}.\]
This element is non-trivial only for the finitely many rays of height $<N$. Therefore, we can form the finite clockwise product
\begin{equation}
\label{fin}\bS(\Delta)^*_{<N}=\prod_{\ell\subset \Delta} \;\bS(\ell)_{<N} \in \Hat{G}_{\Delta,<N}.\end{equation}
Taking the limit  $N\to \infty$ then gives a well-defined element %$\bS(\Delta)\in \Hat{G}_\Delta$ satisfying
\begin{equation}
\label{late}\bS(\Delta)^*=\prod_{\ell\subset \Delta} \;\bS(\ell)^* \in \Hat{G}_{\Delta}.\end{equation}
The product on the right hand side will usually be infinite.
%Lemma \ref{cod} below shows that this element encodes  all the invariants $\Omega(\gamma)$ for classes $\gamma\in \Gamma$ with  $Z(\gamma)\in \Delta$.

%\begin{remark}

%\end{remark}

\subsection{Deforming the central charge}
\label{works}

There is one more detail we have to deal with before we can give the definition of a variation of BPS structure. The problem is that the groups $\Hat{G}_\Delta$ and  $\Hat{G}_{\Delta,<N}$ which we defined above depend on the central charge of the BPS structure in a highly discontinuous way. This problem arises already in the definition \eqref{frioff}: the subalgebra $\bC_\Delta(\bT)$ will change whenever the central charge $Z(\gamma)$ of any class $\gamma\in \Gamma$ crosses a boundary ray of $\Delta$, and since this will happen on a dense subset of the set of possible central charges $Z\colon \Gamma\to \bC$ this leads to a highly discontinuous family of algebras.

To solve this problem, let us fix $C>0$ and consider the subset $\Gamma(\Delta,C)\subset \Gamma$  consisting of all non-negative integral combinations of elements $\gamma\in \Gamma$ which satisfy \begin{equation}\label{blog}Z(\gamma)\in \Delta, \qquad |Z(\gamma)|>C\cdot \|\gamma\|.\end{equation}
By definition  this is a submonoid of $\Gamma$ under addition, so
there is a  Poisson subalgebra \[\bC_{(\Delta,C)}[\bT]=\bigoplus_{\gamma\in \Gamma(\Delta,C)} \bC\cdot x_\gamma\subset \bC_\Delta[\bT].\]  We can then define Lie algebras and associated groups exactly as above. Moreover, if we take $C$ smaller than the constant in the support property for the BPS structure $(\Gamma,Z,\Omega)$ then all the identities of the last subsection will take place in these groups, since we will then have \[\Omega(\gamma)\neq 0\implies \gamma\in \Gamma(\Delta,C).\]

Consider now the quotient algebra
$\bC_{(\Delta,C)}[\bT]_{<N}$ defined as before. %= \bC_{(\Delta,C)}[\bT]/\bC_{(\Delta,C)}[\bT]_{\geq N}.\]
This depends only on the set of  $\gamma\in \Gamma(\Delta,C)$  for which $|Z(\gamma)|<N$, and hence only on the finite subset
\begin{equation}
\label{subsub}\big\{\gamma\in \Gamma: Z(\gamma)\in \Delta\text{ and } C \|\gamma\|<|Z(\gamma)|<N\big\}\subset \{\gamma\in \Gamma: \|\gamma\|<N/C\}\subset \Gamma.\end{equation}
Moreover, as $Z$ varies, it remains constant in the complement of the finite system of hypersurfaces. Clearly the same remarks apply to the corresponding group $\Hat{G}_{(\Delta,C),<N}$.

\subsection{Variations of BPS structure}

We can now give a rigorous  definition of a variation of BPS structures. The conditions are explained more fully below. %This is a little tricky since we have to allow for the possibility that the active rays are dense in $\bC^*$.

\begin{defn}
\label{nick}
A variation of BPS structure consists of a complex manifold $M$, together with BPS structures $(\Gamma_p,Z_p,\Omega_p)$ indexed by the  points $p\in M$, such that \smallskip

\begin{itemize}
\item[(V1)] {\it Local system of charge lattices}. The charge lattices  $\Gamma_p$ form a local system of abelian groups, and the intersection form is covariantly constant.\smallskip

\item[(V2)] {\it Holomorphic variation of central charge.} Given a covariantly constant family of elements $\gamma_p\in \Gamma_p$, the central charges $Z_p(\gamma_p)\in \bC$ are holomorphic functions of $p\in M$.\smallskip

\item[(V3)] {\it Uniform support property.} Fix a covariantly constant family of norms \[\|\cdot\|_p\colon \Gamma_p\tensor_\bZ \bR\to \bR_{>0}.\] Then for any compact subset $F\subset M$ there is a  $C>0$ such that
\[\Omega_p(\gamma_p)\neq 0 \text{ for some }p\in F \implies |Z_p(\gamma_p)|> C\cdot \|\gamma\|_p.\]

\item[(V4)] {\it Wall-crossing formula.} Suppose given a contractible open subset $U\subset M$,   a constant $N>0$, and a convex sector $\Delta\subset \bC^*$. We can trivialise the local system $\Gamma_p$ over $U$ and hence identify $\Gamma_p$ with a fixed lattice $\Gamma$. Take $C>0$ as in (V3) and assume that the subset \eqref{subsub} is constant for  the BPS structures corresponding to points $p\in U$. Then the clockwise products 
%\in \Hat{G}_{\Delta,\leq N,p}\]
\[\bS_p(\Delta)^*_{<N}=\prod_{\ell\subset \Delta} \;\bS_p(\ell)^*_{<N} \in \Hat{G}_{(\Delta,C),<N},\]
defined as in \eqref{fin}
 are  constant as $p\in U$ varies.
 \end{itemize}
\end{defn}

We say that the variation is framed if the local system of lattices $\Gamma_p$ over $M$ is  trivial. The lattices $\Gamma_p$ can then all be  identified with a fixed lattice   $\Gamma$, and we write the variation as $(\Gamma,Z_p,\Omega_p)$. We can always reduce to the case of a framed variation by passing to a cover of $M$, or by restricting to a contractible open subset $U\subset M$.
A framed variation of BPS structures  $(\Gamma,Z_p,\Omega_p)$ over a manifold $M$ gives rise to a holomorphic map
\begin{equation}\label{period}\pi \colon M\to \Hom_\bZ(\Gamma,\bC)\isom \bC^n, \quad p\mapsto Z_p,\end{equation}
which we call the period map. 
The variation  will be called miniversal if this map is a 
  local isomorphism. A general variation  will be called miniversal if the framed variations obtained by restricting  to  small open subsets of $M$ are miniversal.

 \subsection{Behaviour of BPS invariants}
 
Condition (V4) in Definition \ref{nick} completely describes the variation of the BPS (or equivalently DT)  invariants: if one knows the $\Omega_p(\gamma)$ at some point $p\in M$ then  they are uniquely determined at all other points $p\in M$.   This statement follows immediately from the following result.

\begin{lemma}
\label{cod}
Given a BPS structure $(\Gamma,Z,\Omega)$, the element \[\bS(\Delta)^*=\prod_{\ell\subset \Delta} \;\bS(\ell)^* \in \Hat{G}_{\Delta}\]determines
the invariants $\Omega(\gamma)$ for all classes $\gamma\in \Gamma$ satisfying $Z(\gamma)\in \Delta$.
\end{lemma}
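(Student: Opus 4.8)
The plan is to reduce the statement to a uniqueness property for the factorization of $\bS(\Delta)$ into ray factors, and then prove that property by filtering by height. First, by \eqref{bps} and the subsequent M\"obius inversion, the data $\{\Omega(\gamma)\}$ and $\{\DT(\gamma)\}$ determine one another; moreover if $m\mid\gamma$ then $Z(\gamma/m)=Z(\gamma)/m\in\Delta$ and $|Z(\gamma/m)|\le|Z(\gamma)|$, so it is enough to show that $\bS(\Delta)$ determines $\DT(\gamma)$ for every $\gamma$ with $Z(\gamma)\in\Delta$. Since $\DT(\ell)=\sum_{Z(\gamma)\in\ell}\DT(\gamma)\,x_\gamma$ and distinct active rays carry disjoint families of twisted characters, this in turn amounts to: the factorization $\bS(\Delta)=\prod_{\ell\subset\Delta}\bS(\ell)$ of $\bS(\Delta)\in\Hat{G}_\Delta$ into a clockwise-ordered product of ray-supported group elements is unique.

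To prove this, work in the truncations $\Hat{G}_{\Delta,<N}$, where all manipulations are legitimate, and exploit the height filtration. Let $H_1<H_2<\cdots$ be the (discrete, by the support property) set of values $|Z(\gamma)|$ occurring for $\gamma$ with $Z(\gamma)\in\Delta$ and $\DT(\gamma)\ne 0$, and for each $H>0$ put $\fg^{(H)}=\bigoplus_{Z(\gamma)\in\Delta,\,|Z(\gamma)|=H}\bC\cdot x_\gamma$ and $\Hat{\fg}_{>H}=\bigoplus_{Z(\gamma)\in\Delta,\,|Z(\gamma)|>H}\bC\cdot x_\gamma$; the latter is a Lie ideal (brackets do not decrease height, by the displayed inequality $\height(\{a,b\})\ge\max(\height(a),\height(b))$), and $\exp$ descends to a bijection between $\Hat{\fg}_\Delta/\Hat{\fg}_{>H}$ and $\Hat{G}_\Delta/\exp(\Hat{\fg}_{>H})$. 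The key point is that because $\Delta$ is acute one actually has $|Z(\alpha+\beta)|>\max(|Z(\alpha)|,|Z(\beta)|)$ for nonzero $Z(\alpha),Z(\beta)\in\Delta$ (the real part of $\overline{Z(\alpha)}Z(\beta)$ is nonnegative), so the Poisson bracket \emph{strictly} raises height on pure pieces: $\{\fg^{(H)},\fg^{(H)}\}\subseteq\Hat{\fg}_{>H}$ and $\{\fg^{(H)},\Hat{\fg}_{<H}\}\subseteq\Hat{\fg}_{>H}$.

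Now induct on $k$, showing that $\bS(\Delta)$ determines $\DT(\gamma)$ for all $\gamma$ with $|Z(\gamma)|=H_k$. Assume this is known at all levels below $H_k$; then the truncations $a_\ell:=\DT(\ell)_{<H_k}$, the elements $\bS(\ell)_{<H_k}=\exp(a_\ell)$, and their clockwise product $P:=\prod_{\ell}\bS(\ell)_{<H_k}$ are all determined. Write $b_\ell:=\DT(\ell)^{(H_k)}\in\fg^{(H_k)}$ for the height-exactly-$H_k$ part of $\DT(\ell)$, so $\bS(\ell)\equiv\exp(a_\ell+b_\ell)\pmod{\exp(\Hat{\fg}_{>H_k})}$. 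Since each $b_\ell$ has height exactly $H_k$, the strict-height-raising property above shows that modulo $\exp(\Hat{\fg}_{>H_k})$ the element $\exp(b_\ell)$ is central and $\exp(a_\ell+b_\ell)\equiv\exp(a_\ell)\exp(b_\ell)$. A short Baker--Campbell--Hausdorff bookkeeping then gives, modulo $\exp(\Hat{\fg}_{>H_k})$,
\[\bS(\Delta)=\prod_{\ell}\exp(a_\ell+b_\ell)\equiv\Big(\prod_{\ell}\exp(a_\ell)\Big)\cdot\exp\Big(\sum_{\ell}b_\ell\Big)=P\cdot\exp\Big(\sum_{\ell}b_\ell\Big),\]
so $\sum_\ell b_\ell=\sum_{|Z(\gamma)|=H_k}\DT(\gamma)\,x_\gamma$ is the height-$H_k$ component of $\log\big(P^{-1}\bS(\Delta)\big)$, hence is determined by $\bS(\Delta)$. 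As distinct rays contribute disjoint characters, this recovers $\DT(\gamma)$ for every $\gamma$ with $|Z(\gamma)|=H_k$; the base case $k=1$ is the same computation with $P=1$. This completes the induction, and with the first paragraph, the proof.

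The main obstacle, and the only place real care is needed, is the strict height increase of the bracket on pure pieces and the verification that this is exactly what makes the clockwise ordering irrelevant at each fixed height: one must pin down the precise sense in which ``acute'' forces $|Z(\alpha+\beta)|>\max(|Z(\alpha)|,|Z(\beta)|)$, and then check that the resulting centrality of $\exp(b_\ell)$ modulo $\exp(\Hat{\fg}_{>H_k})$ collapses all the BCH corrections. Everything else is routine: that $\Hat{\fg}_{>H}$ is an ideal and $\exp$ is bijective on the relevant nilpotent quotients are already built into the constructions of Appendix \ref{two}, and the passage between $\DT$ and $\Omega$ is the M\"obius inversion of \eqref{bps}.
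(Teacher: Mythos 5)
Your argument is correct, but it proves the uniqueness of the factorization $\bS(\Delta)=\prod_\ell\bS(\ell)$ by a genuinely different mechanism from the paper. The paper's proof is ``angular'': it splits $\Delta$ into two subsectors $\Delta_+\sqcup\Delta_-$, notes the direct sum decomposition $\Hat{\fg}_\Delta=\Hat{\fg}_{\Delta_+}\oplus\Hat{\fg}_{\Delta_-}$ into Lie \emph{subalgebras}, invokes the standard unique factorization $g=g_+\cdot g_-$ in the associated pro-unipotent group, and iterates until each subsector contains a single ray of height $<N$; this recovers each group element $\bS(\ell)_{<N}$, hence each $\DT(\ell)$ by taking $\log$. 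Your proof is ``radial'': you filter by the modulus $|Z(\gamma)|$ and exploit that for an acute sector the bracket strictly increases height, so that the associated graded of the height filtration is central and the clockwise ordering becomes irrelevant at each fixed level; you then read off the height-$H_k$ Fourier coefficients of $\log(P^{-1}\bS(\Delta))$ directly, without ever reconstituting the individual group elements $\bS(\ell)$. Both routes rest on acuteness: the paper needs the subsector pieces to be subalgebras and (implicitly, for $\height(\{a,b\})\geq\max$) an opening angle at most $\pi/2$, which is exactly the hypothesis under which your strict inequality $|Z(\alpha)+Z(\beta)|^2\geq|Z(\alpha)|^2+|Z(\beta)|^2>\max(|Z(\alpha)|,|Z(\beta)|)^2$ holds, so the point you flag as needing care is consistent with the paper's own conventions. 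The paper's sector-splitting argument is the one that survives verbatim in more general wall-crossing settings (it only uses the direct-sum structure over rays), while your height induction gives the sharper structural statement that the product at any fixed height is abelian modulo higher heights, which makes the extraction of the $\DT(\gamma)$ completely explicit. Your preliminary reductions (M\"obius inversion between $\Omega$ and $\DT$, and the observation that $m\mid\gamma$ with $Z(\gamma)\in\Delta$ forces $Z(\gamma/m)\in\Delta$, together with local finiteness of the set of occurring heights) are correct and in fact slightly more careful than the paper's closing sentence.
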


\begin{proof}
Suppose that  $\Delta$ is the disjoint union of two subsectors $\Delta=\Delta_+\sqcup \Delta_-$ where we make the convention that $\Delta_+$ lies in the anti-clockwise direction. There is an obvious  decomposition
\[\Hat{\fg}_{\Delta}=\Hat{\fg}_{\Delta_+}\oplus \Hat{\fg}_{\Delta_-}.\]
It follows from this  that each element $g\in \Hat{G}_{\Delta}$ decomposes uniquely as a product $g=g_+\cdot g_-$ with $g_\pm\in \Hat{G}_{\Delta_{\pm}}$.  But the obvious relation
\[\bS(\Delta)^*=\bS(\Delta_+)^*\cdot\bS(\Delta_-)^*,\]
gives an example of such a decomposition, so by uniqueness it follows that $\bS(\Delta)^*$ determines the elements $\bS(\Delta_\pm)^*$. 

The  product \eqref{fin} can be thought of as corresponding  to a decomposition of $\Delta$ into a finite number of subsectors, each containing a single ray of height $<N$. Applying the first part repeatedly therefore shows that $\bS(\Delta)^*_{<N}$  determines each element $\bS(\ell)^*_{<N}$ for rays $\ell\subset \Delta$. Since this holds  for arbitrarily large $N$ it follows that $\bS(\Delta)^*$ determines the elements $\bS(\ell)^*$ for all such rays. But the elements $\bS(\ell)^*$   faithfully encode the invariants $\DT(\gamma)$  for classes $\gamma\in \Gamma$  satisfying $Z(\gamma)\in \ell$, so the result follows.
\end{proof}

\begin{remark}
\label{abelian}
Suppose given a framed variation of BPS structures $(\Gamma,Z_p,\Omega_p)$ over a manifold $M$ such that at some point  (and hence at all points) $p\in M$ the form $\<-,-\>=0$ vanishes.  Then  the Lie algebras $\Hat{\fg}_\Delta$ are abelian, and hence so too are the associated groups $\Hat{G}_\Delta$. It follows that the wall-crossing formula is satisfied if all elements $\bS_p(\ell)^*$ are taken to be constant as $p\in M$ varies. It follows from Lemma \ref{cod} that  the BPS invariants $\Omega_p(\gamma)=\Omega(\gamma)$ are constant.
 \end{remark}

One can also easily see the following statement: suppose given a framed variation of BPS structures $(\Gamma,Z_p,\Omega_p)$ over a manifold $M$. Then for any fixed class $\gamma\in \Gamma$ there is a locally-finite collection of codimension one real  submanifolds $\cW_i\subset M$ such that the invariant $\Omega(\gamma)$ is constant on the open complement \begin{equation}
\label{comp}M\setminus \bigcup \cW_i.\end{equation}
The submanifolds $\cW_i$ are called walls for the class $\gamma$, and the connected components of the complement \eqref{comp} are called chambers.

%%%%%%%%%%%%%%%%%%%%%%%%%%%%%%%%%%%%%%%%%%%%%%%%%
%%%%%%%%%%%%%%%%%%%%%%%%%%%%%%%%%%%%%%%%%%%%%%%%%
%%%%%%%%%%%%%%%%%%%%%%%%%%%%%%%%%%%%%%%%%%%%%%%%%
%%%%%%%%%%%%%%%%%%%%%%%%%%%%%%%%%%%%%%%%%%%%%%%%%

\section{Convergent BPS structures}
\label{converge}

 The aim of this section is to prove the claims made in Section \ref{ref} that enable one to view the BPS automorphisms of a convergent BPS structure as being partially-defined automorphisms of the twisted torus. 

\subsection{Convergent BPS structures}

Recall  from Section \ref{defns} that a BPS structure $(\Gamma,Z,\Omega)$ is called convergent if there exists $R>0$ such that 
\begin{equation}
\label{blockhead}\sum_{\gamma\in \Gamma} |\Omega(\gamma)| \cdot e^{-R |Z(\gamma)|}<\infty.\end{equation}
Let us fix an arbitrary norm
$\|\cdot\|$  on the finite-dimensional vector space $\Gamma\tensor_\bZ \bR$. Then we can find $k_1<k_2$ such that for any active class $\gamma\in \Gamma$  there are inequalities
\begin{equation}\label{ein}k_1\cdot \|\gamma\| < |Z(\gamma)|< k_2\cdot \|\gamma\|.\end{equation}
The first inequality is the support property and requires the assumption that $\gamma\in \Gamma$ is active, whereas the second is just the fact that the linear map $Z\colon \Gamma\tensor_\bZ \bR\to \bC$ has bounded norm. It follows that in  expressions such as  \eqref{blockhead}, which are sums over active classes, we can equally well use $\|\gamma\|$ or $|Z(\gamma)|$ in the exponential factor.

\begin{lemma}
\label{sven}
Let $(\Gamma,Z,\Omega)$  be a convergent BPS structure and choose a norm $\|\cdot\|$  as above.   Then for any $\epsilon>0$ there is an $R>0$ such that
\begin{equation}\label{andrew}\sum_{\gamma\in \Gamma} \|\gamma\|^p \cdot |\DT(\gamma)|\cdot e^{-R  \|\gamma\|}<\epsilon,\end{equation}
for each integer $p\in \{0,1,2\}$. 
\end{lemma}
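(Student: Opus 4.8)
The plan is to deduce \eqref{andrew} directly from the convergence hypothesis \eqref{blockhead}, using nothing more than the divisor-sum definition \eqref{bps} of $\DT$ and the discreteness of $\Gamma$. First I would collect two preliminary observations. By the second inequality in \eqref{ein} the hypothesis \eqref{blockhead} yields a constant $R_0>0$ with $\sum_{\alpha\in\Gamma}|\Omega(\alpha)|\,e^{-R_0\|\alpha\|}<\infty$, since taking $R_0=Rk_2$ gives $e^{-R_0\|\alpha\|}\le e^{-R|Z(\alpha)|}$ for every active $\alpha$ (and only active classes contribute). Also, the support property \eqref{support} forces $\Omega(0)=0$, and since $\Gamma\cong\bZ^{\oplus n}$ is discrete there is a $\delta>0$ with $\|\alpha\|\ge\delta$ for all nonzero $\alpha\in\Gamma$; in particular $\|\alpha\|\ge\delta$ whenever $\Omega(\alpha)\ne 0$.

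Next I would rewrite the left-hand side of \eqref{andrew} in terms of $\Omega$. Bounding $|\DT(\gamma)|\le\sum_{\gamma=m\alpha}m^{-2}|\Omega(\alpha)|$ via \eqref{bps}, re-indexing the resulting double sum over pairs $(\gamma,m)$ with $m\mid\gamma$ by pairs $(\alpha,m)$ through $\gamma=m\alpha$ (legitimate by Tonelli, all terms being nonnegative), and using $\|m\alpha\|=m\|\alpha\|$, one gets
\[\sum_{\gamma\in\Gamma}\|\gamma\|^p\,|\DT(\gamma)|\,e^{-R\|\gamma\|}\ \le\ \sum_{\alpha\in\Gamma}|\Omega(\alpha)|\,\|\alpha\|^p\sum_{m\ge 1}m^{\,p-2}\,e^{-Rm\|\alpha\|}.\]
This is the step where the restriction $p\in\{0,1,2\}$ is essential: then $m^{p-2}\le 1$, so the inner sum is dominated by the geometric series $\sum_{m\ge 1}e^{-Rm\|\alpha\|}=e^{-R\|\alpha\|}/(1-e^{-R\|\alpha\|})$, which, once $R\delta\ge\log 2$ (using $\|\alpha\|\ge\delta$), is at most $2e^{-R\|\alpha\|}$. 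Hence the left-hand side of \eqref{andrew} is at most $2\sum_{\alpha}|\Omega(\alpha)|\,\|\alpha\|^p\,e^{-R\|\alpha\|}$.

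Finally I would send this quantity to $0$ as $R\to\infty$. Writing $e^{-R\|\alpha\|}=e^{-R_0\|\alpha\|}\cdot e^{-(R-R_0)\|\alpha\|}$ and noting that, for $R-R_0>p/\delta$, the function $x\mapsto x^p e^{-(R-R_0)x}$ is decreasing on $[\delta,\infty)$ and hence bounded there by $\delta^p e^{-(R-R_0)\delta}$, I obtain
\[2\sum_{\alpha\in\Gamma}|\Omega(\alpha)|\,\|\alpha\|^p\,e^{-R\|\alpha\|}\ \le\ 2\,\delta^p\,e^{-(R-R_0)\delta}\sum_{\alpha\in\Gamma}|\Omega(\alpha)|\,e^{-R_0\|\alpha\|},\]
whose right-hand side tends to $0$. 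Choosing $R$ large enough that $R\delta\ge\log 2$, that $R-R_0>2/\delta$, and that $2\max(1,\delta,\delta^2)\,e^{-(R-R_0)\delta}\sum_{\alpha}|\Omega(\alpha)|e^{-R_0\|\alpha\|}<\epsilon$ then establishes \eqref{andrew} simultaneously for $p=0,1,2$. The whole argument is a bookkeeping estimate; the only points needing care are the interchange of the $\gamma$-sum with the divisor sum (valid by nonnegativity) and the observation that $p\le 2$ is precisely what makes $\sum_{m\ge 1}m^{p-2}e^{-Rm\|\alpha\|}$ controllable by a convergent geometric series — so I expect no genuine obstacle beyond getting these two points right.
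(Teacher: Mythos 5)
Your proof is correct and follows essentially the same route as the paper's: both re-index the divisor sum in $\DT(\gamma)$ as a double sum over $(\alpha,m)$, use $p\le 2$ to dominate $\sum_m m^{p-2}e^{-Rm\|\alpha\|}$ by a geometric series, absorb the polynomial factor $\|\alpha\|^p$ into part of the exponential (the paper via $x^p/(e^{2x}-1)<e^{-x}$ with $R=2S$, you via the monotonicity of $x^pe^{-cx}$ on $[\delta,\infty)$), and reduce to the convergence hypothesis \eqref{blockhead}. The differences are purely cosmetic bookkeeping.
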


\begin{proof}
 Clearly it is enough to prove the result for each $p\in \{0,1,2\}$ separately, so we can consider $p$ fixed. For  $x\gg 0$ there is an inequality
\[\frac{x^p}{e^{2x}-1}<e^{-x}.\]
By the discussion above, we can take $S>0$ so that the inequality \eqref{blockhead} holds with $R=S$ and $|Z(\gamma)|$ replaced with $\|\gamma\|$ in the exponential factor.
The   numbers $\|\gamma\|$  are bounded below so increasing $S$ if necessary we can  assume that
\[S^p\|\gamma\|^p \cdot (e^{2S\|\gamma\|}-1)^{-1}< e^{-S\|\gamma\|}<1,\]
for all  classes $\gamma\in \Gamma$.
Taking $R=2S$ and using the definition of DT invariants \eqref{bps} gives
\[\sum_{\gamma\in \Gamma}\|\gamma\|^p \cdot |\DT(\gamma)|\cdot e^{-R \|\gamma\|}=\sum_{\gamma\in \Gamma} \sum_{n\geq 1}  \frac{1}{n^2}\cdot \|n\gamma\|^p \cdot |\Omega(\gamma)|\cdot e^{-2nS\|\gamma\|}\]\[
\leq \sum_{\gamma\in \Gamma}  \|\gamma\|^p\cdot |\Omega(\gamma)|\cdot \frac{e^{-2S\|\gamma\|}}{1-e^{-2S\|\gamma\|}}<S^{-p}\cdot \sum_{\gamma\in \Gamma} |\Omega(\gamma)| \cdot e^{-S \|\gamma\|} <\infty.\]
Since the numbers $\|\gamma\|$ appearing in the exponential factor are bounded below, by increasing $R$ we can  ensure that this sum is smaller than any given $\epsilon>0$. 
\end{proof}

\subsection{Partially defined automorphisms}\label{tird}
Let $(\Gamma,Z,\Omega)$ be a  BPS structure, and fix a  convex sector $\Delta\subset \bC^*$. As above, we also fix  a norm $\|\cdot \|$ on the vector space $\Gamma\tensor_\bZ \bR$.
%In this section we introduce some elementary definitions which will allow us to talk about partially-defined automorphisms of the twisted torus $\bT$.  
In the next subsection we will be interested in controlling Hamiltonian flows of functions on the twisted torus $\bT$  of the form $\DT(\gamma)\cdot x_\gamma$, for those classes $\gamma\in \Gamma$ satisfying $Z(\gamma)\in \Delta$. Thus it makes sense to define,  
for each real number $R>0$, a subset 
\[V_\Delta(R)=\big\{\xi\in \bT: \gamma\in \Gamma \text{ active with }  Z(\gamma)\in \Delta\implies |x_\gamma(\xi)|<\exp(-R \|\gamma\|)\big\}.\]
It is not clear that the subset $V_\Delta(R)\subset \bT$ is open in general, so we define $U_\Delta(R)$ to be its interior.
We note the obvious implications
\[R_2\geq R_1 \implies U_{\Delta}(R_2)\subseteq U_{\Delta}(R_1), \quad \Delta_2\supseteq \Delta_1 \implies U_{\Delta_2}(R)\subseteq U_{\Delta_1}(R).\]
We think of the open subsets $U_\Delta(R)\subset \bT$ as forming a system of neighbourhoods of   the boundary in some fictitious partial compactification of $\bT$.

\begin{lemma}
\label{dyl}
The open subset $U_\Delta(R)=\interior V_\Delta(R)\subset \bT$ is non-empty.
\end{lemma}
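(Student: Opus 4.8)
The plan is to reduce the statement to an elementary fact about linear functionals on the lattice $\Gamma$. The twisted torus $\bT=\bT_-$ is nonempty and $\bT_+=\Hom_\bZ(\Gamma,\bC^*)$ acts freely transitively on it; fixing a base point $g_0\in\bT$ I would first introduce the map
\[\pi\colon\bT\lra\Hom_\bZ(\Gamma,\bR),\qquad \pi(g)\colon\gamma\longmapsto-\log|g(\gamma)|.\]
This is well defined because $\gamma\mapsto|g(\gamma)|$ is a group homomorphism for any $g\in\bT$ (the sign $(-1)^{\<\gamma_1,\gamma_2\>}$ disappears under $|\cdot|$), it is continuous, and it is surjective: given $a\in\Hom_\bZ(\Gamma,\bR)$ one can choose $f\in\bT_+$ with $|f(\gamma)|=e^{-a(\gamma)}/|g_0(\gamma)|$ (prescribe $f$ on a basis, then extend multiplicatively), and then $g=f\cdot g_0$ has $\pi(g)=a$. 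For $g$ with $\pi(g)=a$, the condition $g\in V_\Delta(R)$ is exactly the linear implication
\[\gamma\text{ active and }Z(\gamma)\in\Delta\ \Longrightarrow\ a(\gamma)>R\,\|\gamma\|.\]
Hence it suffices to find a nonempty open $W\subset\Hom_\bZ(\Gamma,\bR)$ all of whose elements satisfy this implication: then $\pi^{-1}(W)$ is open, nonempty (by surjectivity), and contained in $V_\Delta(R)$, so it is contained in $U_\Delta(R)=\interior V_\Delta(R)$.

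To produce $W$ I would combine the geometry of $\Delta$ with the support property. Since $\Delta$ is a convex sector it has angular width less than $\pi$, so there is a unit vector $u\in\bC$ and a constant $\delta>0$ with $\Re(\bar u z)\ge\delta|z|$ for all $z\in\Delta$. With $C>0$ the support-property constant, set $a_0(\gamma)=\tfrac{2R}{\delta C}\,\Re(\bar u Z(\gamma))$, which lies in $\Hom_\bZ(\Gamma,\bR)$ because $Z$ is additive. For any active $\gamma$ with $Z(\gamma)\in\Delta$ one then gets $a_0(\gamma)\ge\tfrac{2R}{\delta C}\,\delta|Z(\gamma)|>\tfrac{2R}{\delta C}\,\delta C\,\|\gamma\|=2R\|\gamma\|$, using $|Z(\gamma)|>C\|\gamma\|$ in the middle. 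Giving $\Hom_\bZ(\Gamma,\bR)$ the norm dual to $\|\cdot\|$, so that $|a(\gamma)|\le\|a\|\,\|\gamma\|$ for all $\gamma$, the open ball $W=\{a:\|a-a_0\|<R\}$ then does the job, since every $a\in W$ and every active $\gamma$ with $Z(\gamma)\in\Delta$ satisfy $a(\gamma)\ge a_0(\gamma)-R\|\gamma\|>R\|\gamma\|$.

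The only genuinely non-formal point is where "convex" enters: if $\gamma$ and $-\gamma$ are both active then $|g(\gamma)|\cdot|g(-\gamma)|=1$, so $V_\Delta(R)$ would be empty if $\Delta$ were a closed half-plane carrying both $\pm Z(\gamma)$ on its boundary. Thus the heart of the argument is precisely to use that a convex sector is bounded away from one of its supporting lines, which is what makes the supporting functional $\Re(\bar u\,\cdot\,)$ comparable to $|\,\cdot\,|$ on $\Delta$; everything else is the routine combination of this with the linear lower bound $|Z(\gamma)|>C\|\gamma\|$ coming from the support property, together with the observation that $V_\Delta(R)$ constrains only the absolute values $|g(\gamma)|$, which range freely over all homomorphisms $\Gamma\to\bR_{>0}$.
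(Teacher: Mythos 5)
Your proof is correct and is essentially the paper's argument: the paper likewise picks a direction $z\in\Delta$, uses acuteness to get $\Re(Z(\gamma)/z)>k|Z(\gamma)|$ on $\Delta$, combines this with the support property, and translates by $\exp(-SZ/z)$ for $S\gg0$ to land a whole compact neighbourhood inside $V_\Delta(R)$. Your packaging through the map $g\mapsto(\gamma\mapsto-\log|g(\gamma)|)$ into $\Hom_\bZ(\Gamma,\bR)$, with an explicit open ball there, is a slightly cleaner way of organising the same idea.
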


\begin{proof}
Take an element $z\in \Delta$.  Since $\Delta$ is acute we can find a constant $k>0$ such that
\[Z(\gamma)\in \Delta \implies \Re \big(Z(\gamma)/z\big) > k\cdot |Z(\gamma)|.\]
Take a point $\xi\in \bT$. Using the support property, we conclude that for any active class $\gamma\in \Gamma$ with $Z(\gamma)\in \Delta$, and all real numbers $S>0$, there are inequalities
\[|e^{-SZ(\gamma)/z}\cdot \xi(\gamma)| =e^{-S\Re (Z(\gamma)/z)}\cdot |\xi(\gamma)|<e^{-kS|Z(\gamma)|}\cdot |\xi(\gamma)|<e^{-cS\|\gamma\|}\cdot |\xi(\gamma)|,\]
for some universal constant $c>0$. Since the numbers $\|\gamma\|$ are bounded below, it follows that the element $e^{-SZ/z}\cdot \xi\in \bT$ lies in the subset $V_\Delta(R)$ for sufficiently large $S>0$. The  argument applies uniformly to all elements $\xi$ lying in a compact subset of $\bT$, so we conclude that $V_\Delta(R)$ has non-empty interior.
\end{proof}

Let us define a $\Delta$-map germ to be an equivalence class of holomorphic maps of the form $f\colon U_\Delta(R)\to \bT$, where two such maps $f_i\colon U_\Delta(R_i)\to \bT$  are considered to be equivalent if they agree on the intersection $U_\Delta(\max(R_1,R_2))$ of their domains. Thus a $\Delta$-map germ gives a partially-defined holomorphic map  $f\colon \bT\dashrightarrow \bT$ which is well-defined on  the open subset $U_\Delta(R)\subset \bT$ for all sufficiently large $R> 0$. A $\Delta$-map germ $f$ will be called bounded if for any $\epsilon>0$ one has
\begin{equation}\label{finger}f(U_\Delta(R+\epsilon))\subset U_\Delta(R)\end{equation}
for all sufficiently large $R>0$. 

It is clear  from \eqref{finger} that composition of such maps is well-defined and hence gives the set of  bounded $\Delta$-map germs the structure of a monoid. 
 We write  $\Aut_\Delta(\bT)$ for the group of invertible elements in this monoid, and call the elements of this group invertible bounded $\Delta$-map germs.  Elements of  $\Aut_\Delta(\bT)$ give  partially-defined holomorphic maps $f\colon \bT\dashrightarrow \bT$  with partially-defined inverses, %$f^{-1}\colon \bT\dashrightarrow \bT$
 and hence give a precise meaning to the idea of a partially-defined automorphism of $\bT$.

\subsection{BPS automorphisms}
%The main result of this section is  the following slightly stronger version of Proposition \ref{propy}. 
Let $(\Gamma,Z,\Omega)$ be a convergent BPS structure and fix  a  convex sector $\Delta\subset \bC^*$. Recall the definition of the formal series $\DT(\ell)$ associated to a ray  $\ell\subset \bC^*$ from Section \ref{raydiagram}. In this section we show that this series is absolutely convergent on suitable open subsets of $\bT$ and that the Hamiltonian flow of the resulting holomorphic function gives a partially-defined automorphism of $\bT$ in the precise sense described in the last subsection.

\begin{prop}
\label{prop}
For all sufficiently large $R>0$ the following statements hold.
For each ray $\ell\subset \Delta$, the power series $\DT(\ell)$ is absolutely convergent on $U_\Delta(R)$, and hence defines a holomorphic function \[\DT(\ell)\colon U_\Delta(R)\to \bC.\]
Moreover the time 1 Hamiltonian flow of this function gives a holomorphic map
\[\bS(\ell)\colon U_\Delta(R)\to \bT\]
which defines an invertible, bounded  $\Delta$-map  germ.% $\bS(\ell)\in \Aut_\Delta(\bT)$.
%\[\bS(\ell)\colon U_\Delta(R)\to \bT.\]
\end{prop}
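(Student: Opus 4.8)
The plan is to reduce the statement to the quantitative, norm-weighted bounds of Lemma~\ref{sven} together with standard facts about flows of holomorphic vector fields. Fix a basis $(\gamma_1,\dots,\gamma_n)$ of $\Gamma$, write $\gamma=\sum_j m_j(\gamma)\gamma_j$, and note $|m_j(\gamma)|\le C\|\gamma\|$ for a fixed constant $C$. \emph{Convergence and holomorphy of $\DT(\ell)$:} if $\DT(\gamma)\neq 0$ and $Z(\gamma)\in\ell$, then $\gamma=m\alpha$ with $\alpha$ active and $Z(\alpha)\in\ell\subset\Delta$; since $\<\alpha,\alpha\>=0$ we have $x_\gamma=x_\alpha^{\,m}$, so $|x_\gamma|<e^{-R\|\gamma\|}$ everywhere on $U_\Delta(R)$, and hence $\sup_K|x_\gamma|\le e^{-R\|\gamma\|}$ on any compact $K\subset U_\Delta(R)$. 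The case $p=0$ of Lemma~\ref{sven} then shows $\DT(\ell)=\sum_{Z(\gamma)\in\ell}\DT(\gamma)\,x_\gamma$ converges absolutely and uniformly on $K$, so $f:=\DT(\ell)$ is holomorphic on $U_\Delta(R)$. Differentiating the series term by term in the (multivalued) logarithmic coordinates $u_i=\log x_i$, in which $\partial_{u_j}x_\gamma=m_j(\gamma)x_\gamma$, and invoking the cases $p=1,2$ of Lemma~\ref{sven}, one sees that for $R$ large $f$ together with all its first and second $u$-derivatives is holomorphic and bounded on $U_\Delta(R)$ by a quantity $\kappa(R)$ with $\kappa(R)\to 0$ as $R\to\infty$; in particular $X_f$ is uniformly Lipschitz on $U_\Delta(R)$.

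\emph{The Hamiltonian vector field and its flow estimates.} In the coordinates $u_i$ the Poisson structure is constant, $\{u_i,u_j\}=\epsilon_{ij}=\<\gamma_i,\gamma_j\>$, so $X_f=\sum_{i,j}\epsilon_{ij}(\partial_{u_j}f)\,\partial_{u_i}$, equivalently $\dot x_i=x_i\cdot\sum_j\epsilon_{ij}\partial_{u_j}f$; as the right-hand side involves no logarithms this is a genuine holomorphic vector field on the open set $U_\Delta(R)\subset\bT$. Writing $\bS_s$ for its time-$s$ flow (so $\bS(\ell)=\bS_1$), along an integral curve one computes $\tfrac{d}{ds}\log x_\gamma=\sum_i\<\gamma_i,\gamma\>\,\partial_{u_i}f$, whence $|\tfrac{d}{ds}\log x_\gamma|\le C'\kappa(R)\,\|\gamma\|$; and for a fixed class $\beta$ the rate of change of $\log x_\beta$ is at most $C_\beta\kappa(R)$, so a curve confined to $U_\Delta(R)$ stays in a fixed compact subset of $\bT$, away from the coordinate divisors.

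\emph{Existence up to time $1$ and the germ properties.} Fix $R$ large, let $\xi\in U_\Delta(R)$, and consider the maximal integral curve $s\mapsto\bS_s\xi$. As long as it stays in $U_\Delta(R-1)$, the estimate above gives, for $|s|\le 1$ and every active $\gamma$ with $Z(\gamma)\in\Delta$,
\[|x_\gamma(\bS_s\xi)|<e^{-R\|\gamma\|}\,e^{C'\kappa(R-1)\|\gamma\|}=e^{-(R-C'\kappa(R-1))\|\gamma\|}\le e^{-(R-\frac12)\|\gamma\|},\]
once $R$ is large enough that $C'\kappa(R-1)<\tfrac12$, while $\bS_s\xi$ remains in a fixed compact subset of $\bT$. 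But a point with $|x_\gamma|\le e^{-(R-\frac12)\|\gamma\|}$ for all such $\gamma$ survives every sufficiently small torus perturbation (since $\log|x_\gamma|$ then moves by at most $O(\|\gamma\|)$ times the perturbation size), hence lies in the \emph{interior} $U_\Delta(R-1)$ of $V_\Delta(R-1)$. Therefore the curve can never reach $\partial U_\Delta(R-1)$ nor escape $\bT$, so by the standard extension theorem for ordinary differential equations it is defined on all of $[-1,1]$; holomorphic dependence on initial conditions makes the time-$1$ map $\bS(\ell)\colon U_\Delta(R)\to\bT$ holomorphic. Running the same argument from $U_\Delta(R+\epsilon)$ gives $\bS(\ell)\bigl(U_\Delta(R+\epsilon)\bigr)\subset U_\Delta(R)$ for all large $R$, so $\bS(\ell)$ is a bounded $\Delta$-map germ; since the identical estimates govern the time-$(-1)$ flow, $\bS(\ell)^{-1}:=\bS_{-1}$ is also a bounded $\Delta$-map germ, and $\bS(\ell)\circ\bS_{-1}=\bS_{-1}\circ\bS(\ell)=\id$ wherever both sides are defined. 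Hence $\bS(\ell)\in\Aut_\Delta(\bT)$.

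The delicate point is this last step: one must combine the norm-weighted a priori bounds of Lemma~\ref{sven} with a bootstrap/continuity argument to keep the flow inside the domain of $X_f$ up to time $1$, and — because $U_\Delta(R)$ is only the \emph{interior} of $V_\Delta(R)$ — one has to spend a little of the decay rate ($R\rightsquigarrow R-1$, and $R+\epsilon\rightsquigarrow R$) in order to land back in an honestly open set. This is precisely where the weighted form of Lemma~\ref{sven}, rather than mere finiteness of $\sum_\gamma|\Omega(\gamma)|e^{-R|Z(\gamma)|}$, is indispensable; the convergence and holomorphy of $\DT(\ell)$ itself, by contrast, are essentially immediate once the domains $U_\Delta(R)$ are available.
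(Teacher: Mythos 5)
Your proof is correct and follows essentially the same route as the paper: the weighted $p=0,1$ bounds of Lemma~\ref{sven} give absolute convergence of $\DT(\ell)$ and the estimate $|\tfrac{d}{ds}\log x_\beta|\le(\text{small})\cdot\|\beta\|$ along the Hamiltonian flow, which keeps trajectories inside the domain and inside a compact subset of $\bT$ up to time $1$, yielding boundedness and invertibility of the germ. Your explicit handling of the two points the paper passes over quickly --- that $|x_\gamma|<e^{-R\|\gamma\|}$ also for non-active multiples $\gamma=m\alpha$ via $x_\gamma=x_\alpha^{\,m}$, and that the flow lands in the \emph{interior} $U_\Delta(R)$ rather than merely in $V_\Delta(R)$ --- is a welcome refinement but not a different method.
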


\begin{proof}
Take assumptions as in the statement and choose $\epsilon>0$. For convenience we choose the norm $\|\cdot \|$ on $\Gamma_\bZ \tensor \bR$ so that for all $\beta,\gamma\in \Gamma$ there is an inequality
\begin{equation}
\label{blbly}|\<\beta,\gamma\>|< \|\beta\|\cdot \|\gamma\|.\end{equation}
Take $R>0$  satisfying the conclusion of Lemma \ref{sven}.  Thus for each ray $\ell\subset \Delta$ we can choose  $M(\ell)>0$ such that
\begin{equation}
\label{thatwas}\sum_{Z(\gamma)\in \ell} \|\gamma\|^p \cdot |\DT(\gamma)|\cdot e^{-R \|\gamma\|}<M(\ell),\end{equation}
for each $p\in \{0,1,2\}$, and such that
\begin{equation}
\label{flfl}\sum_{\ell\subset \Delta} M(\ell)<\epsilon.\end{equation}

Since the $\|\gamma\|$ are bounded below,  the $p=0$ case of \eqref{thatwas} immediately implies that for each point $\xi\in U_\Delta(R)$
\begin{equation}
\label{again}\sum_{Z(\gamma)\in \ell}| \DT(\gamma)|\cdot  |x_\gamma(\xi)|< \sum_{Z(\gamma)\in \ell}| \DT(\gamma)|\cdot  e^{-R\|\gamma\|}<\infty,\end{equation}
 which proves that $\DT(\ell)$ is absolutely convergent.
 
 The Hamiltonian flow of $\DT(\ell)$ is defined by the differential equation
\begin{equation}
\label{ca}\frac{dx_\beta}{dt} =\{\DT(\ell),x_\beta\}=x_\beta \cdot \sum_{Z(\gamma)\in \ell}\DT(\gamma)\cdot \<\gamma,\beta\>\cdot x_\gamma.\end{equation}
Using \eqref{blbly} it follows
that  providing the flow stays in $U_\Delta(R)$ it satisfies
\begin{equation}
\label{co2}\Big|\frac{d}{dt}\log x_\beta\Big|\leq  \sum_{Z(\gamma)\in \ell} |\DT(\gamma) |\cdot \|\beta\|\cdot  \|\gamma\| \cdot e^{-R \|\gamma\|} <M(\ell)\cdot  \|\beta\|.\end{equation}

Let us consider an integral curve $\phi(t)$ for this flow which starts at some point $\phi(0)\in U_\Delta(R+\epsilon)$. We claim that this curve extends to all times $t\in [0,1]$ and remains in $U_\Delta(R)$. It follows from this that the time 1 Hamiltonian flow $\bS(\ell)$ exists, and defines a holomorphic map
\[\bS(\ell)\colon U_\Delta(R+\epsilon)\to U_\Delta(R).\]
Since we can make $\epsilon>0$ arbitrarily small by increasing $R>0$, the resulting $\Delta$-map germ is  bounded. 
 Considering the opposite flow then shows that the map is invertible, which completes the proof of Proposition \ref{prop}.
 
 To prove the claim, let us define $t_0\in [0,1]$ to be the supremum of $s\in [0,1]$ such that the integral curve $\phi(t)$ can be extended to a map $\phi\colon [0,s]\to U_\Delta(R)$. We obtain an integral curve $\phi\colon [0,t_0)\to U_\Delta(R)$. Equation \eqref{co2} implies that\begin{equation}
\label{curry}e^{-M(\ell) \cdot \|\beta\|}\cdot  |x_\beta(\phi(0))|\leq |x_\beta(\phi(t))|\leq e^{M(\ell) \cdot \|\beta\|}\cdot  |x_\beta(\phi(0))|\end{equation}
for  $0\leq t<t_0$. Thus the flow stays in the compact subset of $\bT$ defined by these inequalities, and hence extends to a flow $\phi\colon [0,t_0]\to \bT$. Applying \eqref{curry} 
to the  active classes $\beta\in \Gamma$ then shows that  $\phi(t_0)\in U_\Delta(R)$. But now local existence of solutions to \eqref{ca} shows that we can extend the flow a little further, which contradicts  the definition of $t_0$ unless $t_0=1$. This proves the claim.\end{proof}

\subsection{Compositions of BPS automorphisms}
Let $(\Gamma,Z,\Omega)$ be a convergent BPS structure and fix  a  convex sector $\Delta\subset \bC^*$.  Fix $\epsilon>0$ and take $R>0$ as in Lemma \ref{sven}. Recall the  notion of the height of  a ray from Section \ref{raydiagram}. For each ray $\ell\subset \Delta$ define $M(\ell)>0$ as in the proof of Proposition \ref{prop} so that \eqref{thatwas} and \eqref{flfl} hold. Note that if a ray has height $> H$ then by the second inequality in \eqref{ein} we have  $\|\gamma\|>H/k_2$ for all nonzero terms in the sum \eqref{thatwas}. It follows that by increasing $R$ we can assume that for all $H>0$
\begin{equation}
\label{wa}\sum_{\height(\ell)> H} M(\ell)<\epsilon\cdot e^{-H}.\end{equation}

The proof of Proposition \ref{prop} shows that for each $S>R$ and each ray $\ell\subset \Delta$ the map $\bS(\ell)$ gives a well-defined embedding $U_\Delta(S+M(\ell))\to U_\Delta(S)$. 
For each $H>0$, the composition in anti-clockwise order of the $\Delta$-map  germs $\bS(\ell)$ corresponding to the finitely many rays $\ell\subset \Delta$ of height $\leq H$ therefore gives a well-defined map $U_\Delta(R+\epsilon)\to U_\Delta(R)$, and hence a  bounded $\Delta$-map germ
\[\bS(\Delta)_{\leq H}=\prod_{\height(\ell)\leq H} \bS(\ell)\in \Aut_\Delta(\bT).\]
The remaining task is to make sense of the limit of this map germ as $H\to \infty$. 
 
Suppose that a $\Delta$-map germ $f$ is well-defined on the open subset $U_\Delta(R)\subset \bT$. We define the $R$-norm of $f$ to be the infimum of the real numbers $K>0$ such that for all $\beta\in \Gamma$ and all $\xi\in U_\Delta(R)$
 \[e^{-K\cdot  \|\beta\|} \,|x_\beta(\xi)|\leq |x_\beta(f(\xi))|\leq e^{K\cdot \|\beta\|}\, |x_\beta(\xi)|.\]
 The proof of Proposition \ref{prop} shows that the $\Delta$-bounded map germ $\bS(\ell)$ is well-defined on $U_\Delta(R+\epsilon)$ and has norm $<M(\ell)$ there.
%We shall need the following simple completeness result.

\begin{lemma}
\label{popo}
Suppose given a sequence  of invertible $\Delta$-bounded map germs $f_n\in G_\Delta(\bT)$, all defined on a fixed 
 $U_\Delta(R)$,  and such that the $R$-norm $d(m,n)$ of the composite $f_n^{-1}\circ f_m$ goes to zero as $\min(m,n)\to \infty$. Then the holomorphic maps $f_n$ have a uniform limit, which is itself an invertible $\Delta$-bounded map germ, and hence defines a limiting element $f_\infty\in G_\Delta(\bT)$.
\end{lemma}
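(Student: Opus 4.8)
The plan is to combine a normal-families argument with a uniqueness argument for the limit. Fix once and for all a basis $\gamma_1,\dots,\gamma_n\subset\Gamma$. Since a holomorphic map into $\bT$ is determined by the twisted characters $x_{\gamma_1},\dots,x_{\gamma_n}$ of its target — all other $x_\gamma\circ f$ being recovered from these through $x_{\gamma_1}\cdot x_{\gamma_2}=(-1)^{\<\gamma_1,\gamma_2\>}x_{\gamma_1+\gamma_2}$ — it is enough to produce a holomorphic map $f_\infty$ on a slightly shrunk $U_\Delta(R')$ such that $x_{\gamma_i}\circ f_m\to x_{\gamma_i}\circ f_\infty$ uniformly on compacta, to check the limit is nowhere vanishing, and to verify $f_\infty\in\Aut_\Delta(\bT)$.

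First I would establish uniform bounds. The logarithmic distortion $\beta\mapsto\log|x_\beta(f(\xi))|-\log|x_\beta(\xi)|$ is subadditive under composition of map germs once one has arranged, using boundedness, that the relevant composites are defined after shrinking $U_\Delta(R)$ slightly. Choosing $N$ so large that $d(m,n)<1$ for all $m,n\geq N$ and writing $f_m=f_N\circ(f_N^{-1}\circ f_m)$ then bounds the $R'$-norm of every $f_m$ by a single constant $C$ (for the finitely many $m<N$ one just increases $C$), so that
\[ e^{-C\|\beta\|}\,|x_\beta(\xi)|\leq|x_\beta(f_m(\xi))|\leq e^{C\|\beta\|}\,|x_\beta(\xi)|,\qquad \xi\in U_\Delta(R'),\ m\geq 1. \]
Hence on every compact $K\subset U_\Delta(R')$ the functions $x_{\gamma_i}\circ f_m$ and their reciprocals are uniformly bounded, so by Montel's theorem the family $\{f_m\}$ is normal: every subsequence has a further subsequence converging uniformly on compacta to a holomorphic map into $\bT$ (the two-sided bounds pass to the limit, so the limit genuinely lands in $\bT$, not in a boundary degeneration).

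It remains to see that the subsequential limit is unique and to identify it. For uniqueness the key point is that the germs $g_{m,n}:=f_n^{-1}\circ f_m$ tend to the identity as $\min(m,n)\to\infty$: their $R$-norms tend to $0$ by hypothesis, and since each $g_{m,n}$ is a genuine holomorphic self-germ of $U_\Delta(R)$ this upgrades to $x_\beta\circ g_{m,n}\to x_\beta$ uniformly on compacta — here one uses that $U_\Delta(R)$ is connected, so that a nowhere-zero holomorphic function of modulus close to $1$ is close to a unimodular constant, together with the boundedness of $g_{m,n}$, which lets one pin that constant down to $1$ by testing far inside $U_\Delta(R)$, where $x_\gamma\to 0$ for the active classes. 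Given this, two subsequential limits $f_{m_k}\to f$ and $f_{m_l}\to\tilde f$ satisfy $\tilde f=f\circ(\lim_{k,l}g_{m_k,m_l})=f$, so the whole sequence converges uniformly on compacta to a single bounded $\Delta$-map germ $f_\infty$. For invertibility, the inverses $f_m^{-1}$ have $R'$-norms equal to those of the $f_m$, hence are uniformly bounded; applying Montel again we may pass to a common subsequence along which $f_{m_k}\to f_\infty$ and $f_{m_k}^{-1}\to g_\infty$ uniformly on compacta, and then letting $k\to\infty$ in $f_{m_k}\circ f_{m_k}^{-1}=\id=f_{m_k}^{-1}\circ f_{m_k}$ (legitimate since the uniform bounds confine the relevant points to a fixed compact region) gives $f_\infty\circ g_\infty=g_\infty\circ f_\infty=\id$. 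Thus $f_\infty\in\Aut_\Delta(\bT)$.

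I expect the uniqueness step to be the real obstacle. The $R$-norm controls only moduli of twisted characters, whereas a uniform limit of the maps $f_m$ requires control of their arguments as well; bridging this gap rests on holomorphy (to reduce the ambiguity to a single unimodular constant on each connected component of $U_\Delta(R)$) and on boundedness of the germs (to eliminate that constant by evaluating near the fictitious partial boundary of $\bT$). This is precisely where the extra structure of the sequences of interest helps — the $f_m$ are finite clockwise products of the $\bS(\ell)$, which by Remark \ref{const} are uniformly close to the identity, in full complex norm, near that boundary — so that $g_{m,n}$ really is close to the identity as a map germ and not merely in modulus; the remaining bookkeeping, in particular keeping track of which $U_\Delta(R')$ a given composite is defined on and how the constants degrade, is routine.
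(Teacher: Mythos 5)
Your normal-families setup (the uniform two-sided bounds on $|x_{\gamma_i}\circ f_m|$ and their reciprocals, Montel, and passing the multiplicative relations among twisted characters to the limit) is fine, but the uniqueness step --- the one you yourself flag as the real obstacle --- does not go through, and the gap is fatal for the lemma in the generality stated. The $R$-norm is blind to rotations: if $u\in\bT_+$ satisfies $|u(\gamma)|=1$ for every $\gamma\in\Gamma$, then the translation $g(\xi)=u\cdot\xi$ is an invertible bounded $\Delta$-map germ of $R$-norm zero, and the ratio $x_\beta(g(\xi))/x_\beta(\xi)=u(\beta)$ is a unimodular constant that cannot be ``pinned down to $1$ by testing far inside $U_\Delta(R)$'': both $x_\beta(g(\xi))$ and $x_\beta(\xi)$ tend to $0$ there at exactly the same rate, so no evaluation detects $u(\beta)$. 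Taking $f_n=u_n\cdot(-)$ for a non-convergent sequence of such unimodular elements $u_n$ gives a sequence satisfying every hypothesis of the lemma, with $d(m,n)\equiv 0$, which does not converge. So your claim that $g_{m,n}\to\id$ uniformly on compacta is not a consequence of the hypotheses, and your closing appeal to the special structure of the $\bS(\ell)$ is in effect an admission that you are proving a different, application-specific statement. (The paper's own two-line proof has exactly the same gap: it deduces convergence of the complex-valued functions $x_\beta\circ f_n$ from control of ratios of their moduli.)

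The honest repair is not more function theory but a stronger norm. What the flow estimate in the proof of Proposition \ref{prop} actually controls is the full complex quantity $\log x_\beta(f(\xi))-\log x_\beta(\xi)$ --- this is the content of Remark \ref{const}, where the bound is on $\|\log \bS(\ell)(\xi)-\log(\xi)\|$ and not merely on moduli --- and the same is true of the conjugated germs $X(H_1,H_2)$ appearing in Proposition \ref{prop2}. If one defines the $R$-norm using this complex logarithmic distortion, the hypotheses are still verified in the only place the lemma is used, the functions $x_\beta\circ f_n$ become genuinely uniformly Cauchy on $U_\Delta(R)$, and the conclusion follows in two lines with no need for Montel, Borel--Carath\'eodory, or a separate uniqueness argument. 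You correctly located the soft spot; the fix is to change the metric, not to try to recover the argument of $x_\beta\circ g_{m,n}$ from its modulus.
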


\begin{proof}
By definition, for any $\xi\in U_\Delta(R)$ we have
\[e^{-d\cdot  \|\beta\|} \,|x_\beta(f_m(\xi))|\leq |x_\beta(f_n(\xi))|\leq e^{d\cdot \|\beta\|}\, |x_\beta(f_m(\xi))|,\]
where $d=d(m,n)$. Thus the $x_\beta(f_n(\xi))$ converge uniformly on compact subsets.
\end{proof}

\begin{prop}
\label{prop2}
The $\Delta$-map germs $\bS(\Delta)_{\leq H}$ have a well-defined   limit  \[\bS(\Delta)=\lim_{H\to \infty} \bS(\Delta)_{\leq H} \in \Aut_\Delta(\bT).\]
\end{prop}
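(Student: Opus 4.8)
The plan is to apply the preceding completeness lemma to the sequence of germs $\bS(\Delta)_{\leq H}$. Note first that, since only finitely many active rays have height below any given bound, $\bS(\Delta)_{\leq H}$ is locally constant in $H$, so it suffices to produce the limit along an integer sequence $H\to\infty$.

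Enlarging $R$ if necessary we may assume $\sum_{\ell\subset\Delta}M(\ell)<\epsilon$ as well as $\sum_{\height(\ell)>H}M(\ell)<e^{-H}\epsilon$ for all $H>0$. By the proof of Proposition \ref{prop} each $\bS(\ell)$, and equally its inverse $\bS(\ell)^{-1}$ (the time $-1$ flow of the same Hamiltonian), maps $U_\Delta(S+M(\ell))$ into $U_\Delta(S)$ for every $S\geq R$; since the $M(\ell)$ have total mass below $\epsilon$, any finite composite of the maps $\bS(\ell)^{\pm1}$ with $\ell\subset\Delta$ maps $U_\Delta(S+\epsilon)$ into $U_\Delta(S)$ for $S\geq R$. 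In particular every $\bS(\Delta)_{\leq H}$ and every $\bS(\Delta)_{\leq H}^{-1}$ is defined on $U_\Delta(R+2\epsilon)$, and the composite $\bS(\Delta)_{\leq H'}^{-1}\circ\bS(\Delta)_{\leq H}$ is also defined there, since the first factor carries $U_\Delta(R+2\epsilon)$ into $U_\Delta(R+\epsilon)$.

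The key estimate is an interpolation bound. By Remark \ref{const}, in the coordinates $\log x_\beta$ each map $\bS(\ell)^{\pm1}$ differs from the identity by a map whose value and whose derivative have norm at most $M(\ell)$; in particular $\bS(\ell)^{\pm1}$ is $(1+M(\ell))$-Lipschitz in these coordinates, and hence every finite composite of such maps (each ray used at most twice) is $L$-Lipschitz for the uniform constant $L=e^{2\epsilon}$. Fix $H<H'$ and let $p_1,\dots,p_r$ be the rays with $H<\height(p_i)\leq H'$, listed in clockwise order among all rays of height $\leq H'$. Passing from $\bS(\Delta)_{\leq H}$ to $\bS(\Delta)_{\leq H'}$ by reinstating the factors $\bS(p_i)$ one at a time replaces a clockwise product $A\circ B$ by $A\circ\bS(p_i)\circ B$; since $\bS(p_i)$ moves every point of its domain by at most $M(p_i)$ in the coordinates $\log x_\beta$, and $A$ is $L$-Lipschitz there, this step alters the composite by at most $L\,M(p_i)$, uniformly on $U_\Delta(R+2\epsilon)$. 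Every intermediate product is a composite of maps $\bS(\ell)^{\pm1}$ with total displacement below $\epsilon$, so it is defined on $U_\Delta(R+2\epsilon)$ throughout. Summing over $i$,
\[\sup_{\xi\in U_\Delta(R+2\epsilon)}\big\|\log\bS(\Delta)_{\leq H}(\xi)-\log\bS(\Delta)_{\leq H'}(\xi)\big\|\;\leq\;L\sum_{H<\height(\ell)\leq H'}M(\ell)\;\leq\;L\,e^{-H}\epsilon,\]
and composing with the $L$-Lipschitz germ $\bS(\Delta)_{\leq H'}^{-1}$ shows that the $(R+2\epsilon)$-norm of $\bS(\Delta)_{\leq H'}^{-1}\circ\bS(\Delta)_{\leq H}$ is at most $L^2e^{-H}\epsilon$, which tends to $0$ as $H\to\infty$. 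By the completeness lemma the maps $\bS(\Delta)_{\leq H}$ then converge uniformly to a holomorphic limit that is again an invertible bounded $\Delta$-map germ $\bS(\Delta)\in\Aut_\Delta(\bT)$; replacing $\epsilon$, $R$ or the constants $M(\ell)$ by other admissible choices produces a germ agreeing with this one on the overlap of their domains, so $\bS(\Delta)$ is well defined.

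I expect the main difficulty to be precisely this interpolation step: one must combine the zeroth-order displacement bound and the first-order (derivative) bound of Remark \ref{const}, and keep careful track of domains so that every sub-product obtained by inserting or deleting a factor remains defined on the fixed neighbourhood $U_\Delta(R+2\epsilon)$ — which works only because the chosen constants $M(\ell)$ were arranged to have total mass strictly less than $\epsilon$.
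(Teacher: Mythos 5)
Your proposal is correct and follows essentially the same route as the paper: reinstate the factors $\bS(\ell)$ for rays of height between $H$ and $H'$ one at a time, bound each insertion by $M(\ell)$ times a uniform Lipschitz constant supplied by the derivative estimate of Remark \ref{const}, sum the contributions using \eqref{wa}, and invoke the completeness lemma. The only (cosmetic) difference is that the paper packages the one-step perturbation as the conjugate germ $ABA^{-1}$ and bounds its norm directly, whereas you estimate the sup-difference of the two composites in the $\log x_\beta$ coordinates and compose with the ($L$-Lipschitz) inverse only at the end.
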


\begin{proof}
Let us consider two finite compositions $P_1$ and $P_2$ of the maps $\bS(\ell)$ with $\ell\subset \Delta$ as above, which  differ by the insertion of one extra term. Thus the two products can be written in the form
$P_1=AC$ and $P_2=ABC$ with \[ A=\prod_i \bS(\ell_i), \quad B=\bS(\ell_j), \quad C=\prod_k \bS(\ell_k).\]
The  composite map germ
 $X=P_2 P_1^{-1}$ is well-defined on $U_\Delta(R+2\epsilon)$. We claim that it  has norm $<2M(\ell_j)$ there.
 Given  heights $H_2>H_1>0$ we can apply the claim repeatedly and use \eqref{wa} to deduce that
\[\bS(\Delta)_{\leq H_2} = \bS(\Delta)_{\leq H_1}\circ X(H_1,H_2),\]
where $X(H_1,H_2)$ has norm at most  $2\epsilon\cdot e^{-H_1}$. The result then follows from  Lemma \ref{popo}.

To prove the claim, note first that $X=ABA^{-1}$ is also the time 1 flow of a vector field on $\bT$, namely the  Hamiltonian vector field of the function $\DT(\ell_j)\circ A^{-1}$ pushed forward by the map $A$. We can therefore apply the same argument as in Proposition \ref{prop} provided we can bound the norm of this vector field as in  \eqref{co2}. To do this it is enough to bound the norm of the difference between the derivative of the map $A$ and the identity at points of $U_\Delta(R)$. This in turn is achieved by the same argument as Proposition \ref{prop} using the $p=2$ case of inequality \eqref{thatwas}. We leave the details to the reader.\end{proof}

\subsection{Birational transformations}
\label{birat}

The following example was first observed by Kontsevich and Soibelman \cite[Section 2.5]{KS1}.
Suppose that a ray $\ell$ contains a single active class $\gamma$, and that moreover $\Omega(\gamma)=1$.  The series $\DT(\ell)$ is then the dilogarithm
\[\DT(\ell)=-\sum_{n\geq 1} \frac{x_{n\gamma}}{n^2}=-\sum_{n\geq 1} \frac{x_{\gamma}^n}{n^2}.\]
This converges absolutely for $|x_\gamma|<1$ and hence defines a holomorphic function on the open subset \[U(R)=\{f\in \bT: |f(\gamma)|<e^{-R}\}\subset \bT,\]
for any $R>0$. The argument of Proposition \ref{prop} shows that the associated time 1 Hamiltonian flow $\bS(\ell)$ is well-defined on $U(R)$, and indeed we can directly compute
\[\bS(\ell)^*(x_\beta)=\exp \Big\{-\sum_{n\geq 1} \frac{x_{\gamma}^n}{n^2}, -\Big\} (x_\beta)=x_\beta\cdot  \exp\Big(  -\sum_{n\geq 1} \frac{x_{\gamma}^n}{n^2}\cdot \<n\gamma,\beta\>\Big)\]\[
= x_\beta\cdot \exp\Big( \<\gamma,\beta\>  \log(1-x_\gamma)\Big)=x_\beta\cdot (1-x_\gamma)^{\<\gamma,\beta\>}.\]
Thus the map $\bS(\ell)$  extends holomorphically to the Zariski open subset of $\bT$ which is the complement of the divisor $x_\gamma=1$. It therefore defines a birational automorphism of $\bT$.

\begin{prop}
\label{barney}
Suppose that $(\Gamma,Z,\Omega)$ is a  generic, integral and ray-finite BPS structure. Then for any ray $\ell\subset \bC^*$ the   BPS automorphism $\bS(\ell)$ extends to a birational automorphism of $\bT$, whose action on twisted characters is given by 
\begin{equation}\bS(\ell)^*(x_\beta)=x_\beta\cdot \prod_{Z(\gamma)\in \ell}(1-x_\gamma)^{\,\Omega(\gamma)\cdot\<\gamma,\beta\>}.\end{equation}
\end{prop}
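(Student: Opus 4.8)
The plan is to reduce everything to the single-class dilogarithm computation carried out immediately before the statement, invoking the three hypotheses in turn: ray-finiteness to make the relevant product finite, genericity to split the Hamiltonian flow into commuting pieces, and integrality to guarantee that the resulting transformation is rational.

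First I would rewrite the series $\DT(\ell)$ of \eqref{hamiltonian} in terms of dilogarithms. By ray-finiteness there are only finitely many active classes $\alpha_1,\dots,\alpha_r$ with $Z(\alpha_i)\in\ell$, and every $\gamma$ with $Z(\gamma)\in\ell$ and $\DT(\gamma)\neq 0$ is a positive integer multiple of one of them. Interchanging the order of summation in the definition \eqref{bps} of $\DT(\gamma)$ then gives
\[\DT(\ell)=\sum_{i=1}^r \Omega(\alpha_i)\cdot\sum_{n\geq 1}\frac{x_{\alpha_i}^{\,n}}{n^2},\]
where I use that $x_{n\alpha}=x_\alpha^n$ in $\bC[\bT]$, which holds because $\<\alpha,\alpha\>=0$. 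Each inner sum is a dilogarithm $\Li_2(x_{\alpha_i})$, absolutely convergent on $\{f\in\bT:|f(\alpha_i)|<e^{-R}\}$ for every $R>0$, and by the same direct exponentiation computation as in the paragraph preceding the statement (now carrying the extra constant $\Omega(\alpha_i)$) its time-$1$ Hamiltonian flow extends to the transformation $x_\beta\mapsto x_\beta\cdot(1-x_{\alpha_i})^{\Omega(\alpha_i)\<\beta,\alpha_i\>}$ of $\bT$, which is birational since $\Omega(\alpha_i)\in\bZ$ makes each exponent an integer.

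Next I would invoke genericity. Since $\bR_{>0}\cdot Z(\alpha_i)=\ell=\bR_{>0}\cdot Z(\alpha_j)$ for all $i,j$, genericity forces $\<\alpha_i,\alpha_j\>=0$, so $\{x_{\alpha_i},x_{\alpha_j}\}=0$ by \eqref{poisson2} and the Hamiltonian vector fields of the summands $\Omega(\alpha_i)\Li_2(x_{\alpha_i})$ commute pairwise. Hence the time-$1$ flow of $\DT(\ell)$ is the composition, in any order, of the time-$1$ flows of the individual summands; on $U_\Delta(R)$ for $R\gg 0$ this composition agrees with the analytic map $\bS(\ell)$ of Proposition \ref{prop}, and globally it is the composition of the birational maps found above. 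Composing them yields
\[\bS(\ell)^*(x_\beta)=x_\beta\cdot\prod_{i=1}^r(1-x_{\alpha_i})^{\Omega(\alpha_i)\<\beta,\alpha_i\>}=x_\beta\cdot\prod_{Z(\gamma)\in\ell}(1-x_\gamma)^{\,\Omega(\gamma)\<\beta,\gamma\>},\]
the last equality because all omitted factors have exponent zero; every exponent is an integer, so this defines a birational automorphism of $\bT$.

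The reindexing of $\DT(\ell)$ and the sign bookkeeping for twisted characters are routine. The one point needing care — and where the hypotheses genuinely interact — is the justification that commuting Hamiltonians produce the composition of the corresponding time-$1$ flows, and that this composition of birational maps coincides, on the open sets $U_\Delta(R)$, with the analytically defined flow $\bS(\ell)$ of Proposition \ref{prop}; this is what bridges the convergent, flow-theoretic definition and the closed birational formula. I expect that step, modest as it is, to be the main obstacle.
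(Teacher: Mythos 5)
Your proposal is correct and follows essentially the same route as the paper: the paper likewise uses genericity to commute the Hamiltonian flows of the characters on the ray, exponentiates the Poisson derivation of $\DT(\ell)$ written as the double sum $\sum_{Z(\gamma)\in\ell}\sum_{n\geq 1}\Omega(\gamma)x_\gamma^n/n^2$ exactly as in the single-class dilogarithm example, and invokes integrality of the $\Omega(\gamma)$ for birationality. Your presentation as a composition of the individual single-class flows is just a repackaging of the paper's one-line computation of $\exp\{\DT(\ell),-\}(x_\beta)$.
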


\begin{proof}
The argument of Proposition \ref{prop} shows that the automorphism $\bS(\ell)$ is well-defined on a suitable open subset of $\bT$. Under the generic assumption  
 the Hamiltonian flows of the characters appearing in the function $\DT(\ell)$ commute, so we can compute as before
\[\bS(\ell)^*(x_\beta)=\exp \Big\{-\sum_{Z(\gamma)\in \ell} \sum_{n\geq 1} \Omega(\gamma)\cdot \frac{x_{\gamma}^n}{n^2}, -\Big\} (x_\beta)=x_\beta\cdot  \exp\Big( - \sum_{n\geq 1}\sum_{Z(\gamma)\in \ell}  \Omega(\gamma)\cdot \frac{x_{\gamma}^n}{n^2}\cdot \<n\gamma,\beta\>\Big)\]\[
= x_\beta\cdot \exp\Big(\sum_{Z(\gamma)\in \ell}  \<\gamma,\beta\>  \Omega(\gamma) \log(1-x_\gamma)\Big)=x_\beta\cdot \prod_{Z(\gamma)\in \ell}(1-x_\gamma)^{\,\Omega(\gamma)\cdot\<\gamma,\beta\>}.\]
When the invariants $\Omega(\gamma)$ are integers  this is clearly a birational automorphism of $\bT$. \end{proof}

\end{appendix}

\end{document}